\newcolumntype{P}[1]{>{\centering\arraybackslash}p{#1}}
\newtheorem{theorem}{Theorem}[section]
\newtheorem{lemma}[theorem]{Lemma}
\newtheorem{proposition}[theorem]{Proposition}
\newtheorem{corollary}[theorem]{Corollary}
\theoremstyle{definition}
\newtheorem{remark}[theorem]{Remark}
\newtheorem{example}[theorem]{Example}
\newtheorem{definition}[theorem]{Definition}
\DeclareMathOperator{\Conv}{Conv}
\DeclareMathOperator{\id}{id}
\def\l@subsection{\@tocline{2}{0pt}{2.5pc}{5pc}{}}
\author{Sandra Di Rocco}
\address{{\small Department of Mathematics, KTH Royal Institute of Technology, SE-100 44 Stockholm, Sweden}}
\email{dirocco@kth.se}
\author{Luca Schaffler}
\address{{\small Dipartimento di Matematica e Fisica, Universit\`a degli Studi Roma Tre, Largo San Leonardo Murialdo 1, 00146, Roma, Italy}}
\email{luca.schaffler@uniroma3.it}
\subjclass{14J10, 14D06, 14M25, 52B20}
\keywords{Toric variety, point configuration, degeneration, moduli space, compactification.}
\title[Families of pointed toric varieties and degenerations]{Families of pointed toric varieties and degenerations}
\begin{document}

\maketitle

\begin{abstract}
The Losev--Manin moduli space parametrizes pointed chains of projective lines. In this paper we study a possible generalization to families of pointed degenerate toric varieties. Geometric properties of these families, such as flatness and reducedness of the fibers, are explored via a combinatorial characterization. We show that such families are described by a specific type of polytope fibration which generalizes the twisted Cayley sums, originally introduced to characterize elementary extremal contractions of fiber type associated to projective $\mathbb{Q}$-factorial toric varieties with positive dual defect. The case of a one-dimensional simplex can be viewed as an alternative construction of the permutohedra.
\end{abstract}


\section{Introduction}
In this paper we introduce and study a toric generalization of the Losev--Manin moduli space building upon \cite{AM16} and \cite{ST21}. The Losev--Manin moduli space $\overline{\mathrm{LM}}_{m+2}$ is a projective, smooth, fine moduli space of dimension $m-1$ parametrizing chains of projective lines marked with $m$ smooth `light' points, which are allowed to collide, and two smooth `heavy' points, which cannot collide with any other marked point \cite{LM00}. This is a central example of compact moduli space in algebraic geometry. $\overline{\mathrm{LM}}_{m+2}$ is isomorphic to a specific Hassett's moduli space of weighted stable curves, which can be described as an explicit toric blow up of $\mathbb{P}^{m-1}$ (see \cite[\S\,6.4]{Has03}). Moreover, the moduli space of stable $n$-pointed rational curves $\overline{\mathrm{M}}_{0,m+2}$ is a blow up of $\overline{\mathrm{LM}}_{m+2}$, and this birational relation played a central role in \cite{CT15} to prove that $\overline{\mathrm{M}}_{0,m+2}$ is in general not a Mori Dream Space.

Using tools from the work of Kapranov--Sturmfels--Zelevinski \cite{KSZ91}, a generalization of $\overline{\mathrm{LM}}_{0,m+2}$ to point configurations in $\mathbb{P}^2$ was used in \cite{ST21} to describe a novel geometric and modular compactification of the moduli space of $n$ points in $\mathbb{P}^2$. Motivated by this, in the current paper we consider the general perspective of point configurations in a projective toric variety $X_P$ associated to an arbitrary lattice polytope $P$.


\subsection{Families of interest}

Let $H$ be the dense open subtorus of $X_P$, diagonally embedded in $X_P^m$. The Chow quotient $X_P^m/\!/H$ can be interpreted as a compactification of the moduli space of $m$ light and $k$ heavy points in $X_P$ up to $H$-action, where $k$ is the number of torus fixed points in $X_P$. The normalization of $X_P^m/\!/H$ is isomorphic to the toric variety $X_{\mathcal{Q}(P,m)}$ associated to the \emph{quotient fan} $\mathcal{Q}(P,m)$ (see \S\,\ref{construction-quotient-fan}). By work of Billera--Sturmfels \cite{BS92}, $X_{\mathcal{Q}(P,m)}$ is a projective toric variety associated to the normal fan $\mathcal{Q}(P,m)$ to a polytope $Q(P,m)$ called the \emph{fiber polytope} ($Q(P,m)$ corresponds to the polytope defined by the canonical ample polarization inherited by the Chow quotient $X_P^m/\!/H$). For example, if $P$ is the $1$-dimensional simplex $\Delta_1$, then $Q(\Delta_1,m)$ is the $(m-1)$-dimensional permutohedron and $X_{\mathcal{Q}(\Delta_1,m)}\cong\overline{\mathrm{LM}}_{0,m+2}$. We introduce toric families of pointed, degenerate toric varieties over $X_{\mathcal{Q}(P,m)}$ for an arbitrary lattice polytope $P$. These varieties are governed by projective toric families $f\colon X_{\mathcal{R}(P,m)}\rightarrow X_{\mathcal{Q}(P,m)}$ which resolve the indeterminacies of the rational map $X_P^m\dashrightarrow X_{\mathcal{Q}(P,m)}$ (see \S\,\ref{sec-toric-family-R(P,m)}). The fiber of $f$ over a point in the dense torus of $X_{\mathcal{Q}(P,m)}$ is an appropriate translate by $H$-action of the diagonal $X_P\subseteq X_P^m$. There is also a morphism  $X_{\mathcal{Q}(P,m+1)}\rightarrow X_{\mathcal{Q}(P,m)}$ (see Proposition~\ref{forgetful-maps}), although in general $X_{\mathcal{R}(P,m)}$ is not isomorphic to $X_{\mathcal{Q}(P,m+1)}$ (see Remark~\ref{non-recursive-family-in-general}).


\subsection{Relation with toric stacks}
\label{rel-with-tor-stacks}

In a more general setting, let $V$ be a projective toric variety and $H$ a subtorus of the defining dense torus of $V$. Following \cite{AM16,Mol14}, one can define a toric stack $[V/\!/H]$ (see also \cite{GS15a,GS15b}), which is a Deligne--Mumford stack with underlying coarse moduli space the normalization of $V/\!/H$. The toric stack $[V/\!/H]$ carries a universal family $\mathcal{U}\rightarrow[V/\!/H]$ which is flat with reduced fibers \cite[Proposition~3.4]{AM16}. Ascher and Molcho proved that $[V/\!/H]$ is isomorphic to the Alexeev--Brion moduli stack parametrizing finite torus-equivariant maps from stable toric varieties to $V$ \cite{AB06}, endowed with the logarithmic structure given in \cite[\S\,4.2]{AM16}.

When $V=X_P^m$, the toric variety $X_{\mathcal{R}(P,m)}$ is the coarse moduli space underlying the toric stack $\mathcal{U}$. This is because the fan $\mathcal{R}(P,m)$ is obtained from the data defining $\mathcal{U}$ forgetting the finitely generated and saturated submonoid corresponding to each cone (see \cite[Definition~2.1 and Remark~2.6]{AM16}). In this sense, the current paper analyzes the geometry of the coarse moduli space morphism underlying $\mathcal{U}\rightarrow[X_P^m/\!/H]$.


\subsection{Main results}

The definition of $\mathcal{R}(P,m)$ implies the existence of a toric embedding $X_{\mathcal{R}(P,m)}\hookrightarrow X_P^m\times X_{\mathcal{Q}(P,m)}$ which induces a polarization on $X_{\mathcal{R}(P,m)}$. Let $R(P,m)$ be the lattice polytope with normal fan $\mathcal{R}(P,m)$ corresponding to this polarization. We find that the polytopes $R(P,m)$ can be described as a generalization of the class of \emph{$\pi$-twisted Cayley sums} introduced by Casagrande--Di Rocco in \cite{CDR08}.

\begin{definition}[{\cite[Definition~3.5]{CDR08}}]
\label{def-pi-twisted-Cayley-sum}
Let $\pi\colon M\rightarrow \Lambda$ be a surjective map of lattices and denote by $\pi_\mathbb{R}$ its extension $M_\mathbb{R}=M\otimes\mathbb{R}\rightarrow\Lambda\otimes\mathbb{R}=\Lambda_\mathbb{R}$. Let $R_1,\ldots,R_\ell\subseteq M_\mathbb{R}$ be lattice polytopes which are \emph{normally equivalent} (see Definition~\ref{def-strictly-combinatorially-iso}). Assume that $\pi_\mathbb{R}(R_i)=v_i\in\Lambda$ are distinct and are the vertices of the lattice polytope $F=\Conv(v_1,\ldots,v_\ell)$. Then $R=\Conv(R_1,\ldots,R_\ell)$ is called \emph{$\pi$-twisted Cayley sum} of $R_1,\ldots,R_\ell$.
\end{definition}

If $Y$ is the toric variety associated to the normally equivalent polytopes $R_1,\ldots,R_\ell$ in the definition, then we have a toric morphism $X_R\rightarrow Y$ with fibers isomorphic to $X_F$ \cite[Lemma~3.6]{CDR08}. This contruction describes elementary extremal contractions of fiber type for a projective $\mathbb{Q}$-factorial toric variety $X$ whose dual variety has codimension strictly larger than one.

The toric morphisms $X_R\rightarrow Y$ arising from $\pi$-twisted Cayley sums do not describe $X_{\mathcal{R}(P,m)}\rightarrow X_{\mathcal{Q}(P,m)}$ as the latter may have reducible fibers. By not requiring that the polytopes $R_i$ are only mapped to vertices we obtain an enlarged class of polytopes which we call \emph{generalized} $\pi$-twisted Cayley sums (see Definition~\ref{def-gen-pi-twisted-Cayley-sum} and Figure~\ref{cut3x3x3cube} for an example). We first prove the following result.

\begin{theorem}
\label{morphism-associated-to-twisted-Cayley-sum}
Let $R=R(R_1,\ldots,R_k,\pi)$ be a generalized $\pi$-twisted Cayley sum and let $F=\pi_\mathbb{R}(R)$. Let $Y$ be the toric variety associated to the normally equivalent lattice polytopes $R_i$. Then there exists a toric morphism $X_R\rightarrow Y$ with generic fiber $X_F$.
\end{theorem}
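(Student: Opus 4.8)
The plan is to mimic the structure of the original proof of \cite[Lemma~3.6]{CDR08}, replacing the role of the vertices $v_i$ of $F$ by the (now arbitrary) images $\pi_\mathbb{R}(R_i)$, which will in general be faces of $F$ rather than vertices. First I would recall that, since the $R_i$ are normally equivalent, they share a common normal fan $\Sigma_Y$, which is the fan of $Y$; the polarizations corresponding to the various $R_i$ all lie in the interior of the same nef chamber, so the identity map on $N$ induces toric morphisms in a compatible way. The surjection $\pi\colon M\to\Lambda$ dualizes to an injection $\pi^\vee\colon N_\Lambda\hookrightarrow N_M$ with saturated image, exhibiting $N_\Lambda$ as a quotient $N_M\twoheadrightarrow N_M/\ker = $ (lattice of the fiber direction); concretely, write $N_M\cong N_Y\oplus N'$ in a way compatible with the splitting coming from $\pi$, so that $F$ lives in $\Lambda_\mathbb{R}=(N')^\vee_\mathbb{R}$ and the $R_i$ are the "slices" of $R$ over the faces $\pi_\mathbb{R}(R_i)$ of $F$.

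The key step is to identify the normal fan $\mathcal{R}$ of $R=R(R_1,\dots,R_k,\pi)$ explicitly enough to read off a map of fans. I would argue that every maximal cone of $\mathcal{R}$ is the normal cone at a vertex $w$ of $R$; such a $w$ projects under $\pi_\mathbb{R}$ to a vertex of $F$, hence lies on exactly one of the slices $R_i$ (after identifying $R_i$ with its copy inside $R$), and $w$ is a vertex of that $R_i$. This gives a well-defined assignment from the vertices (equivalently maximal cones) of $R$ to the vertices (maximal cones) of $Y$, namely "remember which $R_i$ you came from and which vertex of $R_i$ you are." Dually, on rays/support functions, the piecewise-linear function defining $R$ restricts on the $N_Y$-direction to (a translate of) the piecewise-linear function defining the common polytope class of the $R_i$; this is precisely the compatibility condition $\pi^\vee(\mathcal{R}) $ refines $\Sigma_Y$ needed for a toric morphism $X_R\to Y$. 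I would phrase this via the standard criterion (e.g. \cite[Theorem~3.3.4]{CoxLittleSchenck}, or just directly): a lattice map $N_M\to N_Y$ compatible with the fans gives a toric morphism, and here the map is the projection dual to the inclusion $M_Y = \pi$-saturated sublattice $\hookrightarrow M$ that carries the common normal-equivalence structure of the $R_i$.

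For the statement about the generic fiber, I would restrict the morphism $X_R\to Y$ over the dense torus $T_Y\subseteq Y$. The preimage of $T_Y$ is the toric variety of the subfan of $\mathcal{R}$ consisting of those cones whose image in $\Sigma_Y$ is the cone $\{0\}$; by the slice description, over a generic point this subfan is exactly the normal fan of $F$ in $\Lambda_\mathbb{R}$, because generically (i.e. over $T_Y$) the defining data of $R$ degenerates to the Cayley-type sum $\Conv(R_1\times\{v_1\},\dots,R_k\times\{v_k\})$ whose fiber over the torus is governed only by the mutual position of the $v_i=\pi_\mathbb{R}(R_i)$, i.e.\ by $F=\Conv(v_1,\dots,v_k)=\pi_\mathbb{R}(R)$. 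Hence the generic fiber is $X_F$. The main obstacle I anticipate is the first half of the previous paragraph: verifying that $R$ is a lattice polytope whose normal fan genuinely refines $\Sigma_Y$, i.e.\ that the generalized twisted Cayley construction does not introduce new "horizontal" walls that fail to come from walls of the $R_i$; this requires using normal equivalence of the $R_i$ in an essential way (not merely combinatorial equivalence), exactly as in \cite{CDR08}, and is where I would spend most of the care — likely via an explicit description of the facets of $R$ as either facets of some $R_i$ (pulled back along $\pi$) or "lateral" facets governed by facets of $F$.
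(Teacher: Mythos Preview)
Your proposal contains a genuine error at the key step. You assert that every vertex $w$ of $R$ projects under $\pi_\mathbb{R}$ to a vertex of $F$. This is false in the generalized setting, and it is precisely the point where the generalized $\pi$-twisted Cayley sum differs from the classical one. In Example~\ref{example-generaliz-Cayley-sum}, the polytope $R_2=\Conv\{(-2,2),(2,-2)\}$ maps under $\pi(x,y)=x+y$ to $0$, which lies in the interior of $F=[-3,3]$; yet both $(-2,2)$ and $(2,-2)$ are vertices of the hexagon $R$. So the assignment ``remember which $R_i$ you came from and which vertex of $R_i$ you are'' is well-defined, but your justification that this is compatible with the fan map to $\Sigma_Y$ collapses: you cannot deduce that $q_\mathbb{R}(\sigma_w^R)$ lands inside a single cone of $\Sigma_Y$ from the (false) claim that $\pi_\mathbb{R}(w)$ is a vertex of $F$.

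The paper addresses exactly this difficulty by an auxiliary comparison. It first passes to the subpolytope $R'=\Conv(R_{i_1},\ldots,R_{i_\ell})$ built only from those $R_i$ that do map to vertices of $F$; this $R'$ is a classical $\pi$-twisted Cayley sum, so \cite[Lemma~3.6]{CDR08} gives $X_{R'}\to Y$ directly. Vertices $w$ of $R$ mapping to vertices of $F$ are already vertices of $R'$, and $R'\subseteq R$ forces $\sigma_w^R\subseteq\sigma_w^{R'}$, so those are easy. For a vertex $w$ of $R$ mapping to a non-vertex of $F$ (your missing case), the paper introduces a second auxiliary polytope $R''=\Conv(R',w)$ and uses the decomposition $\sigma_{i_j}^{R'}=\tau_{i_j}'+\eta'$ from the proof of \cite[Lemma~3.6]{CDR08} to trap $\sigma_w^R$ inside $\Delta_\mathbb{R}+\eta'$, whence $q_\mathbb{R}(\sigma_w^R)\subseteq q_\mathbb{R}(\eta')$ lies in a cone of $\Sigma_Y$. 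Your proposed facet-by-facet description of $R$ might be made to work, but as written you have not confronted the actual obstruction; the step you flagged as ``the main obstacle'' is indeed the whole content, and it is not resolved by the vertex claim you made earlier.
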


Next, we show that the above structure describes the toric morphisms $X_{\mathcal{R}(P,m)}\rightarrow X_{\mathcal{Q}(P,m)}$.

\begin{theorem}[Theorem~\ref{thm-R(P,m)-is-gen-twisted-Cayley-sum}]
Let $X_P$ be the polarized toric variety associated to a lattice polytope $P$ and let $H\subseteq X_P$ be the dense open torus. Let $\pi\colon M\rightarrow\Lambda$ be the map of character lattices induced by the diagonal embedding $H\hookrightarrow H^m$. The polytope $R(P,m)$ has the structure of generalized $\pi$-twisted Cayley sum with $\pi_\mathbb{R}(R(P,m))=mP$ and $R(P,m)=\Conv(R_1,\ldots,R_k)$ with $R_i$ normally equivalent to $Q(P,m)$.
\end{theorem}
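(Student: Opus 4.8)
The plan is to work with the concrete model of $R(P,m)$ furnished by the embedding $X_{\mathcal{R}(P,m)}\hookrightarrow X_P^m\times X_{\mathcal{Q}(P,m)}$ of \S\ref{sec-toric-family-R(P,m)}. Restricting the exterior tensor product of the ample line bundle of $X_P^m$ (with polytope the $m$-fold product $P\times\cdots\times P\subseteq M_{\mathbb{R}}$) and the ample line bundle of $X_{\mathcal{Q}(P,m)}$ realizes $R(P,m)$, up to a lattice translation, as the Minkowski sum $(P\times\cdots\times P)+\widetilde{Q}$ inside $M_{\mathbb{R}}$, where $\widetilde{Q}$ denotes $Q(P,m)$ placed in $M_\mathbb{R}$ via the identification of the character lattice of $X_{\mathcal{Q}(P,m)}$ with $\ker\pi$ induced by the diagonal; in particular $\widetilde{Q}\subseteq\ker\pi_{\mathbb{R}}$. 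Since $\pi$ is the summation map coming from $H\hookrightarrow H^m$, we have $\pi_{\mathbb{R}}(P\times\cdots\times P)=mP$, hence $\pi_{\mathbb{R}}(R(P,m))=\pi_{\mathbb{R}}(P\times\cdots\times P)+\pi_{\mathbb{R}}(\widetilde{Q})=mP$, which is the first assertion. This also identifies $F=\pi_\mathbb{R}(R(P,m))$ with $mP$, so that $X_F=X_{mP}\cong X_P$ agrees with the description of the generic fibre of $f$.

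Next I would produce the polytopes $R_i$ and determine their normal type. Let $v_1,\dots,v_k$ be the vertices of $P$, so that $k$ is the number of torus fixed points of $X_P$, and for each $i$ pick a cocharacter $u_i$ of $H$ in the interior of the maximal cone of the normal fan of $P$ dual to $v_i$. Its diagonal lift $\delta(u_i)=(u_i,\dots,u_i)=\pi^{\vee}(u_i)$ is a cocharacter of $H^m$, and the associated linear functional on $M_{\mathbb{R}}$ is $\langle x,\delta(u_i)\rangle=\langle\pi_{\mathbb{R}}(x),u_i\rangle$, so it factors through $\pi_{\mathbb{R}}$; therefore the face of $R(P,m)$ on which it is maximal lies over the face of $mP$ on which $u_i$ is maximal, namely the vertex $mv_i$. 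Reading this off the Minkowski decomposition, the face of $P\times\cdots\times P$ in the direction $\delta(u_i)$ is the single vertex $(v_i,\dots,v_i)$, while $\widetilde{Q}$ is flat in that direction, so the corresponding face of $R(P,m)$ is $R_i:=(v_i,\dots,v_i)+\widetilde{Q}$, a lattice translate of $\widetilde{Q}$. Hence the $R_i$ are pairwise lattice translates, so they are normally equivalent to one another and to $Q(P,m)$ in the sense of Definition~\ref{def-strictly-combinatorially-iso}, and $\pi_{\mathbb{R}}(R_i)=\{mv_i\}$; since the $mv_i$ are exactly the vertices of $F=mP$, this supplies the combinatorial data entering Definition~\ref{def-gen-pi-twisted-Cayley-sum}.

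The remaining and most delicate point is the equality $R(P,m)=\Conv(R_1,\dots,R_k)$, together with the verification of every clause of Definition~\ref{def-gen-pi-twisted-Cayley-sum}. One inclusion is immediate, each $R_i$ being a face of $R(P,m)$; for the other it suffices to show that each vertex of $R(P,m)$ already lies on one of the $R_i$ (equivalently, to describe the polyhedral subdivision of $F=mP$ cut out by the faces $R_i$ and check that the corresponding pieces exhaust $R(P,m)$). Here I would exploit that $\mathcal{R}(P,m)$, as a resolution of the toric rational map, refines the normal fan of $P\times\cdots\times P$ and pushes forward to $\mathcal{Q}(P,m)$ under the quotient $H^m\to H^m/H$, hence is the common refinement of these two fans; a maximal cone of $\mathcal{R}(P,m)$ is then a nonempty intersection $\sigma\cap q^{-1}(\tau)$ with $\sigma$ maximal in the normal fan of $P\times\cdots\times P$ and $\tau$ maximal in $\mathcal{Q}(P,m)$, and this intersection records simultaneously a vertex of $R(P,m)$ and the cell of $mP$ over which it lies. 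Translating this bookkeeping into the statement that the $R_i$ (and the induced subdivision of $mP$) account for all of $R(P,m)$, and matching it to the shape prescribed by a generalized $\pi$-twisted Cayley sum, is the step I expect to be the main obstacle; once it is in place, Theorem~\ref{morphism-associated-to-twisted-Cayley-sum} applied to this data returns the toric morphism $X_{\mathcal{R}(P,m)}\to X_{\mathcal{Q}(P,m)}$ with generic fibre $X_{mP}\cong X_P$, confirming that the identification is the intended one.
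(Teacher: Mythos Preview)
Your Minkowski-sum description $R(P,m)=P^m+\widetilde{Q}$ with $\widetilde{Q}\subseteq\ker\pi_\mathbb{R}$, and the conclusion $\pi_\mathbb{R}(R(P,m))=mP$, match the paper exactly. The gap is in the choice of the $R_i$. You take $R_i=(v_i,\dots,v_i)+\widetilde{Q}$ only for the vertices $v_i$ of $P$, i.e.\ only over the vertices $mv_i$ of $mP$, and then hope to prove $R(P,m)=\Conv(R_1,\dots,R_k)$. That equality is false in general. Already for $P=\Delta_1$ and $m=2$: here $R(\Delta_1,2)=[0,1]^2+\widetilde{Q}$ is the Minkowski sum of the unit square with a segment in the direction $(1,-1)$, hence a hexagon, while your $R_1=(0,0)+\widetilde{Q}$ and $R_2=(1,1)+\widetilde{Q}$ are two parallel translates of that segment and their convex hull is only a parallelogram. (This is consistent with the Corollary after Theorem~\ref{thm-losev-manin-family-revisited}, which says $m+1$ polytopes are needed, not two.) So the ``most delicate point'' you flag is not merely delicate; with your $R_i$ it cannot be carried out, and no amount of bookkeeping with cones of $\mathcal{R}(P,m)$ will fix it.

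The paper's argument takes $R_i$ to be the \emph{full fiber} $\pi_\mathbb{R}^{-1}(v)\cap R(P,m)=s^{-1}(v)+\widetilde{Q}$ over each lattice point $v\in mP$, where $s\colon P^m\to mP$ is the summation map. Since the vertices of $R(P,m)$ are lattice points and $\pi$ is a lattice map, every vertex lies in some such fiber, so $R(P,m)=\Conv(R_i)$ is automatic. The substantive step you are missing is that these fibers are \emph{normally equivalent} to $Q(P,m)$: over a non-vertex $v$ the fiber $s^{-1}(v)$ is a positive-dimensional polytope, so $s^{-1}(v)+\widetilde{Q}$ is not a translate of $\widetilde{Q}$, and one must show its normal fan still equals $\mathcal{Q}(P,m)$. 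This holds because $\mathcal{Q}(P,m)$, being the secondary/fiber fan of $s\colon P^m\to mP$, refines the normal fan of every fiber $s^{-1}(v)$ (the ``fiber fan'' property of \cite{CM07}), so the common refinement giving the normal fan of $s^{-1}(v)+\widetilde{Q}$ is just $\mathcal{Q}(P,m)$ again. That refinement fact is the key input your outline does not supply.
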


\begin{figure}
\centering
\includegraphics[scale=0.60,valign=t]{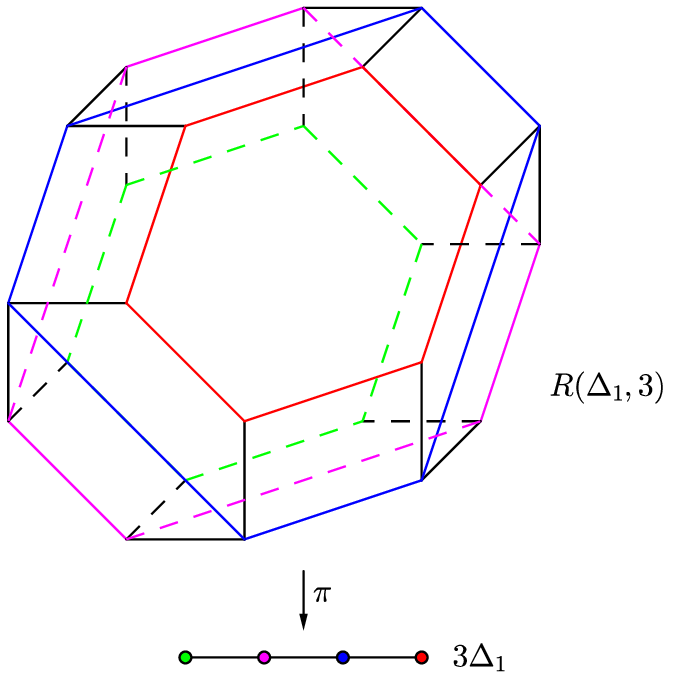}
\caption{The polytope $R(\Delta_1,3)$ as generalized $\pi$-twisted Cayley sum.}
\label{cut3x3x3cube}
\end{figure}

In \S\,\ref{families-on-toric-Chow-quotients} we investigate different geometric features of the family $f\colon X_{\mathcal{R}(P,m)}\rightarrow X_{\mathcal{Q}(P,m)}$. $f$ is equidimensional and it can be endowed with light and heavy sections analogous to the ones of the family of the Losev--Manin moduli space (Proposition~\ref{sections-generalized}). Moreover, we have forgetful morphisms $X_{\mathcal{Q}(P,m+1)}\rightarrow X_{\mathcal{Q}(P,m)}$ (Proposition~\ref{forgetful-maps}). Note that these last two propositions are immediate generalizations of \cite[Lemma~8.6 and Proposition~8.9]{ST21}. More remarks about the structure of the reducible fibers and the relation with the sections can be found in \S\,\ref{on-the-structure-of-reducible-fibers}.

In \S\,\ref{flatness-and-non-reduced-fibers-example} we focus on the following problem. With the same notation as \S\,\ref{rel-with-tor-stacks}, it is known that the fibers of the morphism of coarse moduli spaces corresponding to $\mathcal{U}\rightarrow[V/\!/H]$ are in general not reduced. In the current paper, we give a combinatorial characterization of the reducedness of the fibers of the morphism of coarse moduli spaces when $V=X_P^m$. The starting point is a combinatorial characterization of the cones in $\mathcal{Q}(P,m)$ is terms of certain convex subdivisions $\mathcal{S}(\mathbf{v})$ of $\mathbb{R}^d$ for $\mathbf{v}\in(\mathbb{R}^d)^m$ (see Definition~\ref{def:subdivision-Rd-with-spiders}). We prove the following.

\begin{theorem}[Theorem~\ref{thm-criterion-for-reduced-fibers}]
Consider a toric morphism $f\colon X_{\mathcal{R}(P,m)}\rightarrow X_{\mathcal{Q}(P,m)}$. Then $f$ is flat with reduced fibers if and only if for all vectors $\mathbf{v}\in(\mathbb{Z}^d)^m$ and for all non-empty cells $C\in\mathcal{S}(\mathbf{v})$, there exists a point in $C$ with integral coordinates.
\end{theorem}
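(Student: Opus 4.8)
The plan is to translate the geometric statement about flatness and reducedness of the fibers of $f\colon X_{\mathcal{R}(P,m)}\rightarrow X_{\mathcal{Q}(P,m)}$ into a purely combinatorial condition on the fan-theoretic description of $f$, and then to recognize that condition as the stated integral-point property of the cells of $\mathcal{S}(\mathbf{v})$. First I would recall the standard toric criterion for a toric morphism to be flat with reduced fibers: if $f$ is induced by a map of fans $\Phi\colon(N',\mathcal{R}(P,m))\to(N,\mathcal{Q}(P,m))$, then $f$ is equidimensional (this is already noted in the paper), and flatness with reduced fibers over the orbit corresponding to a cone $\sigma\in\mathcal{Q}(P,m)$ is controlled by the sublattice generated by the relevant rays of $\mathcal{R}(P,m)$ inside the fiber direction. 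Concretely, for each cone $\sigma$ of $\mathcal{Q}(P,m)$ one looks at the cones of $\mathcal{R}(P,m)$ mapping onto $\sigma$; the fiber over the corresponding torus orbit is reduced precisely when the primitive generators of those cones, read in the quotient lattice $N'/\Phi^{-1}(N_\sigma)$, already generate the full lattice of the subtorus they span — equivalently, when no "multiplicity" is introduced. Since $\mathcal{R}(P,m)$ resolves the rational map $X_P^m\dashrightarrow X_{\mathcal{Q}(P,m)}$ and carries the polarization coming from the generalized $\pi$-twisted Cayley sum $R(P,m)$, I would use Theorem~\ref{thm-R(P,m)-is-gen-twisted-Cayley-sum} to make the fiber direction explicit: the generic fiber is $X_{mP}$ and the fibers over deeper strata are the degenerate toric varieties $X_{\mathcal{S}(\mathbf{v})}$ attached to the subdivisions.

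The second step is the dictionary between cones of $\mathcal{Q}(P,m)$ and the subdivisions $\mathcal{S}(\mathbf{v})$ from Definition~\ref{def:subdivision-Rd-with-spiders}: a cone of $\mathcal{Q}(P,m)$ corresponds to a choice of $\mathbf{v}\in(\mathbb{Z}^d)^m$ (a "spider" configuration), and the fiber of $f$ over the associated torus orbit is the (possibly reducible, possibly non-reduced) degenerate toric variety whose irreducible components are indexed by the full-dimensional cells $C\in\mathcal{S}(\mathbf{v})$, each component being the projective toric variety $X_C$ attached to the lattice polytope that $C$ carves out of $mP$. The scheme structure on the fiber is the one inherited from $X_{\mathcal{R}(P,m)}\hookrightarrow X_P^m\times X_{\mathcal{Q}(P,m)}$, hence from the polytope $R(P,m)$ and the projection $\pi_\mathbb{R}$; a component $X_C$ appears with multiplicity $>1$ — i.e. the fiber fails to be reduced — exactly when the lattice generated by the vertices (or more precisely by the edge directions at a vertex, i.e. the relevant part of $mP\cap\mathbb{Z}^d$ as seen through the cell $C$) is a proper finite-index sublattice of the ambient one. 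This index is $1$ for every cell simultaneously over the whole base precisely when every cell $C\in\mathcal{S}(\mathbf{v})$ contains a lattice point: a non-empty lattice-point-free cell is exactly the obstruction that forces the corresponding component of the fiber, or the local ring along it, to pick up nilpotents, while conversely the presence of an interior (or boundary) lattice point lets one exhibit a monomial that generates the reduced structure.

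So the key steps, in order, are: (1) set up the fan map $\Phi$ for $f$ and invoke the standard toric flatness/reducedness criterion in terms of sublattice indices along each cone of $\mathcal{Q}(P,m)$, using equidimensionality of $f$ to reduce "flat with reduced fibers" to "reduced fibers"; (2) use Theorem~\ref{thm-R(P,m)-is-gen-twisted-Cayley-sum} and the definition of $R(P,m)$ as a generalized $\pi$-twisted Cayley sum to identify, for a cone indexed by $\mathbf{v}\in(\mathbb{Z}^d)^m$, the fiber with the degenerate toric variety built from the subdivision $\mathcal{S}(\mathbf{v})$ of $mP$, its components corresponding to the cells $C$; (3) compute the multiplicity of the component attached to a cell $C$ as the index of the lattice spanned by the integral points of $mP$ lying in $C$ (read through $\pi_\mathbb{R}$) inside the full lattice, and show this index is $1$ if and only if $C$ contains a point with integral coordinates; (4) quantify over all $\mathbf{v}$ and all non-empty cells $C$ to get the stated equivalence, noting that it suffices to check $\mathbf{v}\in(\mathbb{Z}^d)^m$ because these index the cones of $\mathcal{Q}(P,m)$. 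The main obstacle is step (3): one must carefully unwind how the polytope $R(P,m)$, and in particular the projection $\pi_\mathbb{R}$ with fiber $mP$, induces the scheme structure on the special fibers, and then pin down that the relevant multiplicity is governed exactly by the divisibility of the lattice points of $mP$ restricted to the cell rather than by some coarser or finer lattice; showing that a lattice-point-free non-empty cell genuinely produces a non-reduced component (and not merely a potential one) is where the argument has real content, and here I expect to need a direct local computation on a chart of $X_{\mathcal{R}(P,m)}$ together with the normal equivalence of the $R_i$ guaranteed by Theorem~\ref{thm-R(P,m)-is-gen-twisted-Cayley-sum}.
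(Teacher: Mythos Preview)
Your outline diverges from the paper's argument and, more importantly, has a real confusion at its core. The cells $C\in\mathcal{S}(\mathbf{v})$ live in $\mathbb{R}^d$ on the \emph{cocharacter} (one-parameter subgroup) side: they are intersections of translated cones of $\Sigma_P$, not pieces of the polytope $mP$, which lives on the \emph{character} side. So phrases like ``the lattice polytope that $C$ carves out of $mP$'' and ``the index of the lattice spanned by the integral points of $mP$ lying in $C$'' do not parse, and your step~(3) as written cannot be carried out. The statement you are trying to prove is an $N$-lattice statement (does a certain translated intersection of cones contain a point of $\mathbb{Z}^d$?), and your proposed route through $R(P,m)\to mP$ and component multiplicities would require an additional duality argument that you have not supplied and that is not needed.

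The paper's proof bypasses all of this. It invokes the criterion from \cite[Lemma~5.2, Remark~5.3]{AK00} and \cite[Theorem~2.1.4]{Mol21}: since $f$ is already equidimensional, it is flat with reduced fibers if and only if for every $\eta\in\mathcal{R}(P,m)$ the map $q$ takes the lattice points of $\eta$ \emph{onto} the lattice points of $q(\eta)$. The whole proof is then a two-line computation in each direction. Given $\eta=q^{-1}(\xi)\cap\tau$ and a lattice point $[\mathbf{v}]\in q(\eta)$, a lattice lift must be of the form $\mathbf{v}+z$ with $z\in\mathbb{Z}^d$; the condition $\mathbf{v}+z\in\tau$ is exactly $z\in C:=\cap_i(-v_i+\tau_i)\in\mathcal{S}(\mathbf{v})$. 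Thus a lattice lift exists iff $C$ contains an integral point, and running this over all $\eta$ and all lattice points of $q(\eta)$ gives the equivalence. No appeal to Theorem~\ref{thm-R(P,m)-is-gen-twisted-Cayley-sum}, no identification of fiber components, and no multiplicity computation is required.
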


As an application of the above criterion, in Theorem~\ref{P-simplex-flat-reduced-fibers} we show that $f$ is flat with reduced fibers if $P$ is the $d$-dimensional simplex $\Delta_d$.

As $X_{\mathcal{R}(P,m)}\rightarrow X_{\mathcal{Q}(P,m)}$ together with the light and heavy sections is a generalization of the Losev--Manin moduli space $\overline{\mathrm{LM}}_{m+2}$, it is natural to ask whether $X_{\mathcal{R}(\Delta_1,m)}$ coincides with the universal family $\overline{\mathrm{LM}}_{m+3}$ over $\overline{\mathrm{LM}}_{m+2}$. In other words, one can ask if the fan $\mathcal{R}(\Delta_1,m)$ is isomorphic to $\mathcal{Q}(\Delta_1,m+1)$. In section \S \ref{family-losev-manin-space-revisited}  we prove that this is indeed the case.
\begin{theorem}[Theorem~\ref{thm-losev-manin-family-revisited}]
The fans $\mathcal{R}(\Delta_1,m)$ and $\mathcal{Q}(\Delta_1,m+1)$ are isomorphic.
\end{theorem}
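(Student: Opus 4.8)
The plan is to identify both fans explicitly as the fans cut out by central hyperplane arrangements of braid type, and then produce a single lattice isomorphism intertwining the two arrangements.

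\emph{Step 1: the fan $\mathcal{R}(\Delta_1,m)$.} For $P=\Delta_1$ we have $N_{\Delta_1}=\mathbb{Z}$, the normal fan $\Sigma_{\Delta_1}$ is the complete fan of $\mathbb{R}$, and the diagonal embedding $X_{\Delta_1}\hookrightarrow X_{\Delta_1}^m$ induces on cocharacter lattices the diagonal $\mathbb{Z}\hookrightarrow\mathbb{Z}^m=N_{\Delta_1}^m$. The fan $\mathcal{R}(\Delta_1,m)$ lives in $\mathbb{R}^m$ with lattice $\mathbb{Z}^m$, and by the construction of \S\ref{sec-toric-family-R(P,m)} (resolving $X_{\Delta_1}^m\dashrightarrow X_{\mathcal{Q}(\Delta_1,m)}$ inside $X_{\Delta_1}^m\times X_{\mathcal{Q}(\Delta_1,m)}$) it is the common refinement of the product fan $\Sigma_{\Delta_1}^m$ — the fan of $(\mathbb{P}^1)^m$, cut out by the coordinate hyperplanes $\{x_i=0\}$ — and of the preimage $p^{-1}\mathcal{Q}(\Delta_1,m)$ of $\mathcal{Q}(\Delta_1,m)$ under the quotient projection $p\colon\mathbb{R}^m\to\mathbb{R}^m/\mathbb{R}\mathbf{1}$. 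Since $X_{\mathcal{Q}(\Delta_1,m)}\cong\overline{\mathrm{LM}}_{m+2}$ and $Q(\Delta_1,m)$ is the $(m-1)$-permutohedron, $\mathcal{Q}(\Delta_1,m)$ is the braid fan of $S_m$, i.e. the fan of $\mathbb{R}^m/\mathbb{R}\mathbf{1}$ cut out by the images of the hyperplanes $\{x_i=x_j\}$; hence $p^{-1}\mathcal{Q}(\Delta_1,m)$ is the fan of $\mathbb{R}^m$ cut out by $\{x_i=x_j:1\le i<j\le m\}$. Therefore $\mathcal{R}(\Delta_1,m)$ is exactly the fan whose maximal cones are the closures of the chambers (and their faces) of the central arrangement
\[
\mathcal{A}_m=\{x_i=0:1\le i\le m\}\cup\{x_i=x_j:1\le i<j\le m\}\subseteq\mathbb{R}^m,
\]
taken with the lattice $\mathbb{Z}^m$.

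\emph{Step 2: comparison with $\mathcal{Q}(\Delta_1,m+1)$.} By the same token, $\mathcal{Q}(\Delta_1,m+1)$ is the braid fan of $S_{m+1}$: it lives in $\mathbb{R}^{m+1}/\mathbb{R}\mathbf{1}$ with lattice $\mathbb{Z}^{m+1}/\mathbb{Z}\mathbf{1}$ and is cut out by the images of the hyperplanes $\{y_a=y_b:1\le a<b\le m+1\}$. Consider the linear map $L\colon\mathbb{R}^m\to\mathbb{R}^{m+1}/\mathbb{R}\mathbf{1}$, $L(x_1,\dots,x_m)=[(x_1,\dots,x_m,0)]$, with inverse $[y]\mapsto(y_1-y_{m+1},\dots,y_m-y_{m+1})$. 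One checks at once that $L$ restricts to an isomorphism of lattices $\mathbb{Z}^m\xrightarrow{\ \sim\ }\mathbb{Z}^{m+1}/\mathbb{Z}\mathbf{1}$ (the classes $[e_1],\dots,[e_m]$ form a $\mathbb{Z}$-basis of $\mathbb{Z}^{m+1}/\mathbb{Z}\mathbf{1}$), that $L$ maps the hyperplane $\{x_i=0\}$ onto the image of $\{y_i=y_{m+1}\}$ for each $1\le i\le m$, and that it maps $\{x_i=x_j\}$ onto the image of $\{y_i=y_j\}$ for $1\le i<j\le m$. Since the pairs $\{i,m+1\}$ and $\{i,j\}$ with $i<j\le m$ exhaust all $2$-subsets of $\{1,\dots,m+1\}$, the map $L$ carries the arrangement $\mathcal{A}_m$ bijectively onto the braid arrangement of $S_{m+1}$ projected to $\mathbb{R}^{m+1}/\mathbb{R}\mathbf{1}$. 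Consequently $L$ carries $\mathcal{R}(\Delta_1,m)$ onto $\mathcal{Q}(\Delta_1,m+1)$ cone by cone; being a lattice isomorphism, it is the asserted isomorphism of fans. (As a consistency check, both fans have $2^{m+1}-2$ rays and $(m+1)!$ maximal cones.)

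\emph{Main obstacle.} The delicate point is Step 1: one must read off from the definition of $\mathcal{R}(P,m)$ — equivalently, from the description of the cones of $\mathcal{Q}(P,m)$ via the subdivisions $\mathcal{S}(\mathbf{v})$ — that for $P=\Delta_1$ the fan is precisely the arrangement $\mathcal{A}_m$, with no hidden further subdivision and with ambient lattice exactly $\mathbb{Z}^m$ (no finite-index subtleties). An alternative, more conceptual route goes through \S\ref{rel-with-tor-stacks}: $X_{\mathcal{R}(\Delta_1,m)}$ is the coarse space of the universal family over the Losev--Manin stack, which is $\overline{\mathrm{LM}}_{m+3}$ with its forgetful map to $\overline{\mathrm{LM}}_{m+2}$, and this identification is toric; since $\overline{\mathrm{LM}}_{m+3}=X_{\mathcal{Q}(\Delta_1,m+1)}$ the fans would then agree. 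I would keep this only as a sanity check, as it imports external facts about Losev--Manin spaces that the explicit computation above avoids; note also that the outcome is consistent with Theorem~\ref{thm-R(P,m)-is-gen-twisted-Cayley-sum}, which exhibits $R(\Delta_1,m)$ as a generalized $\pi$-twisted Cayley sum over $\pi_\mathbb{R}(R(\Delta_1,m))=m\Delta_1$, matching the permutohedron.
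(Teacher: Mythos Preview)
Your proof is correct and takes a genuinely different, more elementary route than the paper's. Both arguments use the same linear isomorphism --- the paper's $f_{m+1}\colon[\mathbf{v}]\mapsto(v_1-v_{m+1},\ldots,v_m-v_{m+1})$ is exactly your $L^{-1}$ --- but the strategies diverge from there. The paper proceeds cone by cone: it first proves a preparatory lemma (Lemma~\ref{key-lemma-for-family-losev-manin}) that $\overline{p}_{m+1}$ sends each cone of $\mathcal{Q}(\Delta_1,m+1)$ to a cone of $\mathcal{Q}(\Delta_1,m)$, then for each $\xi\in\mathcal{Q}(\Delta_1,m+1)$ constructs an explicit $\tau_\xi\in\Sigma^m$ and verifies $f_{m+1}(\xi)=q^{-1}(\overline{p}_{m+1}(\xi))\cap\tau_\xi$ via a case analysis using the subdivision description $\mathcal{S}(\mathbf{v})$ from Lemma~\ref{bijection-cones-subdivisions}. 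You instead identify both fans \emph{globally} as induced by hyperplane arrangements --- reading off from the definition of $\mathcal{R}(P,m)$ as a common refinement that it is cut by $\{x_i=0\}\cup\{x_i=x_j\}$, and invoking the standard fact that the permutohedron's normal fan is the braid arrangement --- and then simply match the arrangements under $L$.

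Your approach is shorter and more transparent; it works because for $P=\Delta_1$ every fan in sight happens to be the face fan of a central hyperplane arrangement (since $\Sigma_{\Delta_1}$ itself is), so the common refinement is again of this form. The paper's longer argument has the compensating virtue of staying inside the general machinery it develops and of making explicit, via parenthetical remarks inside the proof, the exact steps where the $\Delta_1$ hypothesis is used; this dovetails with Remark~\ref{key-lemma-for-family-losev-manin-false-in-general} and Remark~\ref{non-recursive-family-in-general}, which explain why the recursion $\mathcal{R}(P,m)\cong\mathcal{Q}(P,m+1)$ fails for $P=\Delta_2$. Your ``Main obstacle'' paragraph correctly flags the one point requiring care, namely that $\mathcal{R}(\Delta_1,m)$ really is the arrangement fan of $\mathcal{A}_m$ with lattice $\mathbb{Z}^m$; this follows immediately from the definition $\mathcal{R}(P,m)=\{q^{-1}(\xi)\cap\tau\}$ once one notes it is literally the common refinement of $\Sigma_{\Delta_1}^m$ and $q^{-1}(\mathcal{Q}(\Delta_1,m))$, so no gap remains.
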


This implies that $R(\Delta_1,m)$ is the $m$-dimensional permutohedron. Note that the recursion $\mathcal{R}(P,m)\cong\mathcal{Q}(P,m+1)$ is not true in general: we give a counterexample in Remark~\ref{non-recursive-family-in-general}. As $R(P,1)=P$ for any lattice polytope $P$ and $R(\Delta_1,m)$ are the permutohedra, the next new example of polytope $R(P,m)$ to study is the $4$-dimensional polytope $R(\Delta_2,2)$. In \S\,\ref{explicit-examples} we describe $R(\Delta_2,2)$ by listing the maximal dimensional cones of its normal fan. We note that $R(\Delta_2,2)$ is not a simple polytope, showing that in general the toric varieties $X_{R(P,m)}$ are not $\mathbb{Q}$-factorial.


\subsection*{Acknowledgements}
We would like to thank Samouil Molcho, Jenia Tevelev, and Lorenzo Venturello for useful conversations. We also thank the anonymous referee for the valuable comments and feedback. The first author was supported by VR grant [NT:2018-03688]. The second author was supported by the KTH grant Verg Foundation.


\section{Generalized \texorpdfstring{$\pi$}{Lg}-twisted Cayley sums}

\begin{definition}
\label{def-strictly-combinatorially-iso}
Let $P$ and $Q$ be two polytopes of the same dimension and denote by $\mathcal{B}(P)$ and $\mathcal{B}(Q)$ their respective collections of faces. Then $P$ and $Q$ are called \emph{strictly combinatorially isomorphic} if there exists a bijective, inclusion-preserving map $\varphi\colon\mathcal{B}(P)\rightarrow\mathcal{B}(Q)$ such that for any face $F\in\mathcal{B}(P)$, the affine hulls of $F$ and $\varphi(F)$ are translates of each other \cite[Definition~2.11]{Ewa96}. Under this assumption, we further say that $P$ and $Q$ are \emph{normally equivalent} if their (inner) normal fans coincide. Notice that this assumption implies that $P$ and $Q$ define the same toric variety.
\end{definition}

\begin{definition}
\label{def-gen-pi-twisted-Cayley-sum}
Let $\pi\colon M\rightarrow \Lambda$ be a surjective map of lattices and denote by $\pi_\mathbb{R}$ its extension $M_\mathbb{R}=M\otimes\mathbb{R}\rightarrow\Lambda\otimes\mathbb{R}=\Lambda_\mathbb{R}$. Let $R_1,\ldots,R_k\subseteq M_\mathbb{R}$ be normally equivalent lattice polytopes and let $R=\Conv(R_1,\ldots,R_k)$. Assume that $\pi_\mathbb{R}(R_i)=v_i$ is a lattice point in $\Lambda$ for all $i$ and let $F=\Conv(v_1,\ldots,v_k)$. Then $\pi_\mathbb{R}|_R\colon R\rightarrow F$ is a polytope fibration which we call \emph{generalized $\pi$-twisted Cayley sum}. For simplicity, we sometimes write $R=R(R_1,\ldots,R_k,\pi)$.
\end{definition}

\begin{example}
\label{example-generaliz-Cayley-sum}
Let $M=\mathbb{Z}^2$, $\Lambda=\mathbb{Z}$, and define $\pi\colon M\rightarrow\Lambda$ such that $(x,y)\mapsto x+y$. The following lattice polytopes in $M_\mathbb{R}$ are normally equivalent:
\[
R_1=\Conv\{(-2,-1),(-1,-2)\},~R_2=\Conv\{(-2,2),(2,-2)\},~R_3=\Conv\{(1,2),(2,1)\}.
\]
Note that $\pi(R_1)=-3$, $\pi(R_2)=0$, $\pi(R_3)=3$, so let $F\subseteq\Lambda_\mathbb{R}$ be the segment $[-3,3]$. Then $\pi_\mathbb{R}\colon R=\Conv(R_1,R_2,R_3)\rightarrow F$ is a generalized $\pi$-twisted Cayley sum. This is pictured in Figure~\ref{example-generalized-twisted-Cayley-sum}.
\end{example}

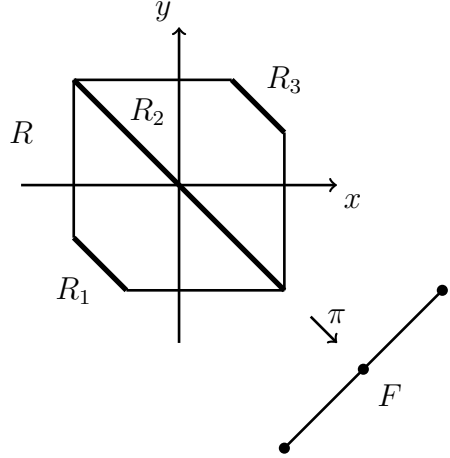
\begin{figure}
\begin{tikzpicture}[scale=0.7]

	\draw[line width=1pt] (-2,2) -- (1,2);
	\draw[line width=1pt] (-2,2) -- (-2,-1);
	\draw[line width=2pt] (-2,-1) -- (-1,-2);
	\draw[line width=1pt] (2,-2) -- (-1,-2);
	\draw[line width=1pt] (2,-2) -- (2,1);
	\draw[line width=2pt] (2,1) -- (1,2);
	\draw[line width=2pt] (-2,2) -- (2,-2);

	\draw[->,line width=1pt] (2.5,-2.5) -- (3,-3);

	\draw[line width=1pt] (-1.5+3.5,-1.5-3.5) -- (1.5+3.5,1.5-3.5);
	\fill (-1.5+3.5,-1.5-3.5) circle (3pt);
	\fill (1.5+3.5,1.5-3.5) circle (3pt);
	\fill (3.5,-3.5) circle (3pt);

	\draw[->,line width=1pt] (-3,0) -- (3,0);
	\draw[->,line width=1pt] (0,-3) -- (0,3);

	\node at (3.3,-0.3) {$x$};
	\node at (-0.3,3.3) {$y$};

	\node at (3,-2.5) {$\pi$};

	\node at (-2,-2) {$R_1$};
	\node at (-0.6,1.4) {$R_2$};
	\node at (2,2) {$R_3$};
	\node at (-3,1) {$R$};

	\node at (4,-4) {$F$};

\end{tikzpicture}
\caption{Generalized $\pi$-twisted Cayley sum in Example~\ref{example-generaliz-Cayley-sum}.}
\label{example-generalized-twisted-Cayley-sum}
\end{figure}

We now prove Theorem~\ref{morphism-associated-to-twisted-Cayley-sum}, which illustrates how the generalized $\pi$-twisted Cayley sums in Definition~\ref{def-gen-pi-twisted-Cayley-sum} give rise to toric morphisms. We restate the theorem here for the reader's convenience.

\begin{theorem}
Let $R=R(R_1,\ldots,R_k,\pi)$ be a generalized $\pi$-twisted Cayley sum and let $F=\pi_\mathbb{R}(R)$. Let $Y$ be the toric variety associated to the normally equivalent lattice polytopes $R_i$. Then there exists a toric morphism $X_R\rightarrow Y$ with generic fiber $X_F$.
\end{theorem}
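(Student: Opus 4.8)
The plan is to build the toric morphism $X_R\to Y$ from a map of fans, exploiting that both $R$ and the common polytope type of the $R_i$ are polytopes whose normal fans we understand. First I would set up the correct lattices: the polytope $R$ lives in $M_\mathbb{R}$, so $X_R$ has character lattice $M$; the polytopes $R_i$ are all normally equivalent, defining $Y$ with some character lattice, say $N_0$ — but since the $R_i\subseteq M_\mathbb{R}$ span an affine subspace parallel to $\ker\pi_\mathbb{R}$ (because $\pi_\mathbb{R}(R_i)$ is the single point $v_i$), the natural character lattice for $Y$ is $M' := M\cap\ker\pi_\mathbb{R}$, i.e. the kernel of $\pi\colon M\to\Lambda$ (surjectivity of $\pi$ guarantees $M'$ has the right rank and that $0\to M'\to M\xrightarrow{\pi}\Lambda\to 0$ is exact). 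The inclusion $M'\hookrightarrow M$ dualizes to a surjection of cocharacter lattices $N\to N'$, which is the candidate for the morphism of tori $T_M\to T_{M'}$ underlying $X_R\to Y$.

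Next I would verify this lattice map is compatible with the fans, i.e. that it maps each cone of the normal fan $\mathcal{N}(R)$ into a cone of $\mathcal{N}(R_i)=:\mathcal{N}_Y$. The clean way to see this: the normal fan of $R=\Conv(R_1,\ldots,R_k)$ refines, on each cocharacter, the information about which vertex of $R$ is selected, and a vertex of $R$ is a vertex of one of the $R_i$; so for a cocharacter $u\in N$ in the relative interior of a cone $\sigma\in\mathcal{N}(R)$, the face of $R$ it selects is a face of some $R_i$, hence the image $\bar u\in N'$ selects a face of $R_i$ (using that the $R_i$ are all normally equivalent, so which $R_i$ we use does not matter). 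This shows the projection $N\to N'$ sends $\sigma$ into a cone of $\mathcal{N}_Y$, giving a toric morphism $\phi\colon X_R\to Y$. I would phrase this via the standard criterion (Cox–Little–Schenck, Theorem 3.3.4): a lattice map compatible with fans induces a toric morphism.

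Then I would identify the generic fiber. The kernel of $T_M\to T_{M'}$ is the subtorus with character lattice $\mathrm{coker}(M'\hookrightarrow M)\cong \Lambda$, i.e. $T_\Lambda$. The map $\pi_\mathbb{R}|_R\colon R\to F$ exhibits $R$ as fibered over $F$; over a generic (interior) point of $F$ the fiber polytope is a translate of the Minkowski-type "fiber" that, combinatorially, retains exactly the directions of $\ker\pi_\mathbb{R}$, and one checks this fiber has normal fan equal to the normal fan of $F$ in $\Lambda_\mathbb{R}$ — so the scheme-theoretic generic fiber of $\phi$ is $X_F$. Concretely I would argue at the level of the dense torus first (the generic fiber of $T_M\to T_{M'}$ is the coset torus $\cong T_\Lambda$, and $X_F$ has dense torus $T_\Lambda$), and then match the toric boundary by noting that over the generic point of $Y$ only the cones of $\mathcal{N}(R)$ that project to $\{0\}\in N'$ survive, and these are precisely the preimages under $N\to N'$ of $0$ intersected with $\mathcal{N}(R)$, which assemble into the normal fan of $F$ pushed into $\Lambda_\mathbb{R}$.

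The main obstacle I expect is the last step — pinning down that the generic fiber is exactly $X_F$ and not merely something mapping finitely or birationally to it. The subtlety is that $\Conv(R_1,\ldots,R_k)$ need not be a "Cayley polytope" in the strict sense (the $v_i$ need not be affinely independent, indeed $F$ can be full-dimensional in $\Lambda_\mathbb{R}$), so I cannot simply quote the clean fiber statement \cite[Lemma~3.6]{CDR08}; I have to redo the fiber computation allowing $F$ to be an arbitrary lattice polytope. The key technical claim to nail down is: for $u$ a generic cocharacter of $\Lambda$ (pulled back to $N$), the face of $R$ minimized by $u$ is a translate of $R_i$ where $i$ indexes the vertex $v_i$ of $F$ minimized by $u$ — this uses normal equivalence of the $R_i$ crucially and is what makes the combinatorics of the fiber agree with that of $F$. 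Granting that, restricting $\mathcal{N}(R)$ to the subspace $\ker(N\to N')$ yields $\mathcal{N}_{\Lambda_\mathbb{R}}(F)$, and hence the generic fiber is $X_F$, completing the proof.
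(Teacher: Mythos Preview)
Your approach is correct and in fact more direct than the paper's. For fan compatibility you use that every vertex $w$ of $R=\Conv(R_1,\ldots,R_k)$ is already a vertex of some $R_i$; since $R_i\subseteq R$ one has $\sigma_w^R\subseteq\sigma_w^{R_i}$, and because $R_i$ lies in a translate of $\ker\pi_\mathbb{R}$ the cone $\sigma_w^{R_i}$ contains the lineality space $\Delta_\mathbb{R}$ and projects under $q\colon N_\mathbb{R}\to(N/\Delta)_\mathbb{R}$ exactly onto the normal cone of $R_i$ at $w$, a cone of $\Sigma_Y$. The paper takes a longer route: it first forms the classical $\pi$-twisted Cayley sum $R'=\Conv(R_{i_1},\ldots,R_{i_\ell})$ using only those $R_i$ lying over \emph{vertices} of $F$, invokes \cite[Lemma~3.6]{CDR08} to obtain $X_{R'}\to Y$, and then treats vertices of $R$ lying over non-vertex points of $F$ via an auxiliary polytope $R''=\Conv(R',w)$ and the cone decomposition $\sigma_{i_j}^{R'}=\tau_{i_j}'+\eta'$ from the proof of that lemma. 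Your argument bypasses this case distinction entirely, at the cost of not making the link to the classical Cayley-sum result explicit. One minor correction: in your generic-fiber discussion the phrase ``normal fan of $F$ pushed into $\Lambda_\mathbb{R}$'' should read $\Delta_\mathbb{R}$ (the normal fan of $F\subseteq\Lambda_\mathbb{R}$ lives in the dual $\Delta_\mathbb{R}\subseteq N_\mathbb{R}$); the identification $\{\sigma\in\Sigma_R:\sigma\subseteq\Delta_\mathbb{R}\}=\Sigma_F$ then follows from $\langle u,m\rangle=\langle u,\pi_\mathbb{R}(m)\rangle$ for $u\in\Delta_\mathbb{R}$. The paper's proof, incidentally, does not address the generic-fiber claim at all.
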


\begin{proof}
We start by establishing some notation. Given a polytope $P$ and a vertex $v$ of $P$, denote by $\sigma_v^P$ the cone normal to $P$ at $v$. Let $\Delta\rightarrow N$ be the inclusion of lattices dual to $\pi\colon M\rightarrow\Lambda$ and let $q\colon N\rightarrow N/\Delta$ be the quotient map. The fan $\Sigma_R$ normal to $R$ is contained in $N_\mathbb{R}$ and the fan $\Sigma_Y$ is contained in $(N/\Delta)_\mathbb{R}$. Let $\{v_{i_1},\ldots,v_{i_\ell}\}$ be the vertices of $F$ and let $R'=\Conv(R_{i_1},\ldots,R_{i_\ell})$. Then $R'$ is a $\pi$-twisted Cayley sum as in Definition~\ref{def-pi-twisted-Cayley-sum}, and its normal fan is contained in $N_\mathbb{R}$. By \cite[Lemma~3.6]{CDR08} we have a toric morphism $X_{R'}\rightarrow Y$. To guarantee the existence of a toric morphism $X_R\rightarrow Y$ we show that for each vertex $w$ of $R$, there exists a cone $\sigma\in\Sigma_Y$ such that $q_\mathbb{R}(\sigma_w^R)\subseteq\sigma$.

First let $w$ be a vertex of $R$ which maps to a vertex of $F$. In this case, $w$ is also a vertex of $R'$, and since $R'\subseteq R$ we have that $\sigma_w^R\subseteq\sigma_w^{R'}$. As we have a toric morphism $X_{R'}\rightarrow Y$, there exists $\sigma\in\Sigma_Y$ such that $q_\mathbb{R}(\sigma_w^{R'})\subseteq\sigma$, hence $q_\mathbb{R}(\sigma_w^{R})\subseteq\sigma$. Therefore, let us focus on the case where $w$ is a vertex of $R$ not mapping to a vertex of $F$.

As $w$ is a vertex of $R_i$ for some $i$, let $w_{i_1},\ldots,w_{i_\ell}$ be the vertices of $R_{i_1},\ldots,R_{i_\ell}$ corresponding to $w$ under the normally equivalent assumption. It follows that $w_{i_1},\ldots,w_{i_\ell}$ are vertices of $R'$, and for simplicity of notation denote $\sigma_{w_{i_j}}^{R'}$ by $\sigma_{i_j}^{R'}$. By the proof of \cite[Lemma~3.6]{CDR08}, these cones can be written as
\[
\sigma_{i_j}^{R'}=\tau_{i_j}'+\eta',
\]
where the cones $\tau_{i_j}'\subseteq\Delta_\mathbb{R}$ correspond to the faces $R_{i_j}$ of $R'$ and the cone $\eta'$, which satisfies $\eta'\cap\Delta_\mathbb{R}=\{0\}$, is the cone corresponding to the face $Q$ of $R'$ with vertices $w_{i_j}$. Let $R''$ be the convex hull of $R'$ and $w$. Note that $\sigma_w^R\subseteq\sigma_w^{R''}$ because $R''\subseteq R$. Also, any extremal ray of $\sigma_w^{R''}$ is an extremal ray of $\sigma_{i_j}^{R''}$ for some $j$. Since $\sigma_{i_j}^{R''}\subseteq\sigma_{i_j}^{R'}$ the extremal rays of $\sigma_w^{R''}$ are contained in $\cup_j\sigma_{i_j}^{R'}$, hence
\[
\sigma_w^R\subseteq\sigma_w^{R''}\subseteq\Conv(\cup_j\sigma_{i_j}^{R'})=\Conv(\cup_j(\tau_{i_j}'+\eta'))\subseteq\Delta_\mathbb{R}+\eta'.
\]
This implies that $q_\mathbb{R}(\sigma_w^R)\subseteq q_\mathbb{R}(\Delta_\mathbb{R}+\eta')=q_\mathbb{R}(\eta')$. Now let $j\in\{1,\ldots,\ell\}$ be arbitrary. As $X_{R'}\rightarrow Y$ is a toric morphism there exists $\sigma\in\Sigma_Y$ such that $q_\mathbb{R}(\sigma_{i_j}^{R'})\subseteq\sigma$. But $q_\mathbb{R}(\sigma_{i_j}^{R'})=q_\mathbb{R}(\eta')$, hence $q_\mathbb{R}(\sigma_w^R)\subseteq q_\mathbb{R}(\eta')\subseteq\sigma$. Therefore, we have a toric morphism $X_R\rightarrow Y$.
\end{proof}

\begin{remark}
The morphism $X_R\rightarrow Y$ arising from the generalized $\pi$-twisted Cayley sum $\pi\colon R\rightarrow F$ in Example~\ref{example-generaliz-Cayley-sum} has reducible fibers. To see this let $\Sigma_R$ be the fan in $N_\mathbb{R}\cong\mathbb{R}^2$ generated by the following cones:
\begin{gather*}
\sigma_1=\langle(1,0),(1,1)\rangle,~\sigma_2=\langle(1,1),(0,1)\rangle,~\sigma_3=\langle(0,1),(-1,0)\rangle,\\
\sigma_4=\langle(-1,0),(-1,-1)\rangle,~\sigma_5=\langle(-1,-1),(0,-1)\rangle,~\sigma_6=\langle(0,-1),(1,0)\rangle.
\end{gather*}
After identifying $N_\mathbb{R}/\Delta$ with $\mathbb{R}$ via $[(x,y)]\mapsto x-y$, we have that $\Sigma_Y=\{\mathbb{R}_{\geq0},\mathbb{R}_{\leq0},\{0\}\}$. Then
\[
q(\sigma_1)=q(\sigma_5)=q(\sigma_6)=\mathbb{R}_{\geq0},~q(\sigma_2)=q(\sigma_3)=q(\sigma_4)=\mathbb{R}_{\leq0}.
\]
From this we see that the fibers of $X_R\rightarrow Y$ over the torus fixed points corresponding to $\mathbb{R}_{\geq0}$ and $\mathbb{R}_{\leq0}$ are isomorphic to the gluing of two copies of $\mathbb{P}^1$ at a point.
\end{remark}


\section{Families over toric chow quotients}
\label{families-on-toric-Chow-quotients}


\subsection{The Chow quotient $X_P^m/\!/H$, the quotient fan, and the fiber polytope}
\label{construction-quotient-fan}

Let $V$ be a normal projective toric variety and let $H$ be a subtorus of its defining dense open torus $T$. The Chow quotient $V/\!/H$ is a projective compactification of the torus $T/H$ obtained as follows. Given $x\in T$, denote by $Z_x$ the cycle $\overline{H\cdot x}$ with the reduced scheme structure, where the closure is taken inside $V$. Then $V/\!/H$ is the Zariski closure in the Chow variety of $V$ of the subset of points corresponding to the cycles $Z_x$ for $x\in T$. By \cite{KSZ91}, the normalization of the Chow quotient $V/\!/H$ is also a toric variety, which is associated to the so called \emph{quotient fan}. We review the definition of such fan in the case $V=X_P^m$, where $X_P$ is the projective toric variety associated to a full dimensional lattice polytope $P\subseteq\mathbb{R}^d$, and $H\subseteq X_P$ is the dense open torus which we view diagonally embedded in $X_P^m$. The Chow quotient $X_P^m/\!/H$ is a compactification of the torus $H^m/H$, which parametrizes, up to $H$-action, configurations of $m$ points in $H$ (these can coincide).

Let $(\mathbb{Z}^d)^m$ be the lattice of one-parameter subgroups of $H^m$ and let $\mathbb{Z}^d\subseteq(\mathbb{Z}^d)^m$ be the diagonal, which corresponds to the diagonally embedded $H\subseteq H^m$. Tensoring with $\mathbb{R}$ gives the short exact sequence
\begin{equation}
\label{s-e-s-N-lattices-tensor-R}
0\rightarrow\mathbb{R}^d\xrightarrow{\;\delta\;}(\mathbb{R}^d)^m\xrightarrow{\;q\;}(\mathbb{R}^d)^m/\mathbb{R}^d\rightarrow0.
\end{equation}
Let $\Sigma_P$ be the normal fan to $P$. Then the quotient fan, which we denote by $\mathcal{Q}(P,m)$, is the common refinement of the images of the cones in $\Sigma_P^m$ through the quotient map $q$. By \cite[Theorem~2.1~(a)]{KSZ91} we know that
\[
X_{\mathcal{Q}(P,m)}\cong(X_P^m/\!/H)^\nu,
\]
where, for a scheme $X$, $X^\nu$ denotes the normalization of $X$ with the reduced scheme structure.

An important feature of the quotient fan $\mathcal{Q}(P,m)$ is due to Billera and Sturmfels \cite{BS92}: $\mathcal{Q}(P,m)$ is the normal fan to the \emph{fiber polytope} of the polytope fibration $\delta^*|_{P^m}\colon P^m\rightarrow mP$, where $\delta^*\colon(\mathbb{R}^d)^{m*}\rightarrow\mathbb{R}^{d*}$ is the surjection dual to the diagonal embedding $\delta\colon\mathbb{R}^d\hookrightarrow(\mathbb{R}^d)^m$ in \eqref{s-e-s-N-lattices-tensor-R}. Note that, under the natural identification $(\mathbb{R}^d)^{m*}\cong(\mathbb{R}^{d*})^m$, we have $\delta^*(f_1,\ldots,f_m)=f_1+\ldots+f_m$.

\begin{definition}
\label{def:polytope-Q(P,m)}
We denote by $Q(P,m)$ the fiber polytope normal to the quotient fan $\mathcal{Q}(P,m)$ and corresponding to the polarization $\mathscr{M}$ on $X_{\mathcal{Q}(P,m)}$ induced by the projective Chow quotient $X_P^m/\!/H$ (for the polarization on $X_P^m/\!/H$ we refer to \cite[\S\,0.1]{Kap93}). Additionally, we fix a linearization of $\mathscr{M}$, so that we can view $Q(P,m)\subseteq(\mathbb{R}^d)^m/\mathbb{R}^d$.
\end{definition}


\subsection{A toric family over $X_{\mathcal{Q}(P,m)}$}
\label{sec-toric-family-R(P,m)}

It is possible to construct a toric family over $X_{\mathcal{Q}(P,m)}$ that resolves the indeterminacies of the the rational map $X_P^m\dashrightarrow X_{\mathcal{Q}(P,m)}$. Consider the injection
\begin{align*}
\iota=\id\times q\colon(\mathbb{R}^d)^m&\hookrightarrow(\mathbb{R}^d)^m\times((\mathbb{R}^d)^m/\mathbb{R}^d),\\
\mathbf{v}&\mapsto(\mathbf{v},[\mathbf{v}]).
\end{align*}
Then define the fan $\mathcal{R}(P,m)$ as the restriction to $(\mathbb{R}^d)^m$ under $\iota$ of $\Sigma_P^m\times\mathcal{Q}(P,m)$:
\[
\mathcal{R}(P,m)=\iota^{-1}(\Sigma_P^m\times\mathcal{Q}(P,m))=\{q^{-1}(\xi)\cap\tau\mid\xi\in\mathcal{Q}(P,m)~\textrm{and}~\tau\in\Sigma_P^m\}.
\]
As a consequence of this definition, the toric variety $X_{\mathcal{R}(P,m)}$ fits in the following commutative diagram:
\begin{center}
\begin{tikzpicture}[>=angle 90]
\matrix(a)[matrix of math nodes,
row sep=2em, column sep=2em,
text height=1.5ex, text depth=0.25ex]
{X_{\mathcal{R}(P,m)}&X_P^m\times X_{\mathcal{Q}(P,m)}\\
X_P^m&X_{\mathcal{Q}(P,m)}.\\};
\path[right hook->] (a-1-1) edge node[]{}(a-1-2);
\path[->] (a-1-1) edge node[]{}(a-2-1);
\path[dashed,->] (a-2-1) edge node[below]{}(a-2-2);
\path[->] (a-1-2) edge node[]{}(a-2-2);
\end{tikzpicture}
\end{center}
As $X_{\mathcal{R}(P,m)}\hookrightarrow X_P^m\times X_{\mathcal{Q}(P,m)}$ is an embedding, $X_{\mathcal{R}(P,m)}$ is projective, so we can contruct a lattice polytope with normal fan $\mathcal{R}(P,m)$.

\begin{definition}
\label{def:polytope-R(P,m)}
Let $\mathscr{L}$ be the linearized ample line bundle on $X_P$ corresponding to the given lattice polytope $P\subseteq\mathbb{R}^d$. Let $\mathscr{M}$ be the linearized ample line bundle on $X_{Q(P,m)}$ in Definition~\ref{def:polytope-Q(P,m)}. Consider the projections $p_i\colon X_P^m\times X_{Q(P,m)}\rightarrow X_P$ onto the $i$-th factor for $i=1,\ldots,m$, and $p\colon X_P^m\times X_{Q(P,m)}\rightarrow X_{Q(P,m)}$. Let $\mathscr{A}$ be the ample line bundle on $X_P^m\times X_{Q(P,m)}$ given by
\[
p_1^*\mathscr{L}\otimes\ldots\otimes p_m^*\mathscr{L}\otimes p^*\mathscr{M}.
\]
Let $\mathscr{P}$ be the ample line bundle on $X_{\mathcal{R}(P,m)}$ given by the pullback of $\mathscr{A}$ under the embedding $X_{\mathcal{R}(P,m)}\hookrightarrow X_P^m\times X_{\mathcal{Q}(P,m)}$. Notice that $\mathscr{P}$ comes with a linearization induced by the ones of $\mathscr{L}$ and $\mathscr{M}$. We define $R(P,m)\subseteq(\mathbb{R}^d)^m$ to be the lattice polytope with normal fan $\mathcal{R}(P,m)$ corresponding to the linearized ample line bundle $\mathscr{P}$ on $X_{\mathcal{R}(P,m)}$.
\end{definition}

\begin{theorem}
\label{thm-R(P,m)-is-gen-twisted-Cayley-sum}
The polytope $R(P,m)$ is the generalized $\delta^*$-twisted Cayley sum of polytopes which are normally equivalent to $Q(P,m)$ and with $\delta^*(R(P,m))=mP$.
\end{theorem}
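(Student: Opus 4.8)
The plan is to unwind the definition of $\mathcal{R}(P,m)$ as a restriction of $\Sigma_P^m\times\mathcal{Q}(P,m)$ and track how the induced polarization from the embedding $X_{\mathcal{R}(P,m)}\hookrightarrow X_P^m\times X_{\mathcal{Q}(P,m)}$ translates into a polytope. Recall that a polarization obtained by restricting a product polarization corresponds to the Minkowski sum of the two pullback polytopes; here those are the polytope $P^m$ (pulled back from $X_P^m$, viewed in $(\mathbb{R}^{d*})^m$) and $Q(P,m)$ (pulled back from $X_{\mathcal{Q}(P,m)}$, living on the quotient $(\mathbb{R}^d)^m/\mathbb{R}^d$ side). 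So the first step is to identify $R(P,m)$ concretely as a fiber of, or a slice of, the Minkowski sum $P^m\times Q(P,m)$ living naturally in a space dual to $(\mathbb{R}^d)^m$, using the injection $\iota=\id\times q$. This should give $R(P,m)$ as sitting inside $(\mathbb{R}^{d*})^m$ with $\delta^*\colon(\mathbb{R}^{d*})^m\to\mathbb{R}^{d*}$ playing the role of $\pi_\mathbb{R}$, since $\delta^*$ is exactly dual to the diagonal $\delta$ that was quotiented out to form $\mathcal{Q}(P,m)$.

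Next I would compute the image $\delta^*(R(P,m))$. Since the $X_P^m$-factor of the embedding is the diagonal and the fiber polytope construction of Billera--Sturmfels identifies $Q(P,m)$ as the fiber polytope of $\delta^*|_{P^m}\colon P^m\to mP$, the composite projection $\delta^*$ on the $R(P,m)$ side should surject onto $mP$. Concretely, the fan $\mathcal{R}(P,m)$ refines $\Sigma_P^m$ in the $(\mathbb{R}^d)^m$-directions and refines $\mathcal{Q}(P,m)$ in the quotient directions, and these two refinements are ``transverse'' via $\iota$; dually this means the polytope $R(P,m)$ fibers over $mP$ via $\delta^*$ with the fibers controlled by $Q(P,m)$. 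I would pin down $\delta^*(R(P,m))=mP$ by checking that the support function of $R(P,m)$, restricted to the subspace $\mathbb{R}^d=\delta(\mathbb{R}^d)\subseteq(\mathbb{R}^d)^m$ (i.e. the directions killed by $q$), is exactly the support function of $mP$ — which follows because on those directions the $\mathcal{Q}(P,m)$-part of $\mathcal{R}(P,m)$ contributes nothing and only the diagonal copy of $\Sigma_P$ inside $\Sigma_P^m$ is seen, whose associated polytope along the diagonal is $mP$.

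The remaining step — and the one I expect to be the genuine content rather than bookkeeping — is to show that the fibers $R_i=\delta^*{}^{-1}(v_i)\cap R(P,m)$ over the vertices $v_i$ of $mP$ are all normally equivalent to $Q(P,m)$, and that the fibration $\delta^*|_{R(P,m)}\colon R(P,m)\to mP$ is precisely of the generalized $\pi$-twisted Cayley type of Definition~\ref{def-gen-pi-twisted-Cayley-sum}. A vertex $v_i$ of $mP$ is $mu_i$ for a vertex $u_i$ of $P$, and the corresponding maximal cone of $\Sigma_P$ sits diagonally in $\Sigma_P^m$; restricting $\mathcal{R}(P,m)$ to the preimage of that cone and passing to the quotient by $\delta$ recovers exactly $\mathcal{Q}(P,m)$ by the definition $\mathcal{R}(P,m)=\iota^{-1}(\Sigma_P^m\times\mathcal{Q}(P,m))$ — intersecting with a full-dimensional cone of $\Sigma_P^m$ leaves only the $\mathcal{Q}(P,m)$-refinement. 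Hence the normal fan of each $R_i$ is $\mathcal{Q}(P,m)$, giving normal equivalence; and since the $R_i$ sit over distinct lattice points $v_i=mu_i$ which are vertices of $F=mP$, all hypotheses of Definition~\ref{def-gen-pi-twisted-Cayley-sum} are met. I would also need to confirm the $R_i$ are genuine lattice polytopes (not merely rational), which should follow from $R(P,m)$ being a lattice polytope with $mP$ a lattice polytope and the fibration being over lattice points; the main care is ensuring the identification of the fiber normal fan with $\mathcal{Q}(P,m)$ is on the nose, i.e. the same lattice structure, which is where I'd be most careful about the duality between $\iota$ and $\delta^*$.
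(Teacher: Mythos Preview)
Your overall strategy matches the paper's: identify $R(P,m)$ via the polarization pulled back from $X_P^m\times X_{\mathcal{Q}(P,m)}$ (hence as a Minkowski sum), verify that $\delta^*$ maps it onto $mP$, and then analyze the fibers. The paper does this by writing the fiber over any $v\in mP$ explicitly as $s^{-1}(v)+q^*(Q(P,m))$, where $s=\delta^*|_{P^m}$, via a commutative diagram; your support-function sketch for $\delta^*(R(P,m))=mP$ would also work.

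There is, however, a genuine gap in your final step. You take the $R_i$ to be the fibers over the \emph{vertices} $v_i=mu_i$ of $mP$ only, and then assert that Definition~\ref{def-gen-pi-twisted-Cayley-sum} is satisfied. But that definition requires $R(P,m)=\Conv(R_1,\ldots,R_k)$, and this fails for vertex fibers alone. Already for $P=\Delta_1$, $m=2$: the vertex fibers over $0$ and $2$ are translates of the segment $q^*(Q(\Delta_1,2))$, and their convex hull is a parallelogram, whereas $R(\Delta_1,2)$ is the $2$-permutohedron, a hexagon. One must include the fiber over the interior lattice point $1$ as well. This is exactly why the theorem needs the \emph{generalized} Cayley sum, allowing $R_i$ over non-vertex lattice points of $F$.

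Once you allow non-vertex fibers, your normal-equivalence argument no longer applies: over a vertex $v_i$, the fiber $s^{-1}(v_i)$ is a single point, so trivially $s^{-1}(v_i)+q^*(Q(P,m))$ has the same normal fan as $Q(P,m)$; but over a general lattice point $v$, the polytope $s^{-1}(v)$ is positive-dimensional, and the normal fan of the Minkowski sum $s^{-1}(v)+q^*(Q(P,m))$ equals that of $q^*(Q(P,m))$ only if $\mathcal{Q}(P,m)$ refines the normal fan of $s^{-1}(v)$. The paper supplies this missing ingredient by invoking that the secondary fan is a \emph{fiber fan} in the sense of \cite[\S2.2]{CM07}, i.e.\ the normal fan of the fiber polytope $Q(P,m)$ refines the normal fan of every fiber of $s\colon P^m\to mP$. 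Your fan-restriction argument (intersecting with a full-dimensional cone of $\Sigma_P^m$) does not establish this for non-vertex fibers.
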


\begin{proof}
As $R(P,m)$ is the lattice polytope corresponding to the polarization induced by $X_{\mathcal{R}(P,m)}\hookrightarrow X_P^m\times X_{Q(P,m)}$, the first step is to explicitly describe $R(P,m)$ in terms of the lattice polytopes $P^m$ and $Q(P,m)$. In the short exact sequence \eqref{s-e-s-N-lattices-tensor-R}, we can factor the map $q$ as follows:
\[
0\rightarrow\mathbb{R}^d\xrightarrow{\;\delta\;}(\mathbb{R}^d)^m\xrightarrow{\;\iota=\id\times q\;}(\mathbb{R}^d)^m\times((\mathbb{R}^d)^m/\mathbb{R}^d)\xrightarrow{\;\pi_2\;}(\mathbb{R}^d)^m/\mathbb{R}^d\rightarrow0.
\]
This gives morphisms of fans $\Sigma_P\xrightarrow{\;\delta\;}\mathcal{R}(P,m)\xrightarrow{\;\iota\;}\Sigma_P^m\times\mathcal{Q}(P,m)$. Dualizing the above sequence and using the identification $((\mathbb{R}^d)^m\times((\mathbb{R}^d)^m/\mathbb{R}^d))^*\cong(\mathbb{R}^d)^{m*}\times((\mathbb{R}^d)^m/\mathbb{R}^d)^*$, we obtain
\[
0\rightarrow((\mathbb{R}^d)^m/\mathbb{R}^d)^*\xrightarrow{\;\pi_2^*\;}(\mathbb{R}^d)^{m*}\times((\mathbb{R}^d)^m/\mathbb{R}^d)^*\xrightarrow{\;\iota^*\;}(\mathbb{R}^d)^{m*}\xrightarrow{\;\delta^*\;}\mathbb{R}^{d*}\rightarrow0.
\]
Hence, the morphism $\iota^*$ sends $P^m\times Q(P,m)$ to the Minkowski sum of $P^m$ and $q^*(Q(P,m))$ (notice that $q^*$ embeds $Q(P,m)$ in $(\mathbb{R}^d)^m$ isomorphically). Additionally, observe that
\[
\iota^*(P^m\times Q(P,m))=R(P,m).
\]
The equality follows by \cite[Lemma~3.1.7]{HLY02} because we fixed compatible linearized ample line bundles on $X_{\mathcal{R}(P,m)}$ and $X_P^m\times X_{\mathcal{Q}(P,m)}$ in Definition~\ref{def:polytope-R(P,m)}. Therefore, we can consider the following diagram:
\begin{center}
\begin{tikzpicture}[>=angle 90]
\matrix(a)[matrix of math nodes,
row sep=2em, column sep=2em,
text height=1.5ex, text depth=0.25ex]
{P^m\times Q(P,m)&R(P,m)\\
P^m&mP,\\};
\path[->] (a-1-1) edge node[above]{$\iota^*$}(a-1-2);
\path[->] (a-1-1) edge node[left]{$p_1$}(a-2-1);
\path[->] (a-2-1) edge node[below]{$s$}(a-2-2);
\path[->] (a-1-2) edge node[right]{$\delta^*$}(a-2-2);
\end{tikzpicture}
\end{center}
where $p_1$ is the projection onto $P^m$ and $s$ is the componentwise sum given by $(x_1,\ldots,x_m)\mapsto x_1+\ldots+x_m$. The diagram is commutative because
\begin{align*}
(\delta^*\circ\iota^*)(x,y)&=\delta^*((\id\times q)^*(x,y))=\delta^*(x+q^*(y))=\delta^*(x)+\delta^*(q^*(y))\\
&=s(x)+(q\circ\delta)^*(y)=s(x)+0=(s\circ p_1)(x,y).
\end{align*}
By the commutativity of the above diagram, the fiber $R(P,m)_v\subseteq R(P,m)$ of $\delta^*$ over $v\in mP$ is equal to $\iota^*(s^{-1}(v)\times Q(P,m))=s^{-1}(v)+q^*(Q(P,m))$.

To conclude it will be enough to show that $R(P,m)_v=s^{-1}(v)+q^*(Q(P,m))$ is normally equivalent to $Q(P,m)$. It is enough to show that the normal fan to $Q(P,m)$ refines the normal fan to every fiber of $s\colon P^m\rightarrow mP$. This follows from the fact that the secondary fan is a fiber fan in the sense of \cite[\S\,2.2]{CM07}.
\end{proof}


\subsection{Combinatorial characterization of the cones in $\mathcal{Q}(P,m)$}

For $X_P=\mathbb{P}^2$, \cite{ST21} gives a combinatorial characterization of the cones in $\mathcal{Q}(P,m)$ in terms of convex subdivisions of $\mathbb{R}^2$. This interpretation generalizes to the case of an arbitrary toric variety $X_P$ as we will soon see. The starting point is the following description of the cones in $\mathcal{Q}(P,m)$, which is discussed in \cite[\S\,1]{KSZ91}. Given a vector $\mathbf{v}=(v_1,\ldots,v_m)\in(\mathbb{R}^d)^m$, define
\[
\Sigma_\mathbf{v}^m=\{\tau\in\Sigma^m\mid\tau\cap(\mathbf{v}+\mathbb{R}^d)\neq\emptyset\},
\]
where $\mathbf{v}+\mathbb{R}^d=\{(v_1+x,\ldots,v_m+x)\mid x\in\mathbb{R}^d\}$. For $\mathbf{v},\mathbf{w}\in(\mathbb{R}^d)^m$, the condition that $\Sigma_\mathbf{v}=\Sigma_\mathbf{w}$ defines an equivalence relation on $(\mathbb{R}^d)^m$, and the images under $q\colon(\mathbb{R}^d)^m\rightarrow(\mathbb{R}^d)^m/\mathbb{R}^d$ of the closure of the corresponding equivalence classes give the cones of the quotient fan $\mathcal{Q}(P,m)$. We now start with our alternative characterization of the cones in $\mathcal{Q}(P,m)$ in terms of convex subdivisions of $\mathbb{R}^d$.

\begin{definition}
\label{def:subdivision-Rd-with-spiders}
Let $P\subseteq\mathbb{R}^d$ be a full dimensional lattice polytope and denote the fan $\Sigma_P$ normal to $P$ by $\Sigma$. Given $v\in\mathbb{R}^d$ denote by $\Sigma(v)$ the fan in $\mathbb{R}^d$ obtained by centering in $-v$ the fan $\Sigma$. In other words, $\Sigma(v)=-v+\Sigma$. The choice of translating by $-v$ is made to be consistent with the conventions adopted in \cite{ST21}.

Given $\mathbf{v}=(v_1,\ldots,v_m)\in(\mathbb{R}^d)^m$, we denote by $\mathcal{S}(\mathbf{v})$ the convex subdivision of $\mathbb{R}^d$ obtained by taking the common refinement of the fans $\Sigma(v_1),\ldots,\Sigma(v_m)$. Note that, in general, the cells in $\mathcal{S}(\mathbf{v})$ may not be cones, and that they are not necessarily bounded. Given $\mathbf{v},\mathbf{w}\in(\mathbb{R}^d)^m$, we say that the subdivisions $\mathcal{S}(\mathbf{v}),\mathcal{S}(\mathbf{w})$ are \emph{equivalent} if given cones $\sigma_i\in\Sigma(v_i),\tau_i\in\Sigma(w_i)$ satisfying $\sigma_i+v_i=\tau_i+w_i$ for all $i$, then $\cap_i\sigma_i\neq\emptyset$ if and only if $\cap_i\tau_i\neq\emptyset$.
\end{definition}

The following lemma is an immediate generalization of \cite[Lemma~8.2]{ST21}.

\begin{lemma}
\label{bijection-cones-subdivisions}
The following statements hold:
\begin{itemize}

\item[(i)] Let $\mathbf{v}\in(\mathbb{R}^d)^m$. The restrictions to $\mathbf{v}+\mathbb{R}^d$ of the cones in $\Sigma_\mathbf{v}^m$ give the subdivision $\mathcal{S}(\mathbf{v})$ under the natural identification $\mathbf{v}+\mathbb{R}^d\cong\mathbb{R}^d$.

\item[(ii)] $[\mathbf{v}],[\mathbf{w}]\in(\mathbb{R}^d)^m/\mathbb{R}^d$ belong to the relative interior of the same cone in the quotient fan $\mathcal{Q}(P,m)$ if and only if $\mathcal{S}(\mathbf{v})$ and $\mathcal{S}(\mathbf{w})$ are equivalent.

\end{itemize}
\end{lemma}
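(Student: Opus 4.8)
The plan is to prove both statements by carefully unwinding the definitions of $\Sigma_\mathbf{v}^m$, $\mathcal{S}(\mathbf{v})$, and the quotient fan $\mathcal{Q}(P,m)$, following the template of \cite[Lemma~8.2]{ST21} but with $\mathbb{R}^2$ replaced by $\mathbb{R}^d$. The only nontrivial geometric ingredient is the bookkeeping of translates: a cone $\tau = \tau_1 \times \cdots \times \tau_m \in \Sigma^m$ meets $\mathbf{v} + \mathbb{R}^d = \{(v_1 + u, \ldots, v_m + u) \mid u \in \mathbb{R}^d\}$ exactly when there is a single $u \in \mathbb{R}^d$ with $v_i + u \in \tau_i$ for all $i$, i.e. $u \in \bigcap_i (\tau_i - v_i)$. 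Writing $\Sigma(v_i) = -v_i + \Sigma$, this says precisely that $\bigcap_i \sigma_i \ne \emptyset$ where $\sigma_i = \tau_i - v_i \in \Sigma(v_i)$. So the first move is to record this translation dictionary once and for all.

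For part (i), I would argue as follows. Fix $\mathbf{v}$ and identify $\mathbf{v} + \mathbb{R}^d \cong \mathbb{R}^d$ via $(v_1 + u, \ldots, v_m + u) \mapsto u$. Under this identification, for $\tau = \tau_1 \times \cdots \times \tau_m \in \Sigma^m$, the intersection $\tau \cap (\mathbf{v} + \mathbb{R}^d)$ corresponds to $\bigcap_i (\tau_i - v_i) = \bigcap_i \sigma_i$ with $\sigma_i \in \Sigma(v_i)$ the translated cone. Thus the nonempty restrictions of cones of $\Sigma_\mathbf{v}^m$ to $\mathbf{v} + \mathbb{R}^d$ are exactly the nonempty sets of the form $\bigcap_i \sigma_i$ with $\sigma_i \in \Sigma(v_i)$; these are by definition the cells of the common refinement $\mathcal{S}(\mathbf{v})$. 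A small point to check is that every cell of $\mathcal{S}(\mathbf{v})$ arises this way (not just the top-dimensional ones), which is immediate since a common refinement of fans consists precisely of the intersections $\bigcap_i \sigma_i$ as the $\sigma_i$ range over the respective fans, and $\Sigma_\mathbf{v}^m$ by definition collects exactly those products $\tau$ whose intersection with $\mathbf{v} + \mathbb{R}^d$ is nonempty.

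For part (ii), I would combine part (i) with the description of $\mathcal{Q}(P,m)$ from the Remark, namely that $[\mathbf{v}]$ and $[\mathbf{w}]$ lie in the relative interior of the same cone of $\mathcal{Q}(P,m)$ if and only if $\Sigma_\mathbf{v}^m = \Sigma_\mathbf{w}^m$. So it suffices to show that $\Sigma_\mathbf{v}^m = \Sigma_\mathbf{w}^m$ if and only if $\mathcal{S}(\mathbf{v})$ and $\mathcal{S}(\mathbf{w})$ are equivalent in the sense of Definition~\ref{def:subdivision-Rd-with-spiders}. Given a product cone $\tau = \tau_1 \times \cdots \times \tau_m \in \Sigma^m$, the membership $\tau \in \Sigma_\mathbf{v}^m$ translates (via part (i) and the dictionary above) to $\bigcap_i \sigma_i \ne \emptyset$ where $\sigma_i = \tau_i - v_i \in \Sigma(v_i)$, i.e. $\sigma_i + v_i = \tau_i$; likewise $\tau \in \Sigma_\mathbf{w}^m$ translates to $\bigcap_i \tau_i' \ne \emptyset$ with $\tau_i' + w_i = \tau_i$, so $\tau_i' \in \Sigma(w_i)$. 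Hence the correspondence between $\sigma_i$ and $\tau_i'$ through the common value $\tau_i$ is exactly the matching $\sigma_i + v_i = \tau_i' + w_i$ in the definition of equivalent subdivisions, and $\Sigma_\mathbf{v}^m = \Sigma_\mathbf{w}^m$ says precisely that for every such matched pair, $\bigcap_i \sigma_i \ne \emptyset \iff \bigcap_i \tau_i' \ne \emptyset$. This is the definition of equivalence, finishing the proof.

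I do not expect a serious obstacle here; the content is purely a change of coordinates plus matching definitions. The one place that requires care is making sure the identification $\mathbf{v} + \mathbb{R}^d \cong \mathbb{R}^d$ is used consistently on both sides in part (ii) so that the indexing of the matched cones $\sigma_i \leftrightarrow \tau_i'$ really does go through the shared ambient cone $\tau_i$ of $\Sigma$; once that is pinned down, everything is formal. Since the two-dimensional case is \cite[Lemma~8.2]{ST21} and nothing in that argument uses $d = 2$, it is legitimate to present this as an immediate generalization, spelling out only the translation dictionary and the reduction to the Remark's description of $\mathcal{Q}(P,m)$.
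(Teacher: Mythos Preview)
Your proposal is correct and follows essentially the same approach as the paper: both establish the translation dictionary $\tau \cap (\mathbf{v}+\mathbb{R}^d) \leftrightarrow \bigcap_i(\tau_i - v_i)$ for part~(i), and then for part~(ii) both reduce to the characterization $\Sigma_\mathbf{v}^m = \Sigma_\mathbf{w}^m$ from the Remark and observe that this is exactly the definition of equivalence of $\mathcal{S}(\mathbf{v})$ and $\mathcal{S}(\mathbf{w})$. Your write-up is somewhat more explicit in spelling out the dictionary up front, but the mathematical content is identical.
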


\begin{proof}
If $C\in\mathcal{S}(\mathbf{v})$ is a nonempty cell, then $C=\cap_{i=1}^m\sigma_i$ for some cones $\sigma_i\in\Sigma(v_i)$, $i=1,\ldots,m$. Define
\[
\tau=(v_1+\sigma_1)\times\ldots\times(v_m+\sigma_m)\in\Sigma^m.
\]
Note that $\mathbf{v}+C=(\mathbf{v}+\mathbb{R}^d)\cap\tau$, where $\mathbf{v}+C=\{(v_1+x,\ldots,v_m+x)\mid x\in C\}$. Indeed, an element $\mathbf{w}$ in $\mathbf{v}+C$ can be written as $(v_1+x,\ldots,v_m+x)$ for some $x\in C=\cap_{i=1}^m\sigma_i$. This is equivalent to $v_i+x\in(v_i+\sigma_i)$ for all $i$, which means $\mathbf{w}\in(\mathbf{v}+\mathbb{R}^d)\cap\tau$. As $C\neq\emptyset$, the equality $\mathbf{v}+C=(\mathbf{v}+\mathbb{R}^d)\cap\tau$ guarantees that $\tau\in\Sigma_\mathbf{v}^m$. This discussion proves (i) as it shows that any nonempty cell $C$ arises by intersecting $\mathbf{v}+\mathbb{R}^d$ with a cone in $\Sigma_\mathbf{v}^m$, and conversely that any cone $\tau\in\Sigma_\mathbf{v}^m$ restricts to $\mathbf{v}+\mathbb{R}^d$ giving the cell $\cap_{i=1}^m(-v_i+\tau_i)\in\mathcal{S}(\mathbf{v})$.

Let $\eta\in\Sigma^m$ and let $\sigma_i=\eta_i-v_i$. From part (i), we know that $\mathbf{v}+\cap_i\sigma_i=(\mathbf{v}+\mathbb{R}^d)\cap\eta$. Therefore, $\eta\in\Sigma_\mathbf{v}^m$ if and only if $\cap_i\sigma_i\neq\emptyset$. This shows the claimed equivalence in (ii).
\end{proof}


\subsection{Sections and forgetful morphisms}

We now interpret $X_{\mathcal{R}(P,m)}\rightarrow X_{\mathcal{Q}(P,m)}$ as a family $(m+k)$-pointed toric varieties, where $k$ is the number of torus fixed points of $X_P$. To do this, we need to construct sections for $X_{\mathcal{R}(P,m)}\rightarrow X_{\mathcal{Q}(P,m)}$. The proposition that follows is a generalization of \cite[Proposition~8.9]{ST21}, which was written for $X_P=\mathbb{P}^2$, and the proof is analogous if we use the combinatorial interpretation of the cones in $\mathcal{Q}(P,m)$. Here we propose an alternative direct proof.

\begin{proposition}
\label{sections-generalized}

The following statements hold.

\begin{itemize}

\item[(i)] Let $x_1,\ldots,x_k\in X_P$ be the torus fixed points and for $i\in\{1,\ldots,k\}$ define
\begin{align*}
t_i\colon X_{\mathcal{Q}(P,m)}&\rightarrow X_P^m\times X_{\mathcal{Q}(P,m)},\\
x&\mapsto(x_i,\ldots,x_i,x).
\end{align*}
Then $t_i$ factors through the inclusion $X_{\mathcal{R}(P,m)}\subseteq X_P^m\times X_{\mathcal{Q}(P,m)}$ giving a section of $X_{\mathcal{R}(P,m)}\rightarrow X_{\mathcal{Q}(P,m)}$.

\item[(ii)] For $j\in\{1,\ldots,m\}$, consider the linear map
\begin{align*}
\lambda_j\colon(\mathbb{R}^d)^m/\mathbb{R}^d&\rightarrow(\mathbb{R}^d)^m,\\
[(v_1,\ldots,v_m)]&\mapsto(v_1-v_j,\ldots,v_m-v_j).
\end{align*}
Then $\lambda_j$ induces a morphism of toric varieties $\ell_j\colon X_{\mathcal{Q}(P,m)}\rightarrow X_{\mathcal{R}(P,m)}$ giving a section of $X_{\mathcal{R}(P,m)}\rightarrow X_{\mathcal{Q}(P,m)}$.

\end{itemize}

\end{proposition}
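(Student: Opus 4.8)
The plan is to verify, in each case, the two things that need checking: first, that the stated map actually lands inside $X_{\mathcal{R}(P,m)}\subseteq X_P^m\times X_{\mathcal{Q}(P,m)}$ (as opposed to merely the ambient product), and second, that composing with the projection to $X_{\mathcal{Q}(P,m)}$ yields the identity. Both claims are ultimately combinatorial statements about the defining fan $\mathcal{R}(P,m)=\{q^{-1}(\xi)\cap\tau\mid\xi\in\mathcal{Q}(P,m),\ \tau\in\Sigma_P^m\}$, so I would work at the level of the lattices and fans in the short exact sequence \eqref{s-e-s-N-lattices-tensor-R} and its $\iota$-factorization.

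For part~(i), the morphism $t_i$ is a toric morphism associated to a lattice map of the form $\mathbb{R}^d\to(\mathbb{R}^d)^m\times((\mathbb{R}^d)^m/\mathbb{R}^d)$ (the toric variety $X_{\mathcal{Q}(P,m)}$ has cocharacter lattice $(\mathbb{Z}^d)^m/\mathbb{Z}^d$, but the point $x_i$ being a torus-fixed point means the first $m$ components are "constant" and the map factors through a fixed cone $\sigma_i$ of $\Sigma_P$ in each factor). The key point is that the constant map $x\mapsto(x_i,\dots,x_i)$ into $X_P^m$ corresponds, at the fan level, to sending everything into the maximal cone $\sigma_i\times\cdots\times\sigma_i\in\Sigma_P^m$ normal to the vertex corresponding to $x_i$. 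Then for any cone $\xi\in\mathcal{Q}(P,m)$ we automatically have $q^{-1}(\xi)\cap(\sigma_i\times\cdots\times\sigma_i)$ is a cone of $\mathcal{R}(P,m)$ containing the image of the relevant cone of $\Sigma_{\mathcal{Q}(P,m)}$, which is exactly the condition for $t_i$ to factor through $X_{\mathcal{R}(P,m)}$. That $\pi_2\circ t_i$ is the identity is immediate since $t_i$ is $x\mapsto(x_i,\dots,x_i,x)$ and the projection forgets the first $m$ coordinates. The reason $\sigma_i\times\cdots\times\sigma_i$ meets every cone of the form $q^{-1}(\xi)$ is that $\mathbf{v}=0$ (or any diagonal vector) lies in its relative interior and $q^{-1}(\xi)$ always contains the diagonal $\mathbb{R}^d$; more carefully one observes that the diagonal line is contained in every $q^{-1}(\xi)$, so it meets the full-dimensional cone $\sigma_i^m$ in the diagonal-translate sense needed.

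For part~(ii), the map $\lambda_j$ is explicitly a splitting of the quotient $q\colon(\mathbb{R}^d)^m\to(\mathbb{R}^d)^m/\mathbb{R}^d$: indeed $q\circ\lambda_j=\mathrm{id}$ because $[(v_1-v_j,\dots,v_m-v_j)]=[(v_1,\dots,v_m)]$, the two differing by the diagonal vector $(v_j,\dots,v_j)$. Composed with $\iota=\mathrm{id}\times q$ this gives $[\mathbf{v}]\mapsto(\lambda_j([\mathbf{v}]),[\mathbf{v}])$, so $\pi_2\circ\iota\circ\lambda_j=\mathrm{id}$, which is the section property. The real content is that $\lambda_j$ is a morphism of fans $\mathcal{Q}(P,m)\to\mathcal{R}(P,m)$, i.e.\ that for each cone $\xi\in\mathcal{Q}(P,m)$ there is a cone of $\mathcal{R}(P,m)$ containing $\lambda_{j,\mathbb{R}}(\xi)$. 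Here I would use Lemma~\ref{bijection-cones-subdivisions}: a point $[\mathbf{v}]$ in the relative interior of $\xi$ determines the subdivision $\mathcal{S}(\mathbf{v})$, and the representative $\lambda_j([\mathbf{v}])=(v_1-v_j,\dots,v_m-v_j)$ has $j$-th coordinate $0$, so it lies in the closed cone $\sigma\times\cdots\times\mathbb{R}^d\text{-shift}\cdots$ — more precisely, one checks that $\lambda_j([\mathbf{v}])$ lands in the cone $\tau\in\Sigma_P^m$ whose $i$-th factor is the cone of $\Sigma_P$ containing $v_i-v_j$, and that this $\tau$ together with $\xi$ produces the cone $q^{-1}(\xi)\cap\tau\in\mathcal{R}(P,m)$ containing $\lambda_{j,\mathbb{R}}(\xi)$. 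Since $\lambda_j$ is the honest section $\mathbf{v}\mapsto\mathbf{v}-(\text{diagonal of }v_j)$, it carries the fiber $\mathbf{v}+\mathbb{R}^d$ to itself and hence respects the restriction description of $\mathcal{R}(P,m)$ in Lemma~\ref{bijection-cones-subdivisions}(i) essentially tautologically.

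The main obstacle I anticipate is purely bookkeeping: matching the cocharacter/character lattice conventions so that the fan-morphism condition "$\lambda_{j,\mathbb{R}}$ maps each cone of $\mathcal{Q}(P,m)$ into a cone of $\mathcal{R}(P,m)$" is verified cleanly rather than case-by-case on the (possibly non-simplicial) cones of the quotient fan. The clean route is to avoid enumerating cones and instead argue directly from the restriction description: $\mathcal{R}(P,m)$ is, fiberwise over $mP$ (equivalently, over the diagonal $\mathbb{R}^d$), nothing but $\Sigma_P^m$ restricted to affine slices, and both $t_i$ and $\lambda_j$ are defined so as to respect these slices — $t_i$ by collapsing to a single fixed cone, $\lambda_j$ by being a linear section of $q$. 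I would phrase the whole argument in that language to keep it short and to make the "analogous to \cite[Proposition~8.9]{ST21}" remark precise.
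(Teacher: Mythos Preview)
Your outline for (ii) has the right shape but skips the one non-formal step. You write that for $[\mathbf{v}]\in\xi^\circ$ one takes $\tau\in\Sigma_P^m$ with $i$-th factor the cone of $\Sigma_P$ containing $v_i-v_j$, and then $\lambda_{j,\mathbb{R}}(\xi)\subseteq q^{-1}(\xi)\cap\tau$. The containment in $q^{-1}(\xi)$ is immediate from $q\circ\lambda_j=\mathrm{id}$, but the containment $\lambda_{j,\mathbb{R}}(\xi)\subseteq\tau$ requires that \emph{every} $[\mathbf{w}]\in\xi^\circ$ satisfy $\lambda_j([\mathbf{w}])\in\tau$, i.e.\ that $\tau$ does not depend on the choice of $[\mathbf{v}]$. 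This is not tautological and is where the actual work lies. The paper supplies the missing mechanism: because the $j$-th coordinate of $\lambda_j([\mathbf{v}])$ vanishes one may choose $\tau_j=\{0\}$; then $\lambda_j([\mathbf{v}])\in\tau$ says $\tau\cap(\mathbf{v}+\mathbb{R}^d)\neq\emptyset$, so $\tau\in\Sigma_\mathbf{v}^m$. Since $[\mathbf{v}],[\mathbf{w}]\in\xi^\circ$ forces $\Sigma_\mathbf{v}^m=\Sigma_\mathbf{w}^m$, also $\tau\in\Sigma_\mathbf{w}^m$, so $\mathbf{w}+\delta\in\tau$ for some $\delta\in\mathbb{R}^d$; the constraint $\tau_j=\{0\}$ then pins down $\delta=-w_j$, whence $\lambda_j([\mathbf{w}])\in\tau$. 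Your appeal to Lemma~\ref{bijection-cones-subdivisions} and the phrase ``essentially tautologically'' do not substitute for this argument.

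For (i) your framing is off: $t_i$ is \emph{not} a toric morphism, since $(x_i,\dots,x_i)$ lies outside the dense torus of $X_P^m$, so there is no underlying lattice map to invoke. The paper argues geometrically instead: it suffices to check $\{(x_i,\dots,x_i)\}\times(H^m/H)\subseteq X_{\mathcal{R}(P,m)}$, and over a point of $H^m/H$ the fiber of $f$ is an $H^m$-translate of the diagonal $X_P\subseteq X_P^m$, which contains $(x_i,\dots,x_i)$ because each $x_i$ is torus-invariant. Your observation about $\sigma_i^m$ can be salvaged as an orbit-closure argument (the diagonal cone $q^{-1}(0)\cap\sigma_i^m$ is in $\mathcal{R}(P,m)$ and its image under $\iota$ meets the relative interior of $\sigma_i^m\times\{0\}$, so the orbit closure $\{(x_i,\dots,x_i)\}\times X_{\mathcal{Q}(P,m)}$ lies in $X_{\mathcal{R}(P,m)}$), but that is a different statement from the fan-morphism condition you invoke, and your write-up conflates the two.
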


\begin{proof}
To prove (i), we need to show that $\{(x_i,\ldots,x_i)\}\times X_{\mathcal{Q}(P,m)}$ is contained in $X_{\mathcal{R}(P,m)}$. So it is enough to observe that $\{(x_i,\ldots,x_i)\}\times(H^m/H)\subseteq X_{\mathcal{R}(P,m)}$. But the fiber over a point $x$ in the torus $H^m/H$ is a translate by torus action of the diagonally embedded $X_P\subseteq X_P^m$, and therefore such fiber will contain the point $(x_i,\ldots,x_i,x)$ as $x_i$ is torus-invariant.

To prove (ii), let $[\mathbf{v}],[\mathbf{w}]\in(\mathbb{R}^d)^m/\mathbb{R}^d$ in the relative interior of the same cone in $\mathcal{Q}(P,m)$. We want to show that $\lambda_j([\mathbf{v}]),\lambda_j([\mathbf{w}])$ belong to the same cone in $\mathcal{R}(P,m)$. So it suffices to show that they are in the same cone in $\Sigma^m$. We have that $\lambda([\mathbf{v}])=(v_1-v_j,\ldots,v_m-v_j)$, and as $\Sigma^m$ is a complete fan, there exists $\tau=\tau_1\times\ldots\times\tau_m\in\Sigma^m$ such that $(v_1-v_j,\ldots,v_m-v_j)\in\tau$. Moreover, as the $j$-th coordinate of $(v_1-v_j,\ldots,v_m-v_j)$ is zero, we can impose that $\tau_j=0$. As $\tau\cap(\mathbf{v}+\mathbb{R}^d)\neq\emptyset$, we have that $\tau\in\Sigma_\mathbf{v}^m$. Since $[\mathbf{v}],[\mathbf{w}]$ are in the relative interior of the same cone of $\mathcal{Q}(P,m)$, $\Sigma_\mathbf{v}^m=\Sigma_\mathbf{w}^m$, and hence $\tau\in\Sigma_\mathbf{w}^m$. Therefore, $\tau\cap(\mathbf{w}+\mathbb{R}^d)\neq\emptyset$, which means that there exists $\delta\in\mathbb{R}^d$ such that $(w_1+\delta,\ldots,w_m+\delta)\in\tau$. Recall that $\tau_j=0$, which means that $\delta=-w_j$, implying that $(w_1-w_j,\ldots,w_m-w_j)\in\tau$. This proves that $\lambda_j([\mathbf{v}])$ and $\lambda_j([\mathbf{w}])$ are in the same cone $\tau\in\Sigma^m$.
\end{proof}

Finally, the next proposition generalizes \cite[Lemma~8.6]{ST21}. The proof is completely analogous, so we omit it.

\begin{proposition}
\label{forgetful-maps}
Let $i\in\{1,\ldots,m+1\}$ and let $p_i\colon(\mathbb{R}^d)^{m+1}\rightarrow(\mathbb{R}^d)^m$ be the projection
\[
(v_1,\ldots,v_{m+1})\mapsto(v_1,\ldots,\widehat{v}_i,\ldots,v_{m+1}),
\]
Then the induced map $\overline{p}_i\colon(\mathbb{R}^d)^{m+1}/\mathbb{R}^d\rightarrow(\mathbb{R}^d)^m/\mathbb{R}^d$ gives a toric `forgetful' morphism $f_i\colon X_{\mathcal{Q}(P,m+1)}\rightarrow X_{\mathcal{Q}(P,m)}$.
\end{proposition}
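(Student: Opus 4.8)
The plan is to show that the linear projection $p_i$ on lattices of one-parameter subgroups descends to the quotients by the diagonal $\mathbb{R}^d$ and is compatible with the relevant fans, so that it induces a toric morphism. First I would observe that for each $i$ the projection $p_i\colon(\mathbb{R}^d)^{m+1}\rightarrow(\mathbb{R}^d)^m$ sends the diagonal $\mathbb{R}^d\subseteq(\mathbb{R}^d)^{m+1}$ onto the diagonal $\mathbb{R}^d\subseteq(\mathbb{R}^d)^m$ surjectively; hence it induces a well-defined linear map $\overline{p}_i\colon(\mathbb{R}^d)^{m+1}/\mathbb{R}^d\rightarrow(\mathbb{R}^d)^m/\mathbb{R}^d$ on the quotients, compatible with the quotient maps $q$ in the obvious commutative square. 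By the standard correspondence between toric morphisms and maps of fans (e.g.\ \cite[Theorem~3.3.4]{CLS}), it then suffices to check that $\overline{p}_i$ is a map of fans from $\mathcal{Q}(P,m+1)$ to $\mathcal{Q}(P,m)$: every cone of $\mathcal{Q}(P,m+1)$ must be carried by $\overline{p}_i$ into some cone of $\mathcal{Q}(P,m)$.

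To verify this I would use the combinatorial description of the cones of the quotient fan given in Lemma~\ref{bijection-cones-subdivisions} and the preceding remark. Concretely, fix $\mathbf{v}=(v_1,\ldots,v_{m+1})\in(\mathbb{R}^d)^{m+1}$ and let $\Sigma_\mathbf{v}^{m+1}=\{\tau\in\Sigma^{m+1}\mid\tau\cap(\mathbf{v}+\mathbb{R}^d)\neq\emptyset\}$; the closure of the image under $q$ of the equivalence class of $\mathbf{v}$ is the corresponding cone of $\mathcal{Q}(P,m+1)$. Set $\mathbf{v}'=p_i(\mathbf{v})\in(\mathbb{R}^d)^m$, which determines a cone of $\mathcal{Q}(P,m)$ via $\Sigma_{\mathbf{v}'}^m$. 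The key point is that if $\tau=\tau_1\times\cdots\times\tau_{m+1}\in\Sigma_\mathbf{v}^{m+1}$, so that there exists $x\in\mathbb{R}^d$ with $v_j+x\in\tau_j$ for all $j$, then that same $x$ witnesses that the product cone $\tau'=\tau_1\times\cdots\times\widehat{\tau_i}\times\cdots\times\tau_{m+1}$ lies in $\Sigma_{\mathbf{v}'}^m$; equivalently, in the language of the subdivisions $\mathcal{S}(\cdot)$, deleting the $i$-th spider $\Sigma(v_i)$ only coarsens the common refinement, so a nonempty cell of $\mathcal{S}(\mathbf{v})$ is contained in a nonempty cell of $\mathcal{S}(\mathbf{v}')$. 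This monotonicity is precisely what is needed to conclude that $\overline{p}_i$ maps the cone of $\mathcal{Q}(P,m+1)$ indexed by $\mathbf{v}$ into the cone of $\mathcal{Q}(P,m)$ indexed by $\mathbf{v}'$, and it also shows that equivalent vectors $\mathbf{v},\mathbf{w}$ have images $\mathbf{v}',\mathbf{w}'$ in the same cone, so the assignment is well defined on cones.

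I expect the main (though modest) obstacle to be bookkeeping about the quotient by $\mathbb{R}^d$: one must be careful that the relation $\sigma_i+v_i=\tau_i+w_i$ used to compare subdivisions descends correctly under forgetting a coordinate, and that the witness vector $x$ used for $\Sigma_\mathbf{v}^{m+1}$ really does survive to a witness for $\Sigma_{\mathbf{v}'}^m$ after projecting — this is where one uses that $p_i$ is a coordinate projection and not merely any surjection. Once the map-of-fans condition is in place, the existence of the toric morphism $f_i\colon X_{\mathcal{Q}(P,m+1)}\rightarrow X_{\mathcal{Q}(P,m)}$ is automatic, and it is evidently equivariant for the quotient tori $H^{m+1}/H\rightarrow H^m/H$, so it deserves the name \emph{forgetful morphism}. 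Since the argument is a verbatim adaptation of \cite[Lemma~8.6]{ST21} with $\mathbb{P}^2$ replaced by $X_P$ and $\mathbb{R}^2$ by $\mathbb{R}^d$, and all the needed combinatorial lemmas have already been generalized above, I would simply invoke that analogy rather than reproduce the details.
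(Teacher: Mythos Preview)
Your proposal is correct and matches the paper's own treatment: the paper simply states that the proof is completely analogous to \cite[Lemma~8.6]{ST21} and omits it, which is exactly the conclusion you reach after your (sound) sketch of the map-of-fans verification via the combinatorial description in Lemma~\ref{bijection-cones-subdivisions}. The one small point you leave implicit---that to show $\Sigma_{\mathbf{v}'}^m=\Sigma_{\mathbf{w}'}^m$ from $\Sigma_\mathbf{v}^{m+1}=\Sigma_\mathbf{w}^{m+1}$ one lifts any $\tau'\in\Sigma_{\mathbf{v}'}^m$ to some $\tau\in\Sigma_\mathbf{v}^{m+1}$ using completeness of $\Sigma$---is routine and is handled in the cited lemma.
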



\subsection{Flatness and reducedness of the fibers of $X_{\mathcal{R}(P,m)}\rightarrow X_{\mathcal{Q}(P,m)}$}
\label{flatness-and-non-reduced-fibers-example}

For a morphism of toric varieties, flatness and reducedness of the fibers is equivalent to Molcho's weak semistability in \cite{Mol21}. This is the key recurring idea in the results that follow. In particular, Theorem~\ref{thm-criterion-for-reduced-fibers} and Theorem~\ref{P-simplex-flat-reduced-fibers} translate weak semistability into respectively a criterion and an example of flatness and reducedness of the fibers for $X_{\mathcal{R}(P,m)}\rightarrow X_{\mathcal{Q}(P,m)}$.

\begin{proposition}
\label{toric-family-is-equidimensional}
The toric morphism $f\colon X_{\mathcal{R}(P,m)}\rightarrow X_{\mathcal{Q}(P,m)}$ is equidimensional. Moreover, if $X_{\mathcal{Q}(P,m)}$ is smooth or $f$ has reduced fibers, then $f$ is flat.
\end{proposition}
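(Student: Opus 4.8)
The plan is to establish the two assertions separately, using the combinatorial structure of $\mathcal{R}(P,m)$ relative to $\mathcal{Q}(P,m)$ provided by the factorization $\Sigma_P \xrightarrow{\delta} \mathcal{R}(P,m) \xrightarrow{\iota} \Sigma_P^m \times \mathcal{Q}(P,m)$ and the description $\mathcal{R}(P,m) = \{q^{-1}(\xi) \cap \tau \mid \xi \in \mathcal{Q}(P,m),\ \tau \in \Sigma_P^m\}$. For equidimensionality, the key point is that the map $q \colon (\mathbb{R}^d)^m \to (\mathbb{R}^d)^m/\mathbb{R}^d$ sends each cone of $\mathcal{R}(P,m)$ onto a cone of $\mathcal{Q}(P,m)$, and that this map of fans has the property that every maximal cone $\sigma$ of $\mathcal{R}(P,m)$ maps onto a maximal cone of $\mathcal{Q}(P,m)$ \emph{with $\ker q_\mathbb{R} \subseteq \mathbb{R}\sigma$ of constant dimension $d$}. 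Concretely, $\sigma = q^{-1}(\xi)\cap\tau$ for a maximal $\xi$ and a maximal $\tau \in \Sigma_P^m$, and since $\Sigma_P^m$ is complete the intersection $q^{-1}(\xi)\cap\tau$ is full dimensional inside $q^{-1}(\xi)$; hence $\dim\sigma = \dim\xi + d$ and $q_\mathbb{R}(\sigma) = \xi$. A toric morphism with this property — each cone surjecting onto a cone of the target, and fibers of the map of lattices meeting each cone in a cone of fixed dimension — is equidimensional; I would cite the standard criterion (e.g. the combinatorial description of fibers of toric morphisms in Cox–Little–Schenck, or argue directly that the fiber over the distinguished point of the orbit $O(\xi)$ is the toric variety of the fan $\{\sigma/\mathbb{R}\xi' : \sigma \mapsto \xi\}$ which is pure of dimension $d$).

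For the flatness statement, I would invoke the ``miracle flatness'' criterion: a morphism $f \colon X \to Y$ with $X$ Cohen–Macaulay, $Y$ regular, and all fibers of the expected dimension $\dim X - \dim Y$ is flat. Here $X_{\mathcal{R}(P,m)}$ is a normal toric variety, hence Cohen–Macaulay, and equidimensionality of $f$ (just established) gives that all fibers have dimension exactly $d = \dim X_{\mathcal{R}(P,m)} - \dim X_{\mathcal{Q}(P,m)}$. So if $X_{\mathcal{Q}(P,m)}$ is smooth, miracle flatness applies directly. For the second case — $f$ has reduced fibers but $Y = X_{\mathcal{Q}(P,m)}$ need not be smooth — miracle flatness does not literally apply, so I would instead use the toric criterion for flatness: a toric morphism $f \colon X_\Sigma \to X_{\Sigma'}$ induced by $\overline{q}\colon N \to N'$ is flat if and only if for every cone $\sigma \in \Sigma$, the image $\overline{q}(\sigma)$ lies in some cone $\sigma' \in \Sigma'$ and $\sigma$ surjects onto $\sigma'$... — more precisely I would appeal to the fact that flatness of $f$ is equivalent to flatness of each affine piece $\operatorname{Spec}\Bbbk[\sigma^\vee \cap M] \to \operatorname{Spec}\Bbbk[(\sigma')^\vee \cap M']$, which in turn is controlled by whether the semigroup map is ``flat'' in the monoidal sense. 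The cleanest route is: equidimensional $+$ reduced fibers $+$ $X$ and the generic fiber Cohen–Macaulay (all true here, as fibers are toric) forces flatness by a result on families whose fibers are all Cohen–Macaulay of the same dimension over a reduced (indeed normal) base — one can reduce to the base being a DVR via the valuative criterion of flatness and use that reducedness plus equidimensionality of the special fiber over a regular one-dimensional base implies flatness (no embedded or lower-dimensional components can appear).

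The main obstacle I expect is the second half of the flatness claim. Over a general (possibly singular) base, equidimensionality alone is not enough for flatness — one genuinely needs an extra hypothesis, and ``reduced fibers'' must be leveraged correctly. My plan is to combine reducedness of the fibers with the fact that $X_{\mathcal{R}(P,m)}$ is normal and the base $X_{\mathcal{Q}(P,m)}$ is normal, then argue locally: after restricting to a smooth open subset $U \subseteq X_{\mathcal{Q}(P,m)}$ (whose complement has codimension $\geq 2$ since the base is normal), $f$ is flat over $U$ by miracle flatness; flatness over all of $X_{\mathcal{Q}(P,m)}$ then follows because the non-flat locus is closed and, were it nonempty, would meet $U$ — unless it is concentrated in codimension $\geq 2$, at which point I would use that $X_{\mathcal{R}(P,m)}$ has no associated primes over the codimension-$\geq 2$ locus together with reducedness of those fibers to rule it out, via the local criterion of flatness applied along curves through the bad points. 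I would double-check whether the paper's later Theorem~\ref{thm-criterion-for-reduced-fibers} is meant to be logically independent of this proposition; if so, I must make sure the argument here does not circularly invoke it. If a fully self-contained toric argument is desired, the alternative is to unwind the semigroup-algebra maps on each affine chart and verify the flatness of $\Bbbk[(\sigma')^\vee\cap M'] \hookrightarrow \Bbbk[\sigma^\vee\cap M]$ by exhibiting an explicit basis of the target as a module over the source, which reduced-fiber-ness guarantees is free.
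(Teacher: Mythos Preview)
Your treatment of equidimensionality and of the smooth-base case is essentially the paper's: the paper computes directly that for $\eta=q^{-1}(\xi)\cap\tau\in\mathcal{R}(P,m)$ one has $q(\eta)=\xi\cap q(\tau)\in\mathcal{Q}(P,m)$ (using that $\mathcal{Q}(P,m)$ is by definition the common refinement of the $q(\tau)$), and then invokes miracle flatness \cite[Theorem~23.1]{Mat89} with the Cohen--Macaulayness of normal toric varieties. Your version of equidimensionality is slightly heavier (dimension-counting on maximal cones) but reaches the same conclusion.

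The reduced-fibers case, however, has a genuine gap. None of the routes you sketch actually closes. There is no ``valuative criterion of flatness'' allowing reduction to a DVR base; flatness is not detected by pullback along maps from DVRs. The codimension-$\geq2$ argument fails for the reason you yourself flag: the non-flat locus can perfectly well sit inside the singular locus of the base, and knowing flatness over the smooth open $U$ says nothing about the fibers over $X_{\mathcal{Q}(P,m)}\setminus U$; the subsequent appeal to ``no associated primes over the codimension-$\geq2$ locus together with reducedness'' is not a theorem. The claim that ``equidimensional $+$ reduced CM fibers over a normal base forces flatness'' is also not a standard result once the base is singular. Your final option (exhibit a free module basis on each affine chart) could in principle be made to work but is not carried out.

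The paper instead routes the argument through the \emph{weakly semistable} formalism: an equidimensional toric morphism with reduced fibers satisfies, by \cite[Lemma~5.2 and Remark~5.3]{AK00}, the lattice-surjectivity condition making it weakly semistable in the sense of \cite[Definition~2.1.2]{Mol21}, and weakly semistable morphisms are flat by \cite[Theorem~2.1.4]{Mol21}. This is a two-citation argument with no further work, and it is what you are missing. (This is also logically consistent with Theorem~\ref{thm-criterion-for-reduced-fibers}, which in the paper invokes the same references in the converse direction.)
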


\begin{proof}
Let $\eta\in\mathcal{R}(P,m)$. Then $\eta=q^{-1}(\xi)\cap\tau$ for some $\xi\in\mathcal{Q}(P,m)$ and $\tau\in\Sigma^m$. It follows that
\[
q(\eta)=q(q^{-1}(\xi)\cap\tau)=\xi\cap q(\tau).
\]
Notice that $\xi\cap q(\tau)\in\mathcal{Q}(P,m)$ from the definition of $\mathcal{Q}(P,m)$ as common refinement of the images under $q\colon(\mathbb{R}^d)^m\rightarrow(\mathbb{R}^d)^m/\mathbb{R}^d$ of the cones in $\Sigma^m$. So, the image of every cone in $\mathcal{R}(P,m)$ is a cone in $\mathcal{Q}(P,m)$. Geometrically, this is equivalent to the fact that $\dim(f^{-1}(y))$ is constant for all $y\in X_{\mathcal{Q}(P,m)}$. In other words, the toric morphism $f$ is equidimensional.

Now assume that $X_{\mathcal{Q}(P,m)}$ is smooth. Then $f$ is flat by the miracle flatness theorem \cite[Theorem~23.1]{Mat89} because, as we just proved, $f$ is equidimensional and $X_{\mathcal{R}(P,m)}$ is Cohen--Macaulay (the latter is true for any normal toric variety, see \cite[\S\,2.1]{Ful93}).

Finally, assume that $f$ has reduced fibers. Then the fact that $f$ is equidimensional combined with \cite[Lemma~5.2 and Remark~5.3]{AK00} imply that $f$ is weakly semistable in the sense of \cite[Definition~2.1.2]{Mol21}. Then we can conclude that $f$ is flat by \cite[Theorem~2.1.4]{Mol21}.
\end{proof}

\begin{remark}
\label{on-flatness-of-toric-family}
We can identify several nontrivial examples where $X_{\mathcal{Q}(P,m)}$ is smooth. In \S\,\ref{explicit-examples} we show that $X_{\mathcal{Q}(\Delta_d,2)}$ is smooth for any $d\geq1$. Also, it is well-known that $X_{\mathcal{Q}(\Delta_1,m)}$ is smooth for all $m$ (see \S\,\ref{family-losev-manin-space-revisited}). Other examples where $f\colon X_{\mathcal{R}(P,m)}\rightarrow X_{\mathcal{Q}(P,m)}$ is flat but $X_{\mathcal{Q}(P,m)}$ is not necessarily smooth were found in \cite[Proposition~8.4 and Remark~8.5]{ST21}. These examples have $\dim(P)=2$. In Theorem~\ref{P-simplex-flat-reduced-fibers} we show that $X_{\mathcal{R}(\Delta_d,m)}\rightarrow X_{\mathcal{Q}(\Delta_d,m)}$ is flat with reduced fibers for all $d$ and $m$.
\end{remark}

The next result gives a combinatorial criterion to determine the reducedness of the fibers of $f\colon X_{\mathcal{R}(P,m)}\rightarrow X_{\mathcal{Q}(P,m)}$.

\begin{theorem}
\label{thm-criterion-for-reduced-fibers}
Consider a toric morphism $f\colon X_{\mathcal{R}(P,m)}\rightarrow X_{\mathcal{Q}(P,m)}$. Then $f$ is flat with reduced fibers if and only if for all vectors $\mathbf{v}\in(\mathbb{Z}^d)^m$ and for all non-empty cells $C\in\mathcal{S}(\mathbf{v})$, there exists a point in $C$ with integral coordinates.
\end{theorem}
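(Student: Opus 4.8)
The plan is to translate the statement ``$f$ is flat with reduced fibers'' into a condition about the cones of $\mathcal{R}(P,m)$ and $\mathcal{Q}(P,m)$, then use the combinatorial dictionary between cones of $\mathcal{Q}(P,m)$ and subdivisions $\mathcal{S}(\mathbf{v})$ established in Lemma~\ref{bijection-cones-subdivisions}. First I would recall the standard criterion for a toric morphism $f\colon X_\Sigma\to X_{\Sigma'}$ coming from a lattice map $q\colon N\to N'$: the fiber over the torus-fixed point of a cone $\xi\in\Sigma'$ is reduced (and $f$ is flat there) precisely when, for every cone $\eta\in\Sigma$ with $q(\eta)=\xi$, the map of lattices $q\colon N\cap\mathrm{span}(\eta)\to N'\cap\mathrm{span}(\xi)$ together with the semigroups behaves well --- concretely, when the primitive generators of $\eta$ map onto the primitive generators of $\xi$, equivalently the sublattice $q(N_\eta)$ is saturated in $N'_\xi$ and $\eta$ surjects onto $\xi$. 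Since $f$ is already known to be equidimensional (Proposition~\ref{toric-family-is-equidimensional}) and flatness follows from reduced fibers there, the real content is the reducedness condition, which I would phrase as: for every $\eta\in\mathcal{R}(P,m)$, writing $\xi=q(\eta)\in\mathcal{Q}(P,m)$, the image lattice $q(\mathbb{Z}\eta)$ equals the saturation $\overline{\mathbb{Z}\xi}=\mathbb{Z}\xi\cap\mathrm{span}(\xi)$ inside $(\mathbb{Z}^d)^m/\mathbb{Z}^d$.

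Next I would make this condition concrete using the description $\mathcal{R}(P,m)=\{q^{-1}(\xi)\cap\tau \mid \xi\in\mathcal{Q}(P,m),\ \tau\in\Sigma_P^m\}$. Fixing $\mathbf{v}\in(\mathbb{Z}^d)^m$, Lemma~\ref{bijection-cones-subdivisions}(i) identifies the cone $\tau=\tau_1\times\cdots\times\tau_m\in\Sigma_\mathbf{v}^m$ restricted to $\mathbf{v}+\mathbb{R}^d$ with the cell $C=\bigcap_i(\tau_i-v_i)\in\mathcal{S}(\mathbf{v})$; the affine slice $\mathbf{v}+C\subseteq\mathbf{v}+\mathbb{R}^d$ is cut out inside $\tau$, and $q$ restricted to $\mathbf{v}+\mathbb{R}^d$ is essentially an isomorphism onto the relevant piece of $(\mathbb{R}^d)^m/\mathbb{R}^d$ (the diagonal $\mathbb{R}^d$ being transverse). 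The key translation I want is: the integral points of the image cone $q(\eta)\subseteq(\mathbb{R}^d)^m/\mathbb{R}^d$ that lie in the lattice $(\mathbb{Z}^d)^m/\mathbb{Z}^d$ correspond, under the identification $\mathbf{v}+\mathbb{R}^d\cong\mathbb{R}^d$, to the integral points of the cell $C$ (up to the diagonal lattice, which is always integral). Thus ``$q(\mathbb{Z}\eta)$ is saturated in $(\mathbb{Z}^d)^m/\mathbb{Z}^d$'' becomes ``the lattice points of the cell $C\in\mathcal{S}(\mathbf{v})$ generate, over $\mathbb{Z}_{\ge 0}$ modulo the diagonal, all integral points of $q(C)$'' --- and since $C$ is itself a polyhedral cell obtained as a common refinement, the obstruction to this is exactly the existence of a lattice point in each non-empty cell $C$. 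Running over all $\xi\in\mathcal{Q}(P,m)$ amounts to running over all equivalence classes of $\mathbf{v}\in(\mathbb{Z}^d)^m$, and running over all $\tau\in\Sigma_\mathbf{v}^m$ amounts to running over all non-empty cells $C\in\mathcal{S}(\mathbf{v})$, which gives the stated ``for all $\mathbf{v}$, for all non-empty cells'' quantifier.

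For the two directions: if every non-empty cell $C\in\mathcal{S}(\mathbf{v})$ (for every integral $\mathbf{v}$) contains a lattice point, I would argue that each cone $\eta\in\mathcal{R}(P,m)$ is then ``integrally closed'' over its image --- the primitive generators of $\eta$ not killed by $q$ map to primitive generators of $\xi=q(\eta)$, and the kernel direction is the integral diagonal --- so the toric morphism is weakly semistable in the sense of \cite[Definition~2.1.2]{Mol21}, hence flat with reduced fibers by \cite[Theorem~2.1.4]{Mol21} (invoking equidimensionality from Proposition~\ref{toric-family-is-equidimensional}). Conversely, if some non-empty cell $C_0\in\mathcal{S}(\mathbf{v}_0)$ has no lattice point, I would exhibit the corresponding cone $\eta_0\in\mathcal{R}(P,m)$ and show $q(\mathbb{Z}\eta_0)$ is a proper finite-index sublattice of its saturation, so that the fiber of $f$ over the torus-fixed point of $\xi_0=q(\eta_0)$ acquires nilpotents (the local computation being $\mathrm{Spec}$ of a monoid algebra over a non-saturated map of monoids), contradicting reducedness. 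The main obstacle I anticipate is the bookkeeping in this lattice-saturation translation: one must be careful that the cell $C$ is a cell of the refinement $\mathcal{S}(\mathbf{v})$ and not merely of one $\Sigma(v_i)$, that the identification $\mathbf{v}+\mathbb{R}^d\cong\mathbb{R}^d$ is compatible with the lattices $(\mathbb{Z}^d)^m$ and $\mathbb{Z}^d$ in the way needed, and that ``lattice point in $C$'' is genuinely equivalent to the saturation condition rather than merely implied by it --- establishing this equivalence cleanly, rather than the weakly-semistable machinery, is where the care is needed.
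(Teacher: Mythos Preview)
Your proposal is correct and follows essentially the same route as the paper: reduce to the lattice-point surjectivity criterion for weak semistability (via \cite{AK00} and \cite{Mol21}), invoke equidimensionality from Proposition~\ref{toric-family-is-equidimensional}, and translate the criterion cone-by-cone into the existence of integral points in the cells $C\in\mathcal{S}(\mathbf{v})$ using Lemma~\ref{bijection-cones-subdivisions}. One point to sharpen: the paper does not phrase the criterion as ``$q(\mathbb{Z}\eta)$ is saturated'' or ``primitive generators map to primitive generators'' (your various formulations are not all equivalent, and the lattice-span version is strictly weaker than what is needed), but rather uses directly that the \emph{semigroup} of lattice points $\eta\cap(\mathbb{Z}^d)^m$ must surject onto $q(\eta)\cap((\mathbb{Z}^d)^m/\mathbb{Z}^d)$; the concrete computation is then immediate, since a lattice preimage of $[\mathbf{v}]$ in $\eta=q^{-1}(\xi)\cap\tau$ is exactly a vector $\mathbf{v}+z$ with $z\in\mathbb{Z}^d$ and $\mathbf{v}+z\in\tau$, i.e.\ $z\in C$.
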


\begin{proof}
Let us start by assuming that there exists a vector $\mathbf{v}\in(\mathbb{Z}^d)^m$ and a non-empty cell $C\in\mathcal{S}(\mathbf{v})$ with no integral points. We can write $C=\cap_{i=1}^m(-v_i+\tau_i)$ for some $\tau\in\Sigma_P^m$. As $\mathcal{Q}(P,m)$ is a complete fan, there exists a cone $\xi\in\mathcal{Q}(P,m)$ containing $[\mathbf{v}]$. Define $\eta=q^{-1}(\xi)\cap\tau\in\mathcal{R}(P,m)$. We claim that $q\colon(\mathbb{R}^d)^m\rightarrow(\mathbb{R}^d)^m/\mathbb{R}^d$ does not induce a surjection from the lattice points in $\eta$ to the lattice points in $q(\eta)\in\mathcal{Q}(P,m)$. This implies that $X_{\mathcal{R}(P,m)}\rightarrow X_{\mathcal{Q}(P,m)}$ has non-reduced fibers by \cite[Lemma~5.2 and Remark~5.3]{AK00}.

To prove the above claim, first notice that $[\mathbf{v}]$ is a lattice point in $q(\eta)$. This is true because $q(\eta)=\xi\cap q(\tau)$, $[\mathbf{v}]\in\xi$ by definition of $\xi$, and $[\mathbf{v}]\in q(\tau)$ because for any $\delta\in C$, $\mathbf{v}+\delta\in\tau$. Now, assume by contradiction that there exists a lattice point $\mathbf{w}\in\eta$ such that $q(\mathbf{w})=[\mathbf{v}]$. This implies that $\mathbf{w}=\mathbf{v}+z$ for some $z\in\mathbb{Z}^d$. But $\mathbf{v}+z=\mathbf{w}\in\tau$ (recall $\eta=q^{-1}(\xi)\cap\tau$), hence $z$ is contained in the cell $C$. This is a contradiction as $C$ does not contain any integral point.

For the converse, assume that for all vectors $\mathbf{v}\in(\mathbb{Z}^d)^m$ and for all non-empty cells $C\in\mathcal{S}(\mathbf{v})$, there exists a point in $C$ with integral coordinates. To prove that $f$ is flat with reduced fibers, we use \cite[Theorem~2.1.4]{Mol21}. As we already know that $f$ is equidimensional, we only have to show that whenever $\eta\in\mathcal{R}(P,m)$, the lattice points in $\eta$ surject onto the lattice points in $q(\eta)\in\mathcal{Q}(P,m)$.

To prove this, we use the same idea used in \cite[Proposition~8.4]{ST21}. Let $[\mathbf{v}]\in q(\eta)$ be a lattice point. Write $\eta=q^{-1}(\xi)\cap\tau$ for some $\xi\in\mathcal{Q}(P,m)$ and $\tau\in\Sigma_P^m$. This implies that $[\mathbf{v}]\in\xi\cap q(\tau)$, hence $\mathbf{v}\in q^{-1}(\xi)$ and $\mathbf{v}+\delta\in\tau$ for some $\delta\in\mathbb{R}^d$. Let $C\in\mathcal{S}(\mathbf{v})$ be the cell given by $\cap_{i=1}^m(-v_i+\tau_i)$, which is non-empty as $\delta\in C$. By assumption, there exists a lattice point $z\in C$. Finally, notice that $\mathbf{v}+z$ is an integral vector lying in $q^{-1}(\xi)\cap\tau=\eta$ and $q(\mathbf{v}+z)=[\mathbf{v}]$, proving what we needed.
\end{proof}

\begin{example}
\label{ex:non-red-fibers}
A concrete example of family $X_{\mathcal{R}(P,m)}\rightarrow X_{\mathcal{Q}(P,m)}$ with non-reduced fibers is the following. Let $P\subseteq\mathbb{R}^2$ be the lattice polytope
\[
\Conv((0,0),(0,4),(-1,4)).
\]
The corresponding toric surface $X_P$ is isomorphic to $\mathbb{F}_4^0$, which is obtained by contracting the $(-4)$-curve on the Hirzebruch surface $\mathbb{F}_4$. The maximal cones of the normal fan $\Sigma_P$ are given by
\[
\alpha=\langle(-1,0),(0,-1)\rangle,~\beta=\langle(0,-1),(4,1)\rangle,~\gamma=\langle(4,1),(-1,0)\rangle.
\]

Let $\mathbf{w}=(0,0,-3,-1,-5,-2,-6,-3)\in(\mathbb{R}^2)^4$ and consider the cone $\xi\in\mathcal{Q}(P,4)$ containing $[w]$ in its relative interior (see Figure~\ref{fig:non-reduced-fibers} for the corresponding subdivision $\mathcal{S}(\mathbf{w})$ of $\mathbb{R}^2$). Let $\tau\in\Sigma_P^4$ be the cone $\gamma\times\beta\times\beta\times\alpha$. The cell in $\mathcal{S}(\mathbf{w})$ given by
\[
C=((0,0)+\gamma)\cap((3,1)+\beta)\cap((5,2)+\beta)\cap((6,3)+\alpha)
\]
is the shaded $2$-dimensional cell in $\mathcal{S}(\mathbf{w})$ in Figure~\ref{fig:non-reduced-fibers}. As $C$ does not contain integral points, we can conclude by Theorem~\ref{thm-criterion-for-reduced-fibers} that $X_{\mathcal{R}(P,m)}\rightarrow X_{\mathcal{Q}(P,m)}$ has non-reduced fibers.
\end{example}

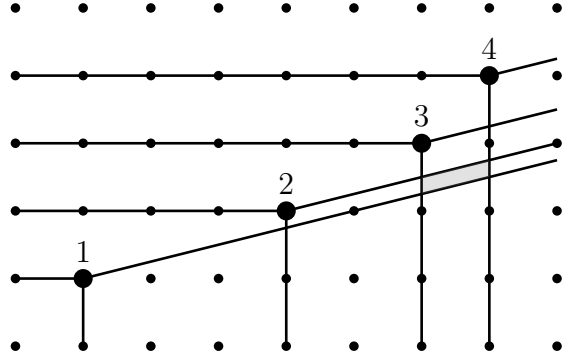
\begin{figure}
\begin{tikzpicture}[scale=0.9]

	\draw[line width=1pt] (0,0) -- (0-1,0);
	\draw[line width=1pt] (0,0) -- (0,0-1);
	\draw[line width=1pt] (0,0) -- (0+7,0+2-1/4);
	\fill (0,0) circle (4pt);

	\draw[line width=1pt] (3,1) -- (3-4,1);
	\draw[line width=1pt] (3,1) -- (3,1-2);
	\draw[line width=1pt] (3,1) -- (3+4,1+1);
	\fill (3,1) circle (4pt);

	\draw[line width=1pt] (5,2) -- (5-6,2);
	\draw[line width=1pt] (5,2) -- (5,2-3);
	\draw[line width=1pt] (5,2) -- (5+2,2+1/2);
	\fill (5,2) circle (4pt);

	\draw[line width=1pt] (6,3) -- (6-7,3);
	\draw[line width=1pt] (6,3) -- (6,3-4);
	\draw[line width=1pt] (6,3) -- (6+1,3+1/4);
	\fill (6,3) circle (4pt);

	\fill (-1,4) circle (2pt);
	\fill (0,4) circle (2pt);
	\fill (1,4) circle (2pt);
	\fill (2,4) circle (2pt);
	\fill (3,4) circle (2pt);
	\fill (4,4) circle (2pt);
	\fill (5,4) circle (2pt);
	\fill (6,4) circle (2pt);
	\fill (7,4) circle (2pt);
	\fill (-1,3) circle (2pt);
	\fill (0,3) circle (2pt);
	\fill (1,3) circle (2pt);
	\fill (2,3) circle (2pt);
	\fill (3,3) circle (2pt);
	\fill (4,3) circle (2pt);
	\fill (5,3) circle (2pt);
	\fill (6,3) circle (2pt);
	\fill (7,3) circle (2pt);
	\fill (-1,2) circle (2pt);
	\fill (0,2) circle (2pt);
	\fill (1,2) circle (2pt);
	\fill (2,2) circle (2pt);
	\fill (3,2) circle (2pt);
	\fill (4,2) circle (2pt);
	\fill (5,2) circle (2pt);
	\fill (6,2) circle (2pt);
	\fill (7,2) circle (2pt);
	\fill (-1,1) circle (2pt);
	\fill (0,1) circle (2pt);
	\fill (1,1) circle (2pt);
	\fill (2,1) circle (2pt);
	\fill (3,1) circle (2pt);
	\fill (4,1) circle (2pt);
	\fill (5,1) circle (2pt);
	\fill (6,1) circle (2pt);
	\fill (7,1) circle (2pt);
	\fill (-1,0) circle (2pt);
	\fill (0,0) circle (2pt);
	\fill (1,0) circle (2pt);
	\fill (2,0) circle (2pt);
	\fill (3,0) circle (2pt);
	\fill (4,0) circle (2pt);
	\fill (5,0) circle (2pt);
	\fill (6,0) circle (2pt);
	\fill (7,0) circle (2pt);
	\fill (-1,-1) circle (2pt);
	\fill (0,-1) circle (2pt);
	\fill (1,-1) circle (2pt);
	\fill (2,-1) circle (2pt);
	\fill (3,-1) circle (2pt);
	\fill (4,-1) circle (2pt);
	\fill (5,-1) circle (2pt);
	\fill (6,-1) circle (2pt);
	\fill (7,-1) circle (2pt);

	\node at (0,0+0.4) {$1$};
	\node at (3,1+0.4) {$2$};
	\node at (5,2+0.4) {$3$};
	\node at (6,3+0.4) {$4$};

\fill[gray!90,nearly transparent] (5,1+1/4) -- (5,1+1/2) -- (6,1+3/4) -- (6,1+1/2) -- cycle;

\end{tikzpicture}
\caption{Subdivision of $\mathbb{R}^2$ giving a cone in $\mathcal{Q}(P,4)$ in Example~\ref{ex:non-red-fibers}.}
\label{fig:non-reduced-fibers}
\end{figure}

We can use Theorem~\ref{thm-criterion-for-reduced-fibers} to identify an important class of families $X_{\mathcal{R}(P,m)}\rightarrow X_{\mathcal{Q}(P,m)}$ which are flat with reduced fibers.

\begin{theorem}
\label{P-simplex-flat-reduced-fibers}
Let $\Delta_d$ be the $d$-dimensional simplex. Then $X_{\mathcal{R}(\Delta_d,m)}\rightarrow X_{\mathcal{Q}(\Delta_d,m)}$ is flat with reduced fibers for all $d\geq1$.
\end{theorem}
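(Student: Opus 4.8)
The plan is to verify the combinatorial criterion of Theorem~\ref{thm-criterion-for-reduced-fibers}: it suffices to show that for every $\mathbf{v}=(v_1,\ldots,v_m)\in(\mathbb{Z}^d)^m$ and every non-empty cell $C\in\mathcal{S}(\mathbf{v})$ there is a point of $C$ with integral coordinates. The guiding idea is that, when $P=\Delta_d$, such cells are exactly the solution sets of systems of \emph{difference constraints} $z_a-z_b\le c_{ab}$ with integral $c_{ab}$, and a feasible system of this kind always admits an integral solution.

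First I would make the normal fan of $\Delta_d$ explicit. Realizing $\Delta_d=\Conv(0,e_1,\ldots,e_d)\subseteq\mathbb{R}^d$, its inner normal fan $\Sigma$ is the fan of $\mathbb{P}^d$: it has rays $u_0=-(e_1+\ldots+e_d),u_1=e_1,\ldots,u_d=e_d$ with $u_0+\ldots+u_d=0$, and cones $\langle u_i : i\notin T\rangle$ for $T\subsetneq\{0,1,\ldots,d\}$. Adopting the convention $z_0:=0$ for a vector $z=(z_1,\ldots,z_d)$, one checks directly that
\[
\langle u_i : i\notin T\rangle=\{z\in\mathbb{R}^d : z_a=z_b\ \text{for}\ a,b\in T,\ \ z_a\le z_i\ \text{for}\ a\in T,\ i\notin T\}.
\]
Thus, for any $v\in\mathbb{R}^d$, membership of $z$ in a cone of the translated fan $\Sigma(v)=-v+\Sigma$ is cut out by conditions $z_a-z_b\le c$ (together with equalities $z_a=z_b$, each a pair of such), where $a,b\in\{0,1,\ldots,d\}$ and the constants $c$ depend only on the coordinates of $v$; in particular, if $v\in\mathbb{Z}^d$ these constants are integers.

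Next I would reduce to integrality of difference systems. By the description of cells used in the proof of Lemma~\ref{bijection-cones-subdivisions}, a non-empty cell $C\in\mathcal{S}(\mathbf{v})$ has the form $C=\bigcap_{i=1}^m(-v_i+\tau_i)$ with each $\tau_i$ a cone of $\Sigma$. Combining this with the previous paragraph and using $v_i\in\mathbb{Z}^d$, the set $C$ is precisely the set of $z\in\mathbb{R}^d$ satisfying a finite system of inequalities $z_a-z_b\le c_{ab}$ with $a,b\in\{0,1,\ldots,d\}$ and $c_{ab}\in\mathbb{Z}$, together with $z_0=0$. Regarding this as a system over variables $z_0,z_1,\ldots,z_d$, its constraint matrix has every row of the form $\pm(e_a-e_b)$, hence is totally unimodular, so $\{(z_0,\ldots,z_d):z_a-z_b\le c_{ab}\}$ is an integral polyhedron; equivalently, by the classical shortest-path (Bellman--Ford) characterization of difference constraints, feasibility forces an integral solution. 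If $C\ne\emptyset$, pick $z\in C$; then $(0,z_1,\ldots,z_d)$ is feasible, so there is an integral solution $\hat z\in\mathbb{Z}^{d+1}$, and since the system is invariant under adding a common constant to all coordinates, $\hat z-\hat z_0\cdot\mathbf{1}$ is an integral solution with zeroth coordinate $0$, i.e.\ $(\hat z_1-\hat z_0,\ldots,\hat z_d-\hat z_0)\in\mathbb{Z}^d\cap C$. This verifies the hypothesis of Theorem~\ref{thm-criterion-for-reduced-fibers}, so $f\colon X_{\mathcal{R}(\Delta_d,m)}\to X_{\mathcal{Q}(\Delta_d,m)}$ is flat with reduced fibers.

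The only step needing real care is the first one: checking that every cone of the normal fan of $\Delta_d$ (hence every cell of $\mathcal{S}(\mathbf{v})$) is genuinely cut out by difference inequalities, and correctly bookkeeping the fixed coordinate $z_0=0$ when passing between $\mathbb{R}^d$ and $\mathbb{R}^{d+1}$. Once this is in place the integral-feasibility statement is a standard fact, and it treats cells of all dimensions uniformly, so no separate argument for lower-dimensional cells is required.
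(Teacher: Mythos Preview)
Your proof is correct and takes a genuinely different route from the paper's. Both arguments reduce to the criterion of Theorem~\ref{thm-criterion-for-reduced-fibers}, but then diverge. The paper passes to the refinement $\mathcal{S}(\mathbf{v})'$ given by the full hyperplane arrangement of translates of $\{x_i=0\}$ and $\{x_j-x_h=0\}$, and shows by an elementary induction on $d$ that every vertex of every cell of $\mathcal{S}(\mathbf{v})'$ is integral (so in particular each cell contains an integral point). You instead recognise each cell of $\mathcal{S}(\mathbf{v})$ directly as the feasible set of a system of integral difference constraints $z_a-z_b\le c_{ab}$ (using the auxiliary coordinate $z_0=0$), and invoke the standard fact---via total unimodularity of the constraint matrix or the Bellman--Ford shortest-path construction---that a feasible such system always admits an integral solution. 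Your approach is more conceptual, connects the problem to well-known integer-programming results, and treats bounded and unbounded cells uniformly without refining the subdivision; the paper's argument is completely self-contained. One small caveat: the appeal to total unimodularity is slightly delicate because the polyhedron in $\mathbb{R}^{d+1}$ has lineality direction $\mathbf{1}$ and hence no vertices, so ``integral polyhedron'' does not instantly hand you an integral point; the Bellman--Ford justification you also give (or the even simpler observation that the coordinatewise floor of any feasible $z$ remains feasible, since $\lfloor z_a\rfloor-\lfloor z_b\rfloor\le c_{ab}$ whenever $z_a-z_b\le c_{ab}$ and $c_{ab}\in\mathbb{Z}$) sidesteps this cleanly.
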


\begin{proof}
By Theorem~\ref{thm-criterion-for-reduced-fibers}, it suffices to prove that if $\mathbf{v}\in(\mathbb{Z}^d)^m$ and $C$ is a cell of the subdivision $\mathcal{S}(\mathbf{v})$, then $C$ contains an integral point. We will prove that a finer convex subdivision of $\mathbb{R}^d$ consists of cells which always contain an integral point.

Let $\Sigma$ be the normal fan to $\Delta_d$. The facets of the cones in $\Sigma$ are contained in the hyperplanes $x_i=0$, $x_j-x_h=0$, for $i,j,h\in\{1,\dots,d\}$, $j<h$. Therefore, if we write $\mathbf{v}=(v_1,\ldots,v_m)$ and $v_k=(v_{k1},\ldots,v_{kd})$, then the hyperplanes containing the facets of the cones in $\Sigma(v_k)$ are
\begin{equation}
\label{hyperplanes-subdivision}
x_i=-v_{ki},~x_j-x_h=-v_{kj}+v_{kh}.
\end{equation}
Let $\mathcal{S}(\mathbf{v})'$ be the subdivision of $\mathbb{R}^d$ induced by the above hyperplanes. $\mathcal{S}(\mathbf{v})'$ is a refinement of $\mathcal{S}(\mathbf{v}).$ We now show that for all $C\in\mathcal{S}(\mathbf{v})'$, the vertices of $C$ have integral coordinates.

A vertex $w$ of $C$ can be obtained by intersecting some of the hyperplanes in \eqref{hyperplanes-subdivision}. So $w$ is the unique solution to a system of equations of the form
\begin{equation}
\label{hyperplanes-cutting-vertex}
x_{i_\ell}=\alpha_{i_\ell},~x_{j_s}-x_{h_t}=\beta_{j_sh_t},
\end{equation}
where the constants $\alpha_{i_\ell},\beta_{j_sh_t}$ are integers. We now argue by induction on $d$ that $w$ is an integral vector. If $d=1$, then we can only have one equation $x_1=\alpha_1$, hence $w=w_1=\alpha$ is an integer. Assume the conclusion true for $d\geq1$ and let us prove it for $d+1$. Notice that if \eqref{hyperplanes-cutting-vertex} did not contain any equation of the form $x_{i_\ell}=\alpha_{i_\ell}$ then we could select the highest index $h_t$ and and make $x_{h_t}$ a free parameter, implying that the system will have infinitely many solutions, which is impossible. Therefore, in \eqref{hyperplanes-cutting-vertex} there exists at least one equation of the form $x_{i_\ell}=\alpha_{i_\ell}$. We can then eliminate the variable $x_{i_\ell}$ obtaining a system with $d$ indeterminates and hence conclude the proof by the inductive hypothesis.
\end{proof}

\begin{remark}
The case $d=2$ in Theorem~\ref{P-simplex-flat-reduced-fibers} was known (see \cite[Proposition~8.4]{ST21}). The case $d=1$ can be also viewed as a consequence of the discussion in \S\,\ref{family-losev-manin-space-revisited}.
\end{remark}

\begin{remark}
An example of toric morphism $f\colon X_{\mathcal{R}(P,m)}\rightarrow X_{\mathcal{Q}(P,m)}$ which is flat and with non-reduced fibers will be given in Example~\ref{ex-f-flat-and-with-non-red-fibers}.
\end{remark}


\subsection{Structure of the reducible marked fibers of $X_{\mathcal{R}(P,m)}\rightarrow X_{\mathcal{Q}(P,m)}$}
\label{on-the-structure-of-reducible-fibers}

With the same notation as in \S\,\ref{rel-with-tor-stacks}, consider the toric stack $[V/\!/H]$ and the universal family $\mathcal{U}\rightarrow[V/\!/H]$ with underlying morphism of coarse moduli spaces $U\rightarrow M$. In \cite[Corollary~3.14]{AM16} it is shown that the fibers of $\mathcal{U}\rightarrow[V/\!/H]$ are connected \emph{stable toric varieties} in the sense of Alexeev (for the definition we refer to \cite{Ale02,AB06}). Moreover, the morphism of toric stacks $\mathcal{U}\rightarrow[V/\!/H]$ is the minimal modification of $U\rightarrow M$ which is flat with reduced fibers \cite[\S\,3.5]{Mol14}.

In our case of interest, for a lattice polytope $P$, consider the universal family $\mathcal{U}\rightarrow[X_P^m/\!/H]$ and the underlying morphism of coarse moduli spaces $f\colon X_{\mathcal{R}(P,m)}\rightarrow X_{\mathcal{Q}(P,m)}$. By the above discussion, if $f$ is flat with reduced fibers, then we can conclude that the fibers of $f$ are connected stable toric varieties. The structure of the reducible fibers of $f$ together with the sections can then be further analyzed depending on $P$.

Theorem~\ref{P-simplex-flat-reduced-fibers} guarantees that $f$ is flat with reduced fibers if $P=\Delta_d$. For $d=1$, in the next section we will show that these fibers are exactly the chains of projective lines parametrized by the Losev--Manin moduli space. The marked fibers of $f$ for $d=2$ were studied in \cite{ST21} and correspond to the so called \emph{mixed subdivisions} of $m\Delta_2$ \cite[Definition~1.1]{San05}. If $d\geq3$, then using the theory of Mustafin joins \cite{CHSW11} we have results analogous to \cite[Proposition~8.7 and Corollary~8.10]{ST21} about the marked fibers of $f$.


\section{The universal family of the Losev--Manin moduli space}
\label{family-losev-manin-space-revisited}

The Losev--Manin moduli space $\overline{\mathrm{LM}}_{m+2}$ is the toric variety associated to the permutohedron (see \cite{LM00} and \cite{Kap93}), which corresponds to the fiber polytope associated to the fibration $\Delta_1^m\rightarrow m\Delta_1$ defined by $(x_1,\ldots,x_m)\mapsto x_1+\ldots+x_m$ \cite[Example~5.4]{BS92}. The normal fan to this fiber polytope is exactly the quotient fan $\mathcal{Q}(\Delta_1,m)$, hence $\overline{\mathrm{LM}}_{m+2}\cong X_{\mathcal{Q}(\Delta_1,m)}$. The universal family of $\overline{\mathrm{LM}}_{m+2}$ is given by $\overline{\mathrm{LM}}_{m+3}\rightarrow\overline{\mathrm{LM}}_{m+2}$, where the morphism forgets one of the light points \cite[\S\,2.1]{LM00}. Given the family $X_{\mathcal{R}(\Delta_1,m)}\rightarrow X_{\mathcal{Q}(\Delta_1,m)}$, it is therefore natural to ask whether $\overline{\mathrm{LM}}_{m+3}\cong X_{\mathcal{R}(\Delta_1,m)}$, or equivalently, whether the fan $\mathcal{R}(\Delta_1,m)$ is isomorphic to $\mathcal{Q}(\Delta_1,m+1)$. In the current section we show that this is true. We need a preliminary lemma.

\begin{lemma}
\label{key-lemma-for-family-losev-manin}
For $i\in\{1,\ldots,m+1\}$, consider $\overline{p}_i\colon(\mathbb{R}^d)^{m+1}/\mathbb{R}^d\rightarrow(\mathbb{R}^d)^m/\mathbb{R}^d$. Then
\[
\overline{p}_i(\mathcal{Q}(\Delta_1,m+1))=\mathcal{Q}(\Delta_1,m).
\]
\end{lemma}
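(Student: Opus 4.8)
The plan is to show the two fans coincide by a double inclusion, and since both are fans inside $(\mathbb{R}^d)^{m+1}/\mathbb{R}^d$... wait, no: $\mathcal{Q}(\Delta_1,m)$ lives in $(\mathbb{R}^d)^m/\mathbb{R}^d$, so $\overline{p}_i$ maps the ambient space of $\mathcal{Q}(\Delta_1,m+1)$ onto the ambient space of $\mathcal{Q}(\Delta_1,m)$. I would first recall that by Proposition~\ref{forgetful-maps} the map $\overline{p}_i$ already induces a toric morphism $X_{\mathcal{Q}(\Delta_1,m+1)}\to X_{\mathcal{Q}(\Delta_1,m)}$, which means that for every cone $\sigma\in\mathcal{Q}(\Delta_1,m+1)$ there is a cone of $\mathcal{Q}(\Delta_1,m)$ containing $\overline{p}_i(\sigma)$; the content of the lemma is the sharper statement that $\overline{p}_i(\sigma)$ is itself a cone of $\mathcal{Q}(\Delta_1,m)$ and that every cone of $\mathcal{Q}(\Delta_1,m)$ arises this way. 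It suffices to work with maximal cones, or more robustly with the relative interiors of all cones, using the combinatorial description from Lemma~\ref{bijection-cones-subdivisions}: $[\mathbf{v}]$ and $[\mathbf{w}]$ lie in the relative interior of the same cone of $\mathcal{Q}(\Delta_1,k)$ if and only if the subdivisions $\mathcal{S}(\mathbf{v})$ and $\mathcal{S}(\mathbf{w})$ of $\mathbb{R}^1$ are equivalent.

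The key simplification is that for $P=\Delta_1$ the normal fan $\Sigma$ in $\mathbb{R}^1$ has exactly two maximal cones, $\mathbb{R}_{\geq 0}$ and $\mathbb{R}_{\leq 0}$, so the subdivision $\mathcal{S}(\mathbf{v})$ attached to $\mathbf{v}=(v_1,\dots,v_k)\in\mathbb{R}^k$ is nothing but the subdivision of the real line cut out by the points $-v_1,\dots,-v_k$; its combinatorial type is recorded precisely by the weak ordering (ordered set partition) induced by the values $v_1,\dots,v_k$, i.e.\ which $v_a$ are equal, and in what order they occur. Thus a cone of $\mathcal{Q}(\Delta_1,k)$, up to relative interior, corresponds to a weak ordering of $\{1,\dots,k\}$ modulo the global translation (which is already quotiented out in passing to $(\mathbb{R}^d)^k/\mathbb{R}^d$). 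Under this dictionary the map $\overline{p}_i$ on a point $[\mathbf v]=[(v_1,\dots,v_{m+1})]$ simply deletes the $i$-th coordinate, hence sends the weak ordering of $\{1,\dots,m+1\}$ to the induced weak ordering on $\{1,\dots,m+1\}\setminus\{i\}\cong\{1,\dots,m\}$. Every weak ordering of an $m$-element set is obviously the restriction of some weak ordering of an $(m+1)$-element set (just insert the extra index anywhere, say tied with an existing one), so $\overline{p}_i$ is surjective on the level of these combinatorial types, giving $\overline{p}_i(\mathcal{Q}(\Delta_1,m+1))\supseteq\mathcal{Q}(\Delta_1,m)$; and since deleting a coordinate is a linear map carrying the closed cone described by a weak ordering of $\{1,\dots,m+1\}$ exactly onto the closed cone described by the restricted weak ordering (the defining inequalities $v_a\le v_b$ among the surviving indices are preserved and no new ones are created), we get $\overline{p}_i(\mathcal{Q}(\Delta_1,m+1))\subseteq\mathcal{Q}(\Delta_1,m)$ as well. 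Combining the two inclusions yields the equality of fans.

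The main obstacle I anticipate is the bookkeeping needed to make the passage "cone $\leftrightarrow$ weak ordering" precise and compatible with the translation quotient by $\mathbb{R}^d$, and in particular to check that $\overline{p}_i$ really maps each \emph{closed} cone of $\mathcal{Q}(\Delta_1,m+1)$ \emph{onto} (not merely into) a closed cone of $\mathcal{Q}(\Delta_1,m)$, rather than onto a lower-dimensional face or a proper subset of it. This amounts to verifying that a point $[\mathbf u]$ in the relative interior of a cone $\tau\in\mathcal{Q}(\Delta_1,m)$ can be lifted to a point $[\mathbf v]$ in the relative interior of some $\sigma\in\mathcal{Q}(\Delta_1,m+1)$ with $\overline p_i(\sigma)=\tau$; concretely one lifts $\mathbf u$ by assigning the deleted $i$-th coordinate a value tied with one of the existing coordinates, which keeps the combinatorial type of the subdivision unchanged after restriction. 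Once this lifting statement is in hand the equality of fans is immediate, so I would organize the write-up around: (1) the two-cone description of $\Sigma_{\Delta_1}$ and the resulting weak-ordering parametrization of cones of $\mathcal{Q}(\Delta_1,k)$ via Lemma~\ref{bijection-cones-subdivisions}; (2) the explicit description of $\overline p_i$ as coordinate deletion and hence as restriction of weak orderings; (3) the lifting argument establishing surjectivity onto each closed cone; (4) the conclusion $\overline p_i(\mathcal{Q}(\Delta_1,m+1))=\mathcal{Q}(\Delta_1,m)$.
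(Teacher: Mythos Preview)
Your proposal is correct and follows essentially the same strategy as the paper: both arguments hinge on the observation that for $P=\Delta_1$ the subdivision $\mathcal{S}(\mathbf{v})$ of $\mathbb{R}$ records nothing more than the relative order of the coordinates $v_1,\ldots,v_k$, so that cones of $\mathcal{Q}(\Delta_1,k)$ are indexed by weak orderings, and $\overline{p}_i$ acts by deleting the $i$-th entry from this ordering. The paper phrases everything in the subdivision language of Lemma~\ref{bijection-cones-subdivisions} and carries out the lifting step by placing the extra point $-x$ in the relative interior of the cell $D$ corresponding (under the equivalence $\mathcal{S}(p_{m+1}(\mathbf{v}))\sim\mathcal{S}(\mathbf{u})$) to the cell containing $-v_{m+1}$, then checks $\mathcal{S}(\mathbf{u},x)\sim\mathcal{S}(\mathbf{v})$ via a left/right case analysis on $\mathbb{R}$; your version packages the same content as the standard braid-fan description and reads off both the ``image is a cone'' and ``every cone is hit'' statements from elementary facts about restricting and extending ordered set partitions. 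The only substantive point you should make explicit in the write-up is the one you already flag: that projection of the \emph{closed} cone for a weak ordering is \emph{onto} the closed cone for the restricted ordering, which requires the small check (when $\{i\}$ is a singleton block) that a compatible value of $v_i$ always exists---exactly the content of the paper's cell-based lifting.
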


\begin{proof}
Without loss of generality assume $i=m+1$. We will use the combinatorial interpretation of the cones in the quotient fan in Lemma~\ref{bijection-cones-subdivisions}. For a cone $\sigma$, denote by $\sigma^\circ$ its relative interior.

Let $\xi\in\mathcal{Q}(\Delta_1,m+1)$ and let us show that $\overline{p}_{m+1}(\xi)\in\mathcal{Q}(\Delta_1,m)$. If $[\mathbf{v}],[\mathbf{w}]\in\xi^\circ$, then the proof of \cite[Lemma~8.6]{ST21} guarantees that $\overline{p}_{m+1}([\mathbf{v}])$ and $\overline{p}_{m+1}([\mathbf{w}])$ lie in the relative interior of a cone $\eta\in\mathcal{Q}(\Delta_1,m)$. Therefore, $\overline{p}_{m+1}(\xi^\circ)\subseteq\eta^\circ$, and we want to show that actually equality holds. Let $[\mathbf{u}]=[u_1,\ldots,u_m]\in\eta^\circ$ and let $[\mathbf{v}]=[v_1,\ldots,v_{m+1}]\in\xi^\circ$. We know that $\mathcal{S}(p_{m+1}(\mathbf{v}))$ is equivalent to $\mathcal{S}(\mathbf{u})$. Consider the unique cell $C\in\mathcal{S}(p_{m+1}(\mathbf{v}))$ such that $-v_{m+1}\in C^\circ$. For $j\in\{1,\ldots,m\}$, we can find cones $\sigma_j\in\Sigma(v_j)$ such that $C=\cap_{j=1}^m\sigma_j$. Let $\tau_j\in\Sigma(u_j)$ such that $\sigma_j+v_j=\tau_j+u_j$, and consider the cell $D=\cap_{j=1}^m\tau_j\in\mathcal{S}(\mathbf{u})$ corresponding to $C$. Let us fix an arbitrary point $-x\in D^\circ$. Then $\mathcal{S}(\mathbf{u},x)$ is equivalent to $\mathcal{S}(\mathbf{v})$ (this is in general false for a polytope $P\neq\Delta_1$, see Remark~\ref{key-lemma-for-family-losev-manin-false-in-general}).

To prove this, let $\alpha_i\in\Sigma(u_i)$, $\alpha\in\Sigma(x)$ and $\beta_i\in\Sigma(v_i)$, $\beta\in\Sigma(v_{m+1})$ such that $\alpha_i+u_i=\beta_i+v_i$ and $\alpha+x=\beta+v_{m+1}$. If $(\cap_i\alpha_i)\cap\alpha\neq\emptyset$, then $\cap_i\alpha_i\neq\emptyset$. As $\mathcal{S}(\mathbf{u})$ is equivalent to $\mathcal{S}(p_{m+1}(\mathbf{v}))$, then also $\cap_i\beta_i\neq\emptyset$. Recall that $D$ is one of the cells of the subdivision $\mathcal{S}(\mathbf{u})$ of $\mathbb{R}^1$, and the same applies to $\cap_i\alpha_i$. Therefore, either $D=\cap_i\alpha_i$ or they have disjoint interiors. Let us discuss these possibilities individually.
\begin{itemize}

\item Assume that $\cap_i\alpha_i=D$. Then $\cap_i\beta_i=C$, which implies that $(\cap_i\beta_i)\cap\beta\neq\emptyset$ because $-v_{m+1}\in(\cap_i\beta_i)\cap\beta$.

\item Now suppose that $\cap_i\alpha_i$ and $D$ have disjoint interiors. Then $D$ is either on the left or right of $\cap_i\alpha_i$ with respect to the orientation given by the real line $\mathbb{R}$. Without loss of generality, say $D$ lies on the left of $\cap_i\alpha_i$. As $(\cap_i\alpha_i)\cap\alpha\neq\emptyset$, we must have that $\alpha=-x+\mathbb{R}_{\geq0}$. So also $\beta=-v_{m+1}+\mathbb{R}_{\geq0}$, and as $\mathcal{S}(\mathbf{u})$ is equivalent to $\mathcal{S}(\mathbf{v})$, then also $C$ lies on the left of $\cap_i\beta_i$. Hence $(\cap_i\beta_i)\cap\beta\neq\emptyset$.

\end{itemize}
We can argue that $(\cap_i\beta_i)\cap\beta\neq\emptyset$ implies $(\cap_i\alpha_i)\cap\alpha\neq\emptyset$ in an analogous way. As we showed that $\mathcal{S}(\mathbf{u},x)$ is equivalent to $\mathcal{S}(\mathbf{v})$, we can conclude that $[\mathbf{u},x]\in\xi^\circ$. In conclusion, $[\mathbf{u}]\in\overline{p}_{m+1}(\xi^\circ)$, so $\overline{p}_{m+1}(\xi^\circ)=\eta^\circ$. This shows that $\overline{p}_{m+1}(\mathcal{Q}(\Delta_1,m+1))\subseteq\mathcal{Q}(\Delta_1,m)$. Moreover, this is enough to conclude that $\overline{p}_{m+1}(\mathcal{Q}(\Delta_1,m+1))$ and $\mathcal{Q}(\Delta_1,m)$ are equal because they are both complete fans.
\end{proof}

\begin{remark}
\label{key-lemma-for-family-losev-manin-false-in-general}
The above lemma is in general false for polytopes $P$ different from $\Delta_1$. More precisely, if $\xi\in\mathcal{Q}(P,m+1)$, then $\overline{p}_i(\xi)$ is not necessarily a cone in $\mathcal{Q}(P,m)$. For example, let $P=\Delta_2$ and consider the cone $\xi\in\mathcal{Q}(\Delta_2,4)$ (resp. $\eta\in\mathcal{Q}(\Delta_2,3)$) corresponding to the subdivision of $\mathbb{R}^2$ in Figure~\ref{example-failure-key-lemma}~(1) (resp. (2)). Then $\overline{p}_4(\xi^\circ)\subsetneq\eta^\circ$, and an explicit example of $[\mathbf{u}]\in\eta^\circ\setminus\overline{p}_4(\xi^\circ)$ is the vector corresponding to Figure~\ref{example-failure-key-lemma}~(3). To prove this, suppose by contradiction that $\overline{p}_4([\mathbf{v}])=[p_4(\mathbf{v})]=[\mathbf{u}]$ for some $\mathbf{v}\in\xi^\circ$. Then, up to changing the representative for the class $[\mathbf{v}]$, we can assume that $p_4(\mathbf{v})=\mathbf{u}$. However, there is no choice of $v_4\in\mathbb{R}^2$ such that the subdivision $\mathcal{S}(\mathbf{u},v_4)=\mathcal{S}(\mathbf{v})$ is equivalent to the one in Figure~\ref{example-failure-key-lemma}~(1), creating a contradiction with the fact that $\mathbf{v}\in\xi^\circ$.
\end{remark}

\begin{figure}
\begin{tikzpicture}[scale=0.7]

	\draw[line width=1pt] (0,0) -- (0+5,0);
	\draw[line width=1pt] (0,0) -- (0,0+2);
	\draw[line width=1pt] (0,0) -- (0-1,0-1);
	\draw[dashed,line width=1pt] (0,0) -- (0+5,0+5);
	\fill (0,0) circle (4pt);

	\draw[line width=1pt] (1,2) -- (1+5,2);
	\draw[line width=1pt] (1,2) -- (1,2+1);
	\draw[line width=1pt] (1,2) -- (1-2,2-2);
	\fill (1,2) circle (4pt);

	\draw[line width=1pt] (5,0) -- (5+1,0);
	\draw[line width=1pt] (5,0) -- (5,0+5);
	\draw[line width=1pt] (5,0) -- (5-1,0-1);
	\fill (5,0) circle (4pt);

	\draw[line width=1pt] (4,3) -- (4+2,3);
	\draw[line width=1pt] (4,3) -- (4,3+2);
	\draw[line width=1pt] (4,3) -- (4-4,3-4);
	\fill (4,3) circle (4pt);

	\node at (0,-0.5) {$1$};
	\node at (1-0.5,2) {$2$};
	\node at (5,-0.5) {$3$};
	\node at (4-0.5,3) {$4$};

	\node at (2.5,-1.5) {(1)};

\end{tikzpicture}
\begin{tikzpicture}[scale=0.7]

	\draw[line width=1pt] (0,0) -- (0+5,0);
	\draw[line width=1pt] (0,0) -- (0,0+2);
	\draw[line width=1pt] (0,0) -- (0-1,0-1);
	\draw[dashed,line width=1pt] (0,0) -- (0+3,0+3);
	\fill (0,0) circle (4pt);

	\draw[line width=1pt] (1,2) -- (1+5,2);
	\draw[line width=1pt] (1,2) -- (1,2+1);
	\draw[line width=1pt] (1,2) -- (1-2,2-2);
	\fill (1,2) circle (4pt);

	\draw[line width=1pt] (5,0) -- (5+1,0);
	\draw[line width=1pt] (5,0) -- (5,0+3);
	\draw[line width=1pt] (5,0) -- (5-1,0-1);
	\fill (5,0) circle (4pt);

	\node at (0,-0.5) {$1$};
	\node at (1-0.5,2) {$2$};
	\node at (5,-0.5) {$3$};

	\node at (2.5,-1.5) {(2)};

\end{tikzpicture}
\begin{tikzpicture}[scale=0.7]

	\draw[line width=1pt] (0,0) -- (0+2,0);
	\draw[line width=1pt] (0,0) -- (0,0+2);
	\draw[line width=1pt] (0,0) -- (0-1,0-1);
	\draw[dashed,line width=1pt] (0,0) -- (0+3,0+3);
	\fill (0,0) circle (4pt);

	\draw[line width=1pt] (1,2) -- (1+2,2);
	\draw[line width=1pt] (1,2) -- (1,2+1);
	\draw[line width=1pt] (1,2) -- (1-2,2-2);
	\fill (1,2) circle (4pt);

	\draw[line width=1pt] (2,0) -- (2+1,0);
	\draw[line width=1pt] (2,0) -- (2,0+3);
	\draw[line width=1pt] (2,0) -- (2-1,0-1);
	\fill (2,0) circle (4pt);

	\node at (0,-0.5) {$1$};
	\node at (1-0.5,2) {$2$};
	\node at (2,-0.5) {$3$};

	\node at (1,-1.5) {(3)};

\end{tikzpicture}
\caption{Example in Remark~\ref{key-lemma-for-family-losev-manin-false-in-general} showing that Lemma~\ref{key-lemma-for-family-losev-manin} does not hold in general for $P\neq\Delta_1$.}
\label{example-failure-key-lemma}
\end{figure}
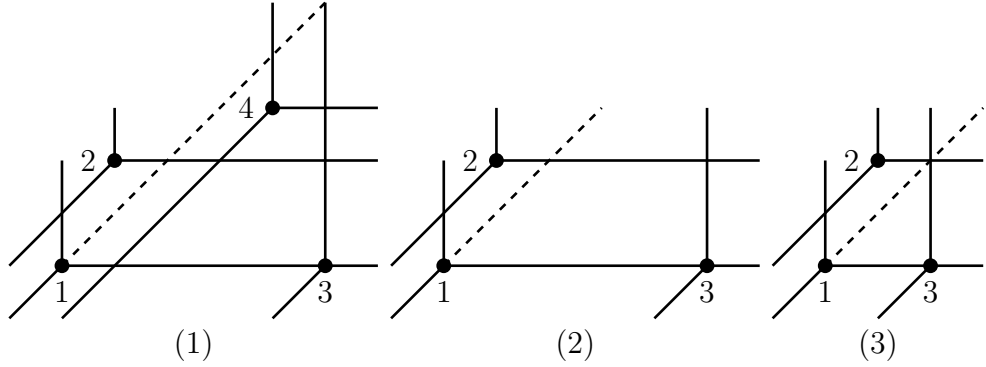

\begin{theorem}
\label{thm-losev-manin-family-revisited}
For $i=1,\ldots,m+1$, consider the diagram
\begin{center}
\begin{tikzpicture}[>=angle 90]
\matrix(a)[matrix of math nodes,
row sep=2em, column sep=2em,
text height=1.5ex, text depth=0.25ex]
{(\mathbb{R}^1)^{m+1}&(\mathbb{R}^1)^m\\
(\mathbb{R}^1)^{m+1}/\mathbb{R}^1&(\mathbb{R}^1)^m/\mathbb{R}^1,\\};
\path[->] (a-1-1) edge node[above]{$p_i$}(a-1-2);
\path[->] (a-1-1) edge node[left]{$q$}(a-2-1);
\path[->] (a-2-1) edge node[below]{$\overline{p}_i$}(a-2-2);
\path[->] (a-1-2) edge node[right]{$q$}(a-2-2);
\path[->] (a-2-1) edge node[above left]{$f_i$}(a-1-2);
\end{tikzpicture}
\end{center}
where $f_i([\mathbf{v}])=(v_1-v_i,\ldots,\widehat{v_i-v_i},\ldots,v_{m+1}-v_i)$. Then $f_i$ induces an isomorphism of fans $\mathcal{Q}(\Delta_1,m+1)\cong\mathcal{R}(\Delta_1,m)$.
\end{theorem}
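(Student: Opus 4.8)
The plan is to produce an explicit linear isomorphism of the ambient lattices that carries $\mathcal{Q}(\Delta_1,m+1)$ bijectively onto $\mathcal{R}(\Delta_1,m)$, and then verify cone-by-cone that it is a morphism of fans in both directions. The natural candidate is exactly the map $f_i$ appearing in the statement: $f_i\colon(\mathbb{R}^1)^{m+1}/\mathbb{R}^1\rightarrow(\mathbb{R}^1)^m$ given by $[\mathbf{v}]\mapsto(v_1-v_i,\ldots,\widehat{v_i-v_i},\ldots,v_{m+1}-v_i)$. First I would check that $f_i$ is a well-defined linear isomorphism of lattices: it is well-defined because differences $v_j-v_i$ are invariant under simultaneous translation, and it is injective because $f_i([\mathbf{v}])=0$ forces $v_j=v_i$ for all $j$, i.e.\ $[\mathbf{v}]=0$; both source and target have dimension $m$, so it is an isomorphism. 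The composition $q\circ f_i$ recovers $\overline{p}_i$ composed with the quotient, matching the commuting triangle in the diagram.

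\textbf{Key steps.} The second step is to recall from the construction in \S\ref{sec-toric-family-R(P,m)} that $\mathcal{R}(\Delta_1,m)=\{q^{-1}(\xi)\cap\tau\mid\xi\in\mathcal{Q}(\Delta_1,m),\ \tau\in\Sigma^m\}$, where now $\Sigma=\Sigma_{\Delta_1}$ is the complete fan on $\mathbb{R}$ with cones $\{0\},\mathbb{R}_{\geq0},\mathbb{R}_{\leq0}$, so $\Sigma^m$ is the coordinate-hyperplane-and-quadrant fan on $(\mathbb{R}^1)^m=\mathbb{R}^m$. Thus a cone of $\mathcal{R}(\Delta_1,m)$ records simultaneously a sign pattern (which orthant of $\mathbb{R}^m$ we sit in) and a cone $\xi$ of the quotient fan downstairs. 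The third step is to unwind what $f_i$ does to a cone $\xi\in\mathcal{Q}(\Delta_1,m+1)$: choosing $i=m+1$ without loss of generality and using Lemma~\ref{bijection-cones-subdivisions}, a cone $\xi$ is recorded by an equivalence class of subdivisions $\mathcal{S}(\mathbf{v})$ of $\mathbb{R}^1$; since $d=1$, such a subdivision is just a configuration of the points $-v_1,\ldots,-v_{m+1}\in\mathbb{R}$ together with their relative order (with ties) on the line. Applying $f_{m+1}$ translates everything so that $-v_{m+1}=0$; the data that survives is (a) the order of $-v_1,\ldots,-v_m$ relative to each other — this is a cone of $\mathcal{Q}(\Delta_1,m)$ by Lemma~\ref{key-lemma-for-family-losev-manin}, which is exactly the $\overline{p}_{m+1}$-image — together with (b) the position of each $v_j-v_{m+1}$ relative to $0$, i.e.\ the sign of $v_j-v_{m+1}$, which is precisely a choice of orthant cone $\tau\in\Sigma^m$. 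So the image $f_{m+1}(\xi)$ carries exactly a pair $(\xi',\tau)$ with $\xi'\in\mathcal{Q}(\Delta_1,m)$, and one checks $f_{m+1}(\xi)=q^{-1}(\xi')\cap\tau$: the inclusion $\subseteq$ is immediate, and the reverse inclusion is the heart of the argument, for which I would invoke Lemma~\ref{key-lemma-for-family-losev-manin} — specifically the part showing that any $[\mathbf{u}]\in\xi'^\circ$ together with any point $-x$ in the corresponding cell lifts to a point of $\xi^\circ$ (which is exactly the statement, special to $\Delta_1$, that $\mathcal{S}(\mathbf{u},x)$ is equivalent to $\mathcal{S}(\mathbf{v})$).

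\textbf{Finishing.} Running through all cones $\xi$ of $\mathcal{Q}(\Delta_1,m+1)$ this exhibits $f_{m+1}$ as a bijection between the cones of $\mathcal{Q}(\Delta_1,m+1)$ and the cones $q^{-1}(\xi')\cap\tau$ of $\mathcal{R}(\Delta_1,m)$; since $f_{m+1}$ is a linear isomorphism of lattices and it maps the set of cones of one fan bijectively onto the set of cones of the other, it is an isomorphism of fans. The surjectivity onto \emph{all} cones of $\mathcal{R}(\Delta_1,m)$ follows because every pair $(\xi',\tau)$ compatible in the sense that $q^{-1}(\xi')\cap\tau\neq\emptyset$ arises: given such a pair, pick $[\mathbf{u}]$ in the interior of $\xi'$ and a point $-x$ in the appropriate cell, form $\mathbf{v}=(\mathbf{u}+x\cdot\mathbf{1},\,x)$ — wait, more precisely $\mathbf{v}=(u_1+x,\ldots,u_m+x,\,x)$ up to sign conventions — and observe $[\mathbf{v}]$ lies in a cone $\xi\in\mathcal{Q}(\Delta_1,m+1)$ with $f_{m+1}(\xi)=q^{-1}(\xi')\cap\tau$. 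The main obstacle is the reverse inclusion $q^{-1}(\xi')\cap\tau\subseteq f_{m+1}(\xi)$, equivalently the surjectivity of $f_{m+1}$ at the level of cone interiors; but this is precisely what the delicate case analysis in the proof of Lemma~\ref{key-lemma-for-family-losev-manin} establishes (and Remark~\ref{key-lemma-for-family-losev-manin-false-in-general} shows why no analogous statement holds for general $P$, which is why the recursion $\mathcal{R}(P,m)\cong\mathcal{Q}(P,m+1)$ fails beyond $\Delta_1$). Once that inclusion is in hand, the identification $R(\Delta_1,m)\cong$ the $m$-dimensional permutohedron is immediate, since $Q(\Delta_1,m+1)$ is the $m$-dimensional permutohedron by Billera--Sturmfels.
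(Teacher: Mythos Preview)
Your proposal is essentially the same approach as the paper's: both use the explicit linear isomorphism $f_{m+1}$, identify $f_{m+1}(\xi)=q^{-1}(\overline{p}_{m+1}(\xi))\cap\tau_\xi$ where $\tau_\xi\in\Sigma^m$ is the orthant recording the signs of $v_j-v_{m+1}$, handle the inclusion $\subseteq$ directly, and use the $\Delta_1$-specific lifting property for the reverse inclusion; the paper then concludes surjectivity from completeness of both fans, while you sketch an explicit preimage construction, but these are equivalent.

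One technical point worth flagging: you say the reverse inclusion is ``precisely what the delicate case analysis in the proof of Lemma~\ref{key-lemma-for-family-losev-manin} establishes,'' but that lemma only shows that for $[\mathbf{u}]\in\eta^\circ$ and $-x\in D^\circ$ one has $[\mathbf{u},x]\in\xi^\circ$. In the theorem you need $[w_1,\ldots,w_m,\delta]\in\xi$ where $-\delta$ may lie on the \emph{boundary} of the cell $C$ (this happens exactly when the given point lies on the boundary of the orthant $\tau_\xi$). The paper handles this with a separate three-case analysis inside the theorem's proof---when $C=\{-w_{m+1}\}$, when $-\delta\in C^\circ$, and when $-\delta\in C\setminus C^\circ$---rather than by citing the lemma. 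You could alternatively argue by closure (show the inclusion on relative interiors and pass to closures, using that $f_{m+1}$ is linear and cones are closed), but as written your sketch elides this step.
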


\begin{proof}
(Note that $q\circ p_i=\overline{p}_i\circ q$, $\overline{p}_i=q\circ f_i$, but $p_i\neq f_i\circ q$.) Without loss of generality, let $i=m+1$. We start with a preliminary construction. Let $\xi\in\mathcal{Q}(\Delta_1,m+1)$. Let $\mathbf{v}$ be a vector in the relative interior $\xi^\circ$ of $\xi$. For each $i=1,\ldots,m$ define $\tau_{\xi,i}^\mathbf{v}\in\Sigma$ as follows: if $v_{m+1}=v_i$, let $\tau_{\xi,i}^\mathbf{v}=\{0\}$, otherwise let $\tau_{\xi,i}^\mathbf{v}$ be the unique half-line such that $-v_{m+1}\in-v_i+\tau_{\xi,i}^\mathbf{v}$. Define $\tau_\xi^\mathbf{v}=\tau_{\xi,1}^\mathbf{v}\times\ldots\times\tau_{\xi,m}^\mathbf{v}$. The cone $\tau_\xi^\mathbf{v}$ has the following properties.

\begin{enumerate}

\item $\tau_\xi^\mathbf{v}$ is independent from the choice of $\mathbf{v}\in\xi^\circ$. To show this, let $\mathbf{w}$ be another vector in $\xi^\circ$. We want to show that $\tau_{\xi,i}^\mathbf{v}=\tau_{\xi,i}^\mathbf{w}$ for $i=1,\ldots,m$. The fact that $\mathcal{S}(\mathbf{v})$ is equivalent to $\mathcal{S}(\mathbf{w})$ implies that that $v_i=v_{m+1}$ if and only if $w_i=w_{m+1}$. In this case, $\tau_{\xi,i}^\mathbf{v}=\{0\}=\tau_{\xi,i}^\mathbf{w}$. So let us assume that $v_i\neq v_{m+1}$ and $w_i\neq w_{m+1}$. In this case, both $\tau_{\xi,i}^\mathbf{v},\tau_{\xi,i}^\mathbf{w}$ are half-lines in $\mathbb{R}^1$ centered at $0$. We need to show they are both equal to $\mathbb{R}_{\geq0}$ or $\mathbb{R}_{\leq0}$. Without loss of generality, assume that $\tau_{\xi,i}^\mathbf{v}=\mathbb{R}_{\geq0}$. As $\mathcal{S}(\mathbf{v})$ is equivalent to $\mathcal{S}(\mathbf{w})$ and $\{-v_{m+1}\}\cap(-v_i+\mathbb{R}_{\geq0})=\{-v_{m+1}\}\neq\emptyset$, then $\{-w_{m+1}\}\cap(-w_i+\mathbb{R}_{\geq0})\neq\emptyset$ as well, so $\tau_{\xi,i}^\mathbf{w}=\mathbb{R}_{\geq0}$ by definition. As we showed $\tau_{\xi,i}^\mathbf{v}=\tau_{\xi,i}^\mathbf{w}$ for any $\mathbf{v},\mathbf{w}\in\xi^\circ$, we will denote this cone simply be $\tau_\xi$.

\item A simple consequence of the above property which we will use is the following. If $\mathbf{w}\in\xi^\circ$, then $-w_{m+1}\in-w_i+\tau_{\xi,i}$ for $i=1,\ldots,m$.

\item If $\mathbf{w}\in\xi^\circ$, consider the cell $C=\cap_{i=1}^m(-w_i+\tau_{\xi,i})\in\mathcal{S}(p_{m+1}(\mathbf{w}))$. Then $C=\{-w_{m+1}\}$ or $\{-w_{m+1}\}\subsetneq C^\circ$: the first case occurs if $w_{m+1}=w_i$ for some $i\in\{1,\ldots,m\}$, otherwise we have the latter.

\end{enumerate}

By definition $q^{-1}(\overline{p}_{m+1}(\xi))\cap\tau_\xi$ is a cone in $\mathcal{R}(\Delta_1,m)$ since $\overline{p}_{m+1}(\xi)\in\mathcal{Q}(\Delta_1,m)$ by Lemma~\ref{key-lemma-for-family-losev-manin}. We claim that
\[
f_{m+1}(\xi)=q^{-1}(\overline{p}_{m+1}(\xi))\cap\tau_\xi.
\]
First of all, the commutativity of the lower-right triangle of the above diagram yields $q(f_{m+1}(\xi))=\overline{p}_{m+1}(\xi)$, so $f_{m+1}(\xi)\subseteq q^{-1}(\overline{p}_{m+1}(\xi))$. Moreover, for any $\mathbf{v}\in\xi$, $f_{m+1}(\mathbf{v})=(v_1-v_{m+1},\ldots,v_m-v_{m+1})$, and we know that $-v_{m+1}\in-v_i+\tau_{\xi,i}$ for all $i=1,\ldots,m$. So $(v_1-v_{m+1},\ldots,v_m-v_{m+1})\in\tau_\xi$. This shows that $f_{m+1}(\xi)\subseteq\tau_\xi$, and hence $f_{m+1}(\xi)\subseteq q^{-1}(\overline{p}_{m+1}(\xi))\cap\tau_\xi$.

To prove the second containment, it will be enough to show that $q^{-1}(\overline{p}_{m+1}(\xi^\circ))\cap\tau_\xi\subseteq f_{m+1}(\xi)$. Let $(v_1,\ldots,v_m)\in q^{-1}(\overline{p}_{m+1}(\xi^\circ))\cap\tau_\xi$. So $[v_1,\ldots,v_m]\in\overline{p}_{m+1}(\xi^\circ)$, hence we can write $[v_1,\ldots,v_m]=\overline{p}_{m+1}([w_1,\ldots,w_{m+1}])=[w_1,\ldots,w_m]$ for some $[w_1,\ldots,w_{m+1}]\in\xi^{\circ}$. So there exists $\delta\in\mathbb{R}^1$ such that $v_i+\delta=w_i$ for $i=1,\ldots,m$. As $[w_1,\ldots,w_{m+1}]\in\xi^\circ$, we have that $-w_{m+1}\in-w_i+\tau_{\xi,i}$ for $i=1,\ldots,m$ by (2) above. Notice that $(v_1,\ldots,v_m)\in\tau_\xi$ by assumption, so $0\in-v_i+\tau_{\xi,i}$ for all $i=1,\ldots,m$, and hence $-\delta\in-\delta-v_i+\tau_{\xi,i}=-w_i+\tau_{\xi,i}$. Let $\cap_{i=1}^m(-w_i+\tau_{\xi,i})=C$, which by (3) above is equal to $\{-w_{m+1}\}$ or $\{-w_{m+1}\}\subsetneq C^\circ$. As $-\delta\in C$, three possibilities can occur:
\begin{itemize}
\item If $C=\{-w_{m+1}\}$, then $\delta=w_{m+1}$, hence $[w_1,\ldots,w_m,\delta]\in\xi^\circ$. So in what follows assume that $\{-w_{m+1}\}\subsetneq C^\circ$.

\item If $-\delta\in C^\circ$, then $[w_1,\ldots,w_m,\delta]\in\xi^\circ$ because $\mathcal{S}(w_1,\ldots,w_m,\delta)$ is equivalent to $\mathcal{S}(\mathbf{w})$. (Here we used that we are working with $\Delta_1$. This is not true if we take higher dimensional polytopes. For instance, if $P=\Delta_2$ and $m=1$, let $v=(0,0),x=(-1,-2),y=(-2,-1)$. Although $-x,-y$ belong to the relative interior of the same cell in $\mathcal{S}(v)$, we have that $\mathcal{S}(v,x)$ is not equivalent to $\mathcal{S}(v,y)$.)

\item If $-\delta\in C\setminus C^\circ$, then $[w_1,\ldots,w_m,\delta]$ belongs to a face of $\xi$.

\end{itemize}
In either case, we can conclude that $[w_1,\ldots,w_m,\delta]\in\xi$. Since $f_{m+1}([w_1,\ldots,w_m,\delta])=(w_1-\delta,\ldots,w_m-\delta)=(v_1,\ldots,v_m)$, we have that $(v_1,\ldots,v_m)\in f_{m+1}(\xi)$, proving that $q^{-1}(\overline{p}_{m+1}(\xi^\circ))\cap\tau_\xi\subseteq f_{m+1}(\xi)$.

So far we proved that $f_{m+1}(\mathcal{Q}(\Delta_1,m+1))\subseteq\mathcal{R}(\Delta_1,m)$. As $f_{m+1}$ is an isomorphism and $\mathcal{Q}(\Delta_1,m+1)$ is a complete fan, then $f_{m+1}(\mathcal{Q}(\Delta_1,m+1))$ is a complete fan, hence $f_{m+1}(\mathcal{Q}(\Delta_1,m+1))$ and $\mathcal{R}(\Delta_1,m)$ coincide. The fact that $f_{m+1}$ is an isomorphism also implies that we cannot have different cones in $\mathcal{Q}(\Delta_1,m+1))$ mapping to the same cone in $\mathcal{R}(\Delta_1,m)$. In conclusion, $f_{m+1}$ induces the claimed isomorphism of fans.
\end{proof}

\begin{corollary}
For $m\geq1$, the $m$-dimensional permutohedron $Q(\Delta_1,m+1)$ is a generalized $\pi$-twisted Cayley sum over $m\Delta_1$ of $m+1$ polytopes $R_1,\ldots,R_{m+1}$ normally equivalent to $Q(\Delta_1,m)$ such that $\{\pi(R_1),\ldots,\pi(R_{m+1})\}=m\Delta_1\cap\mathbb{Z}$.
\end{corollary}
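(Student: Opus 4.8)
The plan is to derive the statement by specializing Theorem~\ref{thm-R(P,m)-is-gen-twisted-Cayley-sum} to $P=\Delta_1$ and then transporting the resulting structure across the fan isomorphism of Theorem~\ref{thm-losev-manin-family-revisited}. Since $\dim(\Delta_1)=1$, the polytope $m\Delta_1$ is the segment $[0,m]$, so $m\Delta_1\cap\mathbb{Z}=\{0,1,\ldots,m\}$ has exactly $m+1$ elements. By Theorem~\ref{thm-R(P,m)-is-gen-twisted-Cayley-sum}, $R(\Delta_1,m)$ is a generalized $\delta^*$-twisted Cayley sum with $\delta^*(R(\Delta_1,m))=m\Delta_1$, and, reading off its proof, one may take the constituent polytopes to be the fibers $R(\Delta_1,m)_v$ of $\delta^*$ over the $m+1$ lattice points $v\in m\Delta_1\cap\mathbb{Z}$; by that proof each such fiber is normally equivalent to $Q(\Delta_1,m)$, and, using $R(\Delta_1,m)=\Delta_1^m+q^*(Q(\Delta_1,m))$ together with the fact that every $0/1$-vertex of the cube $\Delta_1^m$ lies on one of the hyperplanes $x_1+\cdots+x_m=k$, $k=0,\ldots,m$, one checks that $R(\Delta_1,m)$ is the convex hull of these $m+1$ fibers. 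Labelling them $R_1,\ldots,R_{m+1}$ gives $\{\delta^*(R_1),\ldots,\delta^*(R_{m+1})\}=m\Delta_1\cap\mathbb{Z}$.

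It then remains to pass from $R(\Delta_1,m)$ to $Q(\Delta_1,m+1)$. By Theorem~\ref{thm-losev-manin-family-revisited} the normal fans $\mathcal{R}(\Delta_1,m)$ and $\mathcal{Q}(\Delta_1,m+1)$ are isomorphic, and $Q(\Delta_1,m+1)$ is the $m$-dimensional permutohedron by \cite{BS92}. I would upgrade this to a lattice isomorphism identifying the polarized polytopes $R(\Delta_1,m)$ and $Q(\Delta_1,m+1)$ compatibly with their $\delta^*$-fibrations over $m\Delta_1$; then the generalized $\delta^*$-twisted Cayley sum structure of $R(\Delta_1,m)$ carries over verbatim to $Q(\Delta_1,m+1)$, with $\pi$ the image of $\delta^*$ under the isomorphism and $\{\pi(R_1),\ldots,\pi(R_{m+1})\}=m\Delta_1\cap\mathbb{Z}$. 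Concretely, $\Delta_1^m$ is the zonotope on the segments $[0,e_i]$, $1\le i\le m$, and $Q(\Delta_1,m)$ is the $(m-1)$-permutohedron, i.e.\ the zonotope on the $[e_i,e_j]$, $1\le i<j\le m$, so $R(\Delta_1,m)=\Delta_1^m+q^*(Q(\Delta_1,m))$ is the zonotope on the $\binom{m+1}{2}$ segments $[e_i,e_j]$, $1\le i<j\le m+1$, which is precisely the $m$-permutohedron $Q(\Delta_1,m+1)$.

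The only genuinely non-formal point is this last identification: matching the distinguished \emph{polarizations}, not merely the normal fans, on the two sides. A fan isomorphism identifies the nef cones of $X_{\mathcal{R}(\Delta_1,m)}$ and $X_{\mathcal{Q}(\Delta_1,m+1)}$ but not a priori a chosen ample class, so one must check that the polytope $R(\Delta_1,m)$ induced by the embedding $X_{\mathcal{R}(\Delta_1,m)}\hookrightarrow(\mathbb{P}^1)^m\times X_{\mathcal{Q}(\Delta_1,m)}$ and the fiber polytope $Q(\Delta_1,m+1)$ are the same point of that cone. The zonotopal computation above --- equivalently the Minkowski-sum formula for $R(\Delta_1,m)$ from the proof of Theorem~\ref{thm-R(P,m)-is-gen-twisted-Cayley-sum}, which is also the source of the remark after Theorem~\ref{thm-losev-manin-family-revisited} that $R(\Delta_1,m)$ is the $m$-permutohedron --- settles it, and the corollary follows.
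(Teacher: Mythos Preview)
Your proof is correct and follows the paper's route of combining Theorem~\ref{thm-R(P,m)-is-gen-twisted-Cayley-sum} with Theorem~\ref{thm-losev-manin-family-revisited}. The only substantive differences are: (i) the paper extracts $k=m+1$ by a vertex count --- $Q(\Delta_1,m+1)$ has $(m+1)!$ vertices (a normal-fan invariant, so the fan isomorphism alone suffices for this), and a Cayley sum of $k$ polytopes normally equivalent to $Q(\Delta_1,m)$ has $k\cdot m!$ vertices --- whereas you read $k=m+1$ directly from $|m\Delta_1\cap\mathbb{Z}|$; and (ii) you explicitly justify the passage from fan isomorphism to polytope identification via the zonotope description of the permutohedron, a point the paper's two-line proof takes for granted. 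Your extra care in (ii) is warranted: the vertex count in (i) needs only the normal fan, but concluding that $Q(\Delta_1,m+1)$ \emph{itself} (and not merely $R(\Delta_1,m)$) carries the Cayley-sum structure over $m\Delta_1$ does require the polytope-level identification you supply.
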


\begin{proof}
Combining Theorem~\ref{thm-R(P,m)-is-gen-twisted-Cayley-sum} with Theorem~\ref{thm-losev-manin-family-revisited} we have that $Q(\Delta_1,m+1)$ is a generalized $\pi$-twisted Cayley sum over $m\Delta_1$ of $k$ polytopes $R_1,\ldots,R_k$ normally equivalent to $Q(\Delta_1,m)$. As $m\Delta_1$ contains $m+1$ lattice points, we have that $k\leq m+1$. On the other hand, $Q(\Delta_1,m+1)$ has $(m+1)!$ vertices, so $(m+1)!\leq k\cdot m!$, where $m!$ is equal to the number of vertices of $R_i$, $i=1,\ldots,k$. This implies that $k=m+1$.
\end{proof}


\section{Explicit examples}
\label{explicit-examples}

It is known that $\mathcal{Q}(\Delta_1,m)$ is a smooth fan. We will see that this property extends to $\mathcal{Q}(\Delta_d,2)$, where $\Delta_d$ is the $d$-dimensional simplex. Recall that the smoothness of $\mathcal{Q}(P,m)$ implies that $X_{\mathcal{R}(P,m)}\rightarrow X_{\mathcal{Q}(P,m)}$ is flat by Proposition~\ref{toric-family-is-equidimensional}. We start with the following general lemma.

\begin{lemma}
\label{explicit-quotient-fan-for-m=2}
Let $P\subseteq\mathbb{R}^d$ be a lattice polytope and consider the quotient fan $\mathcal{Q}(P,2)$ in $(\mathbb{R}^d)^2/\mathbb{R}^d$. Consider the identification $(\mathbb{R}^d)^2/\mathbb{R}^d\cong\mathbb{R}^d$ given by $[v,w]\mapsto v-w$. Then $\mathcal{Q}(P,2)$ equals the common refinement of $\Sigma_P$ and $-\Sigma_P$.
\end{lemma}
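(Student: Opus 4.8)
The plan is to unwind the definition of $\mathcal{Q}(P,2)$ and match it against the common refinement of $\Sigma_P$ and $-\Sigma_P$. Under the identification $(\mathbb{R}^d)^2/\mathbb{R}^d\cong\mathbb{R}^d$, $[v,w]\mapsto v-w$, the quotient map $q$ becomes the difference map $(v,w)\mapsto v-w$, so for cones $\rho_1,\rho_2\in\Sigma_P$ the image $q(\rho_1\times\rho_2)$ is the Minkowski sum $\rho_1+(-\rho_2)$. Hence $\mathcal{Q}(P,2)$ is the coarsest fan having every such cone $\rho_1+(-\rho_2)$ ($\rho_1,\rho_2\in\Sigma_P$) as a union of its cones.

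One inclusion is then immediate: since $\{0\}\in\Sigma_P$, every cone of $\Sigma_P$ occurs in this collection as $\rho_1+(-\{0\})$ and every cone of $-\Sigma_P$ as $\{0\}+(-\rho_2)$; therefore $\mathcal{Q}(P,2)$ refines both $\Sigma_P$ and $-\Sigma_P$, hence refines their common refinement. For the reverse inclusion, by minimality of $\mathcal{Q}(P,2)$ it is enough to show that each $\rho_1+(-\rho_2)$ is already a union of cones of the common refinement of $\Sigma_P$ and $-\Sigma_P$, which is the normal fan of the difference body $P+(-P)$; combining the two inclusions gives equality.

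To carry out this last step I would use Lemma~\ref{bijection-cones-subdivisions}: it suffices to check that if $u,u'\in\mathbb{R}^d$ lie in the relative interior of the same cone of the common refinement of $\Sigma_P$ and $-\Sigma_P$ — equivalently, $u,u'$ have the same minimal cone in $\Sigma_P$ and $-u,-u'$ have the same minimal cone in $\Sigma_P$ — then the subdivisions $\mathcal{S}(u,0)$ and $\mathcal{S}(u',0)$ are equivalent, which unwinds to the statement $u\in\rho_1+(-\rho_2)\iff u'\in\rho_1+(-\rho_2)$ for all $\rho_1,\rho_2\in\Sigma_P$. Concretely, $\rho_1+(-\rho_2)$ is a polyhedral cone whose extreme rays are among the rays of $\Sigma_P$ and of $-\Sigma_P$ (the generators of $\rho_1$ together with those of $-\rho_2$), and each of its facets has the form (a face of $\rho_1$) $+$ (a face of $-\rho_2)=\sigma^P_{F_1'}+(-\sigma^P_{F_2'})$; one then checks that the supporting hyperplane of such a facet is orthogonal to a line lying in the direction spaces of the two faces $F_1',F_2'$ of $P$, which must be an edge direction of $P$ and hence a wall direction of the normal fan of $P+(-P)$, so that $\rho_1+(-\rho_2)$ cannot be cut transversally by a maximal cone of that fan. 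An alternative is the fiber-polytope description: $Q(P,2)$ is the Minkowski integral over $b\in 2P$ of the fibers $P\cap(b-P)$, all of whose normal fans are refined by the common refinement of $\Sigma_P$ and $-\Sigma_P$, while taking $b=2p$ with $p$ in the relative interior of an edge of $P$ recovers each codimension-one cone of that refinement, forcing the two fans to coincide.

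The main obstacle I anticipate is exactly this verification — ruling out that the images $\rho_1+(-\rho_2)$, in particular the lower-dimensional ones, contribute cones not already present in the common refinement of $\Sigma_P$ and $-\Sigma_P$, i.e. showing that $\mathcal{Q}(P,2)$ is not strictly finer than that refinement. The remaining steps (the identification of $q$ with the difference map, and the easy inclusion via $\{0\}\in\Sigma_P$) are purely formal manipulations with fans.
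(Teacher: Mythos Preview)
Your overall strategy coincides with the paper's: establish mutual refinement between $\mathcal{Q}(P,2)$ and the common refinement $\mathcal{F}$ of $\Sigma_P$ and $-\Sigma_P$. The easy direction is handled identically --- both you and the paper note that $\{0\}\in\Sigma_P$, so every cone of $\Sigma_P$ and of $-\Sigma_P$ already appears among the images $q(\Sigma_P^2)$, forcing $\mathcal{Q}(P,2)$ to refine $\mathcal{F}$.

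For the reverse direction the paper is far more direct than either of your proposed routes. It simply observes that for any $\tau_1,\tau_2\in\Sigma_P$ the image $q(\tau_1\times\tau_2)=\tau_1-\tau_2$ contains both $\tau_1$ and $-\tau_2$, hence contains the $\mathcal{F}$-cone $\tau_1\cap(-\tau_2)$; from this it concludes that $\mathcal{F}$ refines the images in $q(\Sigma_P^2)$, and therefore refines their common refinement $\mathcal{Q}(P,2)$. So the single containment $\tau_1\cap(-\tau_2)\subseteq\tau_1-\tau_2$ replaces your appeal to Lemma~\ref{bijection-cones-subdivisions} (analyzing $\mathcal{S}(u,0)\sim\mathcal{S}(u',0)$), your facet analysis of $\rho_1+(-\rho_2)$, and the fiber-polytope alternative. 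What you flag as the ``main obstacle'' --- that each $\rho_1+(-\rho_2)$ is a union of $\mathcal{F}$-cones --- is exactly the content the paper dispatches with that one containment. Your caution here is not unreasonable: the paper's passage from ``$\tau_1\cap(-\tau_2)\subseteq\tau_1-\tau_2$ for every pair'' to ``$\mathcal{F}$ refines every image'' is stated tersely, and one could ask for a sentence more; but the intended argument does not require the heavier machinery you bring in.
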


\begin{proof}
Denote by $\mathcal{F}$ the common refinement of $\Sigma_P$ and $-\Sigma_P$. Note that
\[
\mathcal{F}=\{\tau_1\cap(-\tau_2)\mid\tau_1,\tau_2\in\Sigma_P\}.
\]
We show that $\mathcal{Q}(P,2)$ and $\mathcal{F}$ refine each other. First of all, $q(\Sigma_P\times\{0\})=\Sigma_P$ and $q(\{0\}\times\Sigma_P)=-\Sigma_P$. So, as the cones in $\Sigma_P$ and $-\Sigma_P$ are some of the images that arise in $q(\Sigma_P^2)$, we have that $\mathcal{Q}(P,2)$ refines $\mathcal{F}$. Conversely, let $\tau_1,\tau_2\in\Sigma_P$. Then
\[
q(\tau_1\times\tau_2)=\tau_1-\tau_2,
\]
which contains both $\tau_1$ and $-\tau_2$. So the cone in $\mathcal{F}$ given by $\tau_1\cap(-\tau_2)$ is contained in $q(\tau_1\times\tau_2)$. This implies that $\mathcal{F}$ is a fan that refines the images in $q(\Sigma_P^2)$. As the fan $\mathcal{Q}(P,2)$ is the common refinement of the images in $q(\Sigma_P^2)$, we must have that $\mathcal{F}$ refines $\mathcal{Q}(P,2)$ as well.
\end{proof}

\begin{example}
Let $P$ be the lattice polytope $\Conv((0,0),(3,0),(1,1),(0,1))$, so that $X_P$ is isomorphic to the Hirzebruch surface $\mathbb{F}_2$. The normal fan $\Sigma_{P}$ has rays given by $e_1$, $e_2$, $-e_2$, and $-e_1-2e_2$. Then $\mathcal{Q}(P,2)$ is not a smooth fan, as it contains the singular cone generated by $e_1,e_1+2e_2$.
\end{example}

\begin{example}
\label{ex-f-flat-and-with-non-red-fibers}
Let $\ell_1,\ell_2\subseteq\mathbb{P}^1\times\mathbb{P}^1$ be two incident rulings away from the torus fixed points $p_1,\ldots,p_4\in\mathbb{P}^1\times\mathbb{P}^1$. Let $Y$ be the blow up of $\mathbb{P}^1\times\mathbb{P}^1$ at $p_1,\ldots,p_4$ and denote by $E_i\subseteq Y$ the exceptional divisor over $p_i$. Let $P$ be the lattice polytope corresponding to the toric variety $Y$ endowed with the ample polarization $3\ell_1+3\ell_2-E_1-\ldots-E_4$. As $\Sigma_P=-\Sigma_P$, by Lemma~\ref{explicit-quotient-fan-for-m=2} we have that $X_{\mathcal{Q}(P,2)}\cong X_P$, which is smooth. Hence, the toric morphism $f\colon X_{\mathcal{R}(P,2)}\rightarrow X_{\mathcal{Q}(P,2)}$ is flat by Proposition~\ref{toric-family-is-equidimensional}. On the other hand, by Theorem~\ref{thm-criterion-for-reduced-fibers} we have that $f$ has non-reduced fibers. This is because if $\mathbf{v}=(0,0,-1,0)\in(\mathbb{Z}^2)^2$, then $\mathcal{S}(\mathbf{v})$ has a cell consisting of the single point $\left(\frac{1}{2},\frac{1}{2}\right)$.
\end{example}

\begin{corollary}
\label{blow-up-torus-fixed-poins-projective-space}
Let $\Delta_d$ be the $d$-dimensional simplex. Then the toric variety $X_{\mathcal{Q}(\Delta_d,2)}$ is isomorphic to the blow up of $\mathbb{P}^d$ at the $d+1$ torus fixed points. In particular, $X_{\mathcal{Q}(\Delta_d,2)}$ is smooth.
\end{corollary}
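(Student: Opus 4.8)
The plan is to feed Lemma~\ref{explicit-quotient-fan-for-m=2} into an explicit comparison with the fan of the blow-up. By that lemma, under the identification $(\mathbb{R}^d)^2/\mathbb{R}^d\cong\mathbb{R}^d$, $[v,w]\mapsto v-w$, the fan $\mathcal{Q}(\Delta_d,2)$ is the common refinement of $\Sigma_{\Delta_d}$ and $-\Sigma_{\Delta_d}$. Fix the usual presentation of the fan of $\mathbb{P}^d=X_{\Delta_d}$: ray generators $u_0=-(e_1+\cdots+e_d)$ and $u_i=e_i$ for $i=1,\dots,d$, and maximal cones $\sigma_i=\mathrm{cone}(u_j:j\neq i)$ for $i=0,\dots,d$, with $\sigma_i$ corresponding to the torus-fixed point $x_i$. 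The relation $u_0+u_1+\cdots+u_d=0$ shows that the sum of the ray generators of $\sigma_i$ equals $b_i:=\sum_{j\neq i}u_j=-u_i$. Hence the ray generators of $-\Sigma_{\Delta_d}$ are exactly $b_0=u_1+\cdots+u_d$ and $b_1=-u_1,\dots,b_d=-u_d$: they are precisely the barycentric rays of the maximal cones of $\Sigma_{\Delta_d}$, and each $b_i$ lies in the relative interior of $\sigma_i$.

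Next I would use the standard fact that blowing up $X_{\Delta_d}=\mathbb{P}^d$ at the torus-fixed point $x_i$ is realized by the star subdivision of $\Sigma_{\Delta_d}$ at $b_i$ \cite{Ful93}. Because $b_0,\dots,b_d$ lie in the interiors of pairwise distinct maximal cones, performing all $d+1$ star subdivisions is independent of the order and produces a single fan $\Sigma'$ with $X_{\Sigma'}\cong\mathrm{Bl}_{x_0,\dots,x_d}\mathbb{P}^d$; moreover $\Sigma'$ is smooth, being obtained from the smooth fan of $\mathbb{P}^d$ by successively starring at sums of generators of smooth maximal cones. The corollary therefore reduces to the fan identity $\Sigma'=\Sigma_{\Delta_d}\wedge(-\Sigma_{\Delta_d})=\mathcal{Q}(\Delta_d,2)$.

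To establish this identity I would describe both fans by linear inequalities and match their maximal cones. Writing $x_0:=0$, one checks $\sigma_i=\{x\in\mathbb{R}^d:x_i=\min(x_0,x_1,\dots,x_d)\}$ and hence $-\sigma_j=\{x:x_j=\max(x_0,x_1,\dots,x_d)\}$, so the maximal cones of the common refinement are the full-dimensional intersections $\sigma_i\cap(-\sigma_j)$ with $i\neq j$. On the other hand, the star subdivision of $\sigma_i$ at $b_i$ produces, for each index $m\neq i$, the $d$-dimensional cone $\mathrm{cone}(b_i,\{u_k:k\neq i,\,k\neq m\})$ spanned by $b_i$ together with the facet $\mathrm{cone}(u_k:k\neq i,\,k\neq m)$ of $\sigma_i$. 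The crux is to show that these cones are exactly the full-dimensional cones $\sigma_i\cap(-\sigma_j)$: inclusion ``$\subseteq$'' is a direct check that the listed generators satisfy the min/max inequalities cutting out the appropriate $\sigma_i\cap(-\sigma_j)$, and ``$\supseteq$'' follows by verifying that the extreme rays of $\sigma_i\cap(-\sigma_j)$ all lie among the generators $\{b_i\}\cup\{u_k\}$ appearing in the star subdivision. Matching the two lists of maximal cones then yields $\Sigma'=\mathcal{Q}(\Delta_d,2)$, and smoothness follows from the second paragraph. I expect the main obstacle to be precisely this matching step: concretely, one must check that each cone produced by a star subdivision is contained in a single maximal cone of the reflected fan $-\Sigma_{\Delta_d}$, so that the star subdivision is already the full common refinement and, in particular, so that $\Sigma_{\Delta_d}\wedge(-\Sigma_{\Delta_d})$ is simplicial. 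I would carry out this verification using the ``phantom coordinate $x_0=0$'' description above, treating the cases $i=0$ and $i\geq1$ (and likewise $j=0$ versus $j\geq1$) separately, since these govern which of the defining inequalities of $\sigma_i$ and $-\sigma_j$ are active.
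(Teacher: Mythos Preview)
Your approach coincides with the paper's: both invoke Lemma~\ref{explicit-quotient-fan-for-m=2} and reduce to the fan identity $\Sigma'=\Sigma_{\Delta_d}\wedge(-\Sigma_{\Delta_d})$, where $\Sigma'$ is obtained by star-subdividing each maximal cone of $\Sigma_{\Delta_d}$ at $b_i=-u_i$. You are right to single out the matching step as the crux, and your min/max description is exactly the right tool; the problem is that the check you propose \emph{fails} for $d\ge 3$. Inside $\sigma_0$, the star-subdivision cone $\mathrm{cone}(b_0,\{u_k:k\neq 0,m\})$ is $\{x:0\le x_m\le x_k\text{ for all }k\neq m\}$, i.e.\ it records which coordinate among $x_1,\dots,x_d$ is \emph{smallest}; by contrast $\sigma_0\cap(-\sigma_j)=\{x\in\sigma_0:x_j=\max(x_0,\dots,x_d)\}$ records which is \emph{largest}. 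These two decompositions of $\sigma_0$ agree only when $d\le 2$. For $d=3$, the star-subdivision cone $\mathrm{cone}(b_0,e_2,e_3)=\{x:0\le x_1\le x_2,\ x_1\le x_3\}$ contains both $(0,1,2)\in-\sigma_3$ and $(0,2,1)\in-\sigma_2$, so it is not contained in a single maximal cone of $-\Sigma_{\Delta_3}$. Dually, $\sigma_0\cap(-\sigma_1)=\{x:0\le x_2,x_3\le x_1\}$ is a cone over a square with the four extreme rays $e_1,\,e_1+e_2,\,e_1+e_3,\,e_1+e_2+e_3$; hence $\mathcal{Q}(\Delta_3,2)$ is not simplicial and $X_{\mathcal{Q}(\Delta_3,2)}$ is not even $\mathbb{Q}$-factorial, let alone the smooth blow-up of $\mathbb{P}^3$ at four points.

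The paper's own proof is terser than yours---it only observes that the star subdivisions insert precisely the rays of $-\Sigma_{\Delta_d}$ and then asserts the identity---so it shares the same gap. In fact $\mathcal{Q}(\Delta_d,2)$ is the normal fan of $\Delta_d+(-\Delta_d)$ (the normal fan of a Minkowski sum being the common refinement of the summands' normal fans), and the corollary as stated holds only for $d\le 2$.
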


\begin{proof}
Recall that given a cone $\sigma$ generated by rays $r_1,\ldots,r_\ell$, the simple blow up of the associated toric variety $X_\sigma$ at the torus fixed point corresponding to $\sigma$ is the toric variety associated to the fan obtained by taking the star subdivision of $\sigma$, which inserts the ray $r_1+\ldots+r_\ell$.

Now consider $P=\Delta_d\subseteq\mathbb{R}^d$ and let $e_1,\ldots,e_d$ be the canonical basis of $\mathbb{R}^d$. The maximal dimensional cones of $\Sigma_{\Delta_d}$ are
\[
\langle e_1,\ldots,e_d\rangle,~\langle e_1,\ldots,\widehat{e}_i,\ldots,e_d,-(e_1+\ldots+e_d)\rangle,~i=1,\ldots,d.
\]
The star subdivision of $\langle e_1,\ldots,e_d\rangle$ inserts the ray $e_1+\ldots+e_d$ and the star subdivision of $\langle e_1,\ldots,\widehat{e}_i,\ldots,e_d,-e_1-\ldots-e_d\rangle$ inserts the ray $-e_i$. Therefore, the common refinement of $\Sigma_{\Delta_d}$ and $-\Sigma_{\Delta_d}$, which equals $\mathcal{Q}(\Delta_d,2)$ by Lemma~\ref{explicit-quotient-fan-for-m=2}, gives us exactly the blow up of $\mathbb{P}^d$ at its $d+1$ torus fixed points.
\end{proof}

\begin{example}
If $P=\Delta_1$, then $Q(P,m)$ is the permutohedron, and $R(P,m)=Q(P,m+1)$. So the simplest new example to consider is $R(\Delta_2,2)$. In what follows, we explicitly list all the maximal cones of the fan $\mathcal{R}(\Delta_2,2)$. Let us fix an isomorphism $(\mathbb{R}^2)^2/\mathbb{R}^2\cong\mathbb{R}^2$ so that $q\colon(\mathbb{R}^2)^2\rightarrow(\mathbb{R}^2)^2/\mathbb{R}^2\cong\mathbb{R}^2$ becomes:
\[
q((x,y),(u,v))=(x-u,y-v)=(a,b).
\]
Then by Lemma~\ref{explicit-quotient-fan-for-m=2} we have that $\mathcal{Q}(\Delta_2,2)$ is given in Figure~\ref{quotient-fan-Delta2-2}. We now consider all possible combinations $q^{-1}(\xi)\cap(\tau_1\times\tau_2)$, $\xi\in\mathcal{Q}(\Delta_2,2)$ and $\tau_1,\tau_2\in\Sigma_{\Delta_2}$, which give rise to maximal dimensional cones, and we compute the generating extremal rays using Macaulay2 \cite{M2,PolyhedraSource}. The final result can be found in Table~\ref{max-cones-fan-R-Delta2-2}. As an example, consider
\[
\xi=\{(a,b)\mid a\geq0,a-b\geq0\},~\tau_1=\{(x,y)\mid x\geq0,y\geq0\},~\tau_2=\{(u,v)\mid u\geq0,v\geq0\}.
\]
Then the cone $q^{-1}(\xi)\cap(\tau_1\times\tau_2)$ is cut out by the following inequalities:
\[
x-u\geq0,~x-y-u+v\geq0,~x\geq0,~y\geq0,~u\geq0,~v\geq0.
\]
This cone has $(0,0,0,1),(0,1,0,1),(1,0,0,0),(1,0,1,0),(1,1,0,0)$ as generators of the extremal rays, which is case $1$ in Table~\ref{max-cones-fan-R-Delta2-2}.
\end{example}

\begin{figure}
\begin{tikzpicture}[scale=0.6]

	\draw[->,line width=1pt] (-3,0) -- (3,0);
	\draw[->,line width=1pt] (0,-3) -- (0,3);
	\draw[line width=1pt] (-2.6,-2.6) -- (2.6,2.6);

	\node at (4,1) {$a\geq0,~a-b\geq0$};
	\node at (3,3.5) {$a\geq0,~-a+b\geq0$};
	\node at (-3,2) {$-a\geq0,~b\geq0$};
	\node at (-4.5,-1) {$-a\geq0,~-a+b\geq0$};
	\node at (-3,-3.5) {$-a\geq0,~a-b\geq0$};
	\node at (3,-2) {$a\geq0,~-b\geq0$};

	\node at (3.3,-0.3) {$a$};
	\node at (-0.3,3.3) {$b$};

\end{tikzpicture}
\caption{The quotient fan $\mathcal{Q}(\Delta_2,2)$.}
\label{quotient-fan-Delta2-2}
\end{figure}
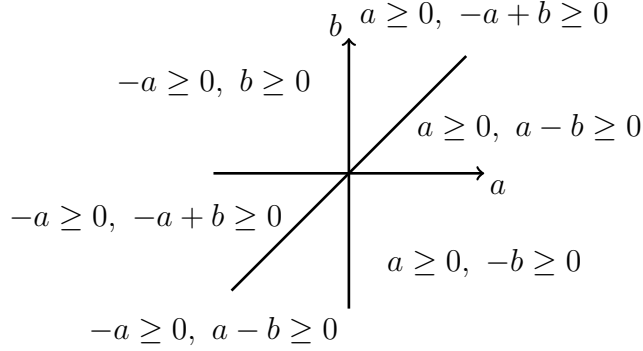

\begin{table}
\centering
\caption{List of maximal cones in $\mathcal{R}(\Delta_2,2)$.}
\label{max-cones-fan-R-Delta2-2}
\begin{tabular}{|>{\centering\arraybackslash}m{0.4cm}|>{\centering\arraybackslash}m{7cm}||>{\centering\arraybackslash}m{0.4cm}|>{\centering\arraybackslash}m{7cm}|}
\hline
$1$ & \texttt{0001,0101,1000,1010,1100}
&
$19$ & \texttt{-1-1-1-1,-1000,000-1,0010,0100}
\\
\hline
$2$ & \texttt{00-1-1,0001,0101,1000,1100}
&
$20$ & \texttt{0010,0011,0100,0101,1010}
\\
\hline
$3$ & \texttt{00-1-1,1000,1010,1100}
&
$21$ & \texttt{-1-100,0010,0011,1010}
\\
\hline
$4$ & \texttt{0-100,0001,1000,1010}
&
$22$ & \texttt{-1-1-1-1,-1-100,000-1,0010,1010}
\\
\hline
$5$ & \texttt{-1-1-1-1,0-100,00-1-1,0001,1000}
&
$23$ & \texttt{-1-100,0010,0011,0100,0101}
\\
\hline
$6$ & \texttt{-1-1-1-1,0-100,00-1-1,1000,1010}
&
$24$ & \texttt{-1-1-1-1,-1-100,0100,0101}
\\
\hline
$7$ & \texttt{-1-1-1-1,00-1-1,0001,0101}
&
$25$ & \texttt{-1-1-1-1,-1-100,000-1,0010,0100}
\\
\hline
$8$ & \texttt{0100,0101,1010,1100}
&
$26$ & \texttt{0001,0011,0101,1010}
\\
\hline
$9$ & \texttt{00-1-1,0100,0101,1100}
&
$27$ & \texttt{-1-100,0-100,0001,0011,1010}
\\
\hline
$10$ & \texttt{00-1-1,000-1,0100,1010,1100}
&
$28$ & \texttt{-1-1-1-1,-1-100,0-100,0001}
\\
\hline
$11$ & \texttt{-1-1-1-1,00-1-1,000-1,1010}
&
$29$ & \texttt{-1-1-1-1,-1-100,0-100,1010}
\\
\hline
$12$ & \texttt{-1-1-1-1,00-1-1,0100,0101}
&
$30$ & \texttt{-1-100,0001,0011,0101}
\\
\hline
$13$ & \texttt{-1-1-1-1,00-1-1,000-1,0100}
&
$31$ & \texttt{-1-1-1-1,-1-100,0001,0101}
\\
\hline
$14$ & \texttt{0010,0100,0101,1010}
&
$32$ & \texttt{0001,0101,1000,1010}
\\
\hline
$15$ & \texttt{000-1,0010,0100,1010}
&
$33$ & \texttt{00-10,0001,0101,1000}
\\
\hline
$16$ & \texttt{-1-1-1-1,000-1,0010,1010}
&
$34$ & \texttt{-1-1-1-1,0-100,00-10,0001,1000}
\\
\hline
$17$ & \texttt{-1000,0010,0100,0101}
&
$35$ & \texttt{-1-1-1-1,0-100,1000,1010}
\\
\hline
$18$ & \texttt{-1-1-1-1,-1000,0100,0101}
&
$36$ & \texttt{-1-1-1-1,00-10,0001,0101}
\\
\hline
\end{tabular}
\end{table}

\begin{remark}
\label{in-general-not-Q-factorial}
From Table~\ref{max-cones-fan-R-Delta2-2} we can see that $\mathcal{R}(\Delta_2,2)$ has $24$ simplicial maximal cones and the remaining $12$ are non-simplicial as they have $5$ extremal rays. In particular, $X_{\mathcal{R}(\Delta_2,2)}$ is not $\mathbb{Q}$-factorial.
\end{remark}

\begin{remark}
\label{non-recursive-family-in-general}
As we discussed in \S\,\ref{family-losev-manin-space-revisited}, the universal family over the Losev--Manin moduli space $\overline{\mathrm{LM}}_{m+2}$ is given by the next moduli space $\overline{\mathrm{LM}}_{m+3}$, together with the forgetful morphism $\overline{\mathrm{LM}}_{m+3}\rightarrow\overline{\mathrm{LM}}_{m+2}$ which drops one of the light points. We checked in Proposition~\ref{thm-losev-manin-family-revisited} that the fan $\mathcal{R}(\Delta_1,m)$ is isomorphic to $\mathcal{Q}(\Delta_1,m+1)$ for all $m$. However, it is in general not true that $\mathcal{R}(P,m)$ is isomorphic to $\mathcal{Q}(P,m+1)$.

For example, let $P$ be the $2$-simplex $\Delta_2$ and take $m=2$. We have that $\mathcal{R}(\Delta_2,2)$ and $\mathcal{Q}(\Delta_2,3)$ have a different number of maximal dimensional cones. The number of maximal cones of $\mathcal{R}(\Delta_2,2)$ is $36$ by Table~\ref{max-cones-fan-R-Delta2-2}. On the other hand, by \cite[\S\,3.1]{San05}, we have that $\mathcal{Q}(\Delta_2,3)$ has $108$ maximal dimensional cones. These correspond to the vertices of the fiber polytope $Q(\Delta_2,3)$, which correspond to the polyhedral subdivisions of $3\Delta_2$ induced by the polytope fibration $\Delta_2^3\rightarrow3\Delta_2$ (see \cite{BS92}).
\end{remark}



\end{document}